\begin{document}

	\newtheorem{theorem}{Theorem}
	\newtheorem{definition}[theorem]{Definition}
	\newtheorem{question}[theorem]{Question}
	\newtheorem{conjecture}[theorem]{Conjecture}
	\newtheorem{remark}[theorem]{Remark}
	
	\newtheorem{lemma}[theorem]{Lemma}
	\newtheorem{claim}[theorem]{Claim}
	\newtheorem*{claim*}{Claim}
	\newtheorem{corollary}[theorem]{Corollary}
	\newtheorem{problem}[theorem]{Problem}
	\newtheorem{fact}[theorem]{Fact}
	\newtheorem{observation}[theorem]{Observation}
	\newtheorem{prop}[theorem]{Proposition}

	\theoremstyle{remark}
	\newtheorem{example}[theorem]{Example}
	
	\def\eps{\varepsilon}
	\def\HH{\mathcal{H}}
	\def\EE{\mathbb{E}}
	\def\C{\mathbb{C}}
	\def\R{\mathbb{R}}
	\def\Z{\mathbb{Z}}
	\def\PP{\mathbb{P}}
	\def\l{\lambda}
	\def\s{\sigma}
	\def\t{\theta}
	\def\z{\zeta}
	\def\endproof{{\hfill $\square$} }
	\def\Xt{\widetilde{X}}
	\def\Pt{\widetilde{P}}
	\def\Var{\mathrm{Var}}
	\def\PV{\mathrm{PV}}
	\def\NV{\mathrm{NV}}
	\def\N{\mathcal{N}}
	\def\jj{\mathbf{j}}
	
	\def\tm{\tilde{\mu}}
	\def\ts{\tilde{\sigma}}
	\newcommand\marginal[1]{\marginpar{\raggedright\parindent=0pt\tiny #1}}
	\def\Pois{\mathrm{Pois}}
	\def\Bin{\mathrm{Bin}}
	\reversemarginpar

	\title{Central limit theorems from the roots of probability generating functions}
	\author{Marcus Michelen\footnote{University of Pennsylvania, Department of Mathematics, David Rittenhouse Lab. 209 South 33rd Street
Philadelphia, PA 19104-6395} \ 
	and 
	Julian Sahasrabudhe\footnote{Instituto Nacional de Matem\'{a}tica Pura e Aplicada (IMPA), 
Estr. Dona Castorina, 110 Jardim Botânico, Rio de Janeiro RJ 22460-320, Brasil \emph{and}
Peterhouse, University of Cambridge, Trumpington Steet, Cambridge, UK, CB2 1RD.}}
	\date{}
	
	\maketitle

\begin{abstract}
For each $n$, let $X_n \in \{0,\ldots,n\}$  be a random variable with mean $\mu_n$, standard deviation $\s_n$, and let
\[ P_n(z) = \sum_{k=0}^n \PP( X_n = k) z^k ,\] be its probability generating function. We show that if none of the complex zeros of the polynomials $\{ P_n(z)\}$ is contained in a neighbourhood of $1 \in \mathbb{C}$ and $\s_n > n^{\eps}$ for some $\eps >0$, then 
$ X_n^* =(X_n - \mu_n)\s^{-1}_n$ is asymptotically normal as $n\rightarrow \infty$: that is tends in distribution to a random variable $Z \sim \mathcal{N}(0,1)$. Moreover, we show this result is sharp in the sense that there exist sequences of random variables $\{X_n\}$ with $\s_n > C\log n $ for which $P_n(z)$ has no roots near $1$ and $X_n^*$ is not asymptotically normal. These results disprove a conjecture of Pemantle and improve upon various results in the literature.  We go on to prove several other results connecting the location of the zeros of $P_n(z)$ and the distribution of the random variable $X_n$.
 
\end{abstract}		
	
	\section{Introduction}		
	
Let $X \in \{0,\ldots,n\}$ be a random variable with mean $\mu_n$ and standard deviation $\s_n$ and let the polynomial $P_X(z)$ be its \emph{probability generating function}
\[ P_X(z) = \sum_{k=0}^n \PP(X = k )z^k.
\] In this paper, we are concerned with the following general question: what can be deduced about the distribution of the random variable $X$ from information on the location of the roots of $P_X(z)$ in the complex plane? For example, suppose that we are given a random variable $X \in \{0,\ldots,n\}$ and know only that the zeros $\z$ of $P_{X}(z)$ are real. With this knowledge at hand, we readily factor $P_X$ as $P_X(z) = c\prod_{i=1}^n(z + \zeta_i)$ (assuming $\PP(X = n)\not=0$ for the moment) and notice that the roots of $P_X$ must be non-positive as the coefficients of $P_X(z)$ are non-negative. Hence, we may write
$P_X(z) = \prod_{i=1}^n (q_iz + 1-q_i)$, for some $q_1,\ldots q_n \in [0,1]$. A little further thought reveals that this special expression for the probability generating function corresponds to an expression of $X$ as a sum of independent random variables $X = X_1 + \cdots + X_n$, where $X_i$ is the $\{0,1\}$-random variable, taking $1$ with probability $q_i$. Thus, with an appropriate central limit theorem at hand, we see that $X$ must be \emph{approximately normal}, provided the variance of $X$ is sufficiently large. In other words, from this one piece of information (albeit a strong piece of information) about the zeros of $P_X(z)$, one can quickly deduce quite a bit of information about the distribution of $X$. The aim of this paper is to show that this assumption of real rootedness of $P_X(z)$ can be related quite considerably while yielding similar results. In particular, we give three different results each of which says that ``if $X$ has large variance and the roots of $P_X(z)$ avoid a region in the complex plane, then $X$ is approximately normal.'' 

\subsection{History}
Before turning to our contributions, we take a brief moment to situate our results in an old and well-studied field centred around the following question: What does information about the coefficients of $P(z)$ tell us about the distribution of the complex roots of $P$ (and vice versa)? 
This question has a long and rich history, reaching back to the seminal work of Littlewood, Szeg\H{o}, P\'{o}lya, and perhaps even Cauchy, due to his 1829 proof \cite{cauchy1828exercises} of the fundamental theorem of algebra, which gives explicit bounds on the magnitude of the complex roots (see \cite[Theorem 1.2.1]{borwein-erdelyi} for a modern treatment of this proof). 

One line of research, initiated by the 1938 - 1943 work of Littlewood and Offord \cite{LittlewoodOfford,LittlewoodOfford-II,LittlewoodOfford-III}, concerns the typical distribution of roots of random polynomials. 
For example Kac \cite{kac1943} gave an exact integral formula for the number of real roots of random polynomial, with coefficients sampled independently from a normal distribution. Later, Erd\H{o}s and Offord \cite{ErdosOfford} showed that as $n\rightarrow \infty$ almost all polynomials of the form $\sum_{i=0}^n \eps_i x_i$, where $\eps_1,\ldots,\eps_n \in \{0,+1,-1\}$, have $(2/\pi + o_n(1))\log n$ real roots. Since these results, numerous other settings have been explored \cite[Example I.2]{kac-book}, including varied models as well as extensions to the roots of random power series \cite{offord,peres-virag}. 

Deterministic results have also received considerable attention. To name a few, Bloch and P\'{o}lya \cite{BlochPolya} studied the maximum number of roots of a polynomial with coefficients in $\{0,-1,+1\}$ and degree $n$. After improvements by Schur \cite{Schur} and Szeg\H{o} \cite{Szego}, this line culminated in the remarkable and celebrated result of Erd\H{o}s and Tur\'{a}n \cite{erdos-turan} from 1950: if a polynomial $P(z) = \sum_{k=0}^n a_kz^k$ has sufficiently ``flat'' coefficients, meaning $(|a_0||a_n|)^{-1/2}\sum_{k} |a_k| = e^{o(n)}$, then the roots of $P(z)$ are approximately ``radially equidistributed'', in the sense that for every $0 < \alpha < \beta < 2\pi$, the number of roots $\z = re^{i\phi}$ with $\alpha \leq \phi \leq \beta$ is $(\beta -\alpha)n/(2\pi) + o(n)$. Further refinements have been obtained by Ganelius \cite{Ganelius}, Mignotte \cite{Mignotte} and more recently, by Erd\'{e}lyi \cite{Erdelyi} and Soundararajan \cite{Sound}. Also, Bilu \cite{Bilu} obtained a beautiful variant of the Erd\H{o}s-Tur\'{a}n theorem for higher dimensions and with more algebraic restrictions (also see Granville \cite{AGran} for some discussion).

In another direction, Odlyzko and Poonen \cite{OP} have studied the geometric properties of the set of points in the complex plane that are the zero of
\emph{some} polynomial with coefficents in $\{0,1\}$; they show that the closure of this set is path connected, and that it appears to exhibit a certain fractal-like structure. Beaucoup, Borwein, Boyd and Pinner \cite{BorweinPowerSeries}, in a similar vein, have studied the multiple real roots of power-series with restricted coefficients.

Interestingly, and most relevant to our work here, the roots of polynomials with non-negative coefficients are also known to have several particular properties \cite{nonNegFactor,eremenko}. To take an example in the flavour of Erd\H{o}s and Tur\'{a}n, an old observation of Obrechkoff \cite{obrechkoff} says that if $f$ is a degree $n$ polynomial with non-negative coefficients and $\alpha \in [0,\pi]$, then the number of zeros $\z = re^{\phi}$ with $- \alpha < \phi < \alpha $ is \emph{at most} $2\alpha n/\pi$; that is, ``at most twice as much as the equidistributed case.''    

\subsection{Results}

In the present work we take a slightly different perspective from many of the results above; instead of assuming ``small scale'' information about the coefficients (like assuming that they takes values in $\{0,1\}$, say), we look to connect ``large scale'' distributional information of the coefficients with the distribution of the roots. Indeed, we restrict our attention to polynomials with non-negative coeffiecents and think of the polynomial as a probability generating function of a random variable. 

In this line, Hwang and Zacharovas \cite{hwang-zacharovas} showed that if a sequence of random variables $\{X_n\}$ is such that all of the zeros of $\{P_n\}$ lie on the unit circle and $\deg(P_n) \rightarrow \infty$, then the limiting distribution of $X^*_n$ is completely determined by its fourth centralized moments $\EE( X_n -\mu_n)^4$. As a consequence, they gave a simple criterion for $\{X_n^*\}$ to be asymptotically normal. 

Later, Lebowitz, Pittel, Ruelle and Speer (Henceforth LPRS) \cite{LPRS} studied a looser restriction on the roots that guarantees a central limit theorem for $X_n$ with large variance. They showed that if $\s_n n^{-1/3} \rightarrow \infty$, and none of the roots of $\{P_n(z)\}$ is contained in a neighbourhood of $1 \in \mathbb{C}$, then $X_n$ satisfies a central limit theorem. This lead Pemantle \cite{pemantle} to conjecture that a similar result holds with a weaker criterion on the variance. Namely, he conjectured that $\s_n \rightarrow \infty$ is sufficient to guarantee a central limit theorem
if all the roots of $\{P_n(z)\}$ avoid a neighbourhood of $1$. We show that this conjecture is false, however the condition on the variance in the Theorem of LPRS can be considerably improved, as anticipated by Pemantle.

	\begin{theorem} \label{thm:largeVarThm} Let $\eps > 0$ and, for each $n$, let $X_n \in \{0,\ldots,n\}$ be a random variable with standard deviation $\s_n > n^{\eps}$.
		If all the roots $\zeta$ of $P_n$ satisfy $|1 - \zeta| > 1/\sigma_n^{1-\eps}$ then the sequence $\{X_n\}$ satisfies a central limit theorem.

	\end{theorem}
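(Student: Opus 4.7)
The plan is to invoke L\'evy's continuity theorem and establish pointwise convergence $\phi_n(t) := \mathbb{E}[e^{it X_n^*}] = e^{-it\mu_n/\sigma_n} P_n(e^{it/\sigma_n}) \longrightarrow e^{-t^2/2}$ for each fixed $t \in \mathbb{R}$. Since $P_n(1)=1$ and $P_n$ has no zeros in $D(1, \sigma_n^{-(1-\varepsilon)})$, I would use the factorisation $P_n(z) = \prod_{j=1}^n (1 + (z-1)/p_j)$ with $p_j = 1 - \zeta_j$, taking the unique branch of the logarithm with $\log P_n(1) = 0$ on this disk, to obtain
\[
\log \phi_n(t) \;=\; -\frac{it\mu_n}{\sigma_n} + \sum_{j=1}^n \log\!\Big(1 + \frac{e^{it/\sigma_n}-1}{p_j}\Big).
\]
By hypothesis $|p_j|^{-1} \leq \sigma_n^{1-\varepsilon}$, so each argument of the logarithm has modulus $O(|t|\sigma_n^{-\varepsilon}) \to 0$, uniformly in $j$, for fixed $t$.

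The second step is to Taylor-expand each $\log(1 + w_j)$ through second order and sum to get the quantities $S_k = \sum_j p_j^{-k}$. The logarithmic-derivative identities for $\log P_n$ at $z = 1$ give $S_1 = \mu_n$ and $S_2 = \mu_n - \sigma_n^2$. Combining with the Taylor expansion $e^{it/\sigma_n}-1 = it/\sigma_n - t^2/(2\sigma_n^2) + O(\sigma_n^{-3})$, the $S_1$-contribution cancels the term $-it\mu_n/\sigma_n$, and the $S_2$-contribution produces $\tfrac{1}{2}(\sigma_n^2 - \mu_n)(-t^2/\sigma_n^2) + O(\sigma_n^{-1})$; the spurious $t^2\mu_n/(2\sigma_n^2)$-terms cancel exactly, leaving the main term $-t^2/2$. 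The theorem thereby reduces to showing that the aggregated remainder
\[
\mathcal{E}_n(t) \;=\; \sum_{k \geq 3} \frac{(-1)^{k+1}}{k} \bigl(e^{it/\sigma_n} - 1\bigr)^k S_k \;+\; \text{(subleading Taylor corrections from the $S_1,S_2$ terms)}
\]
tends to zero for each fixed $t$.

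The main technical obstacle is producing sharp bounds on $|S_k|$ for $k \geq 3$, since the crude estimate $|S_k| \leq n \sigma_n^{k(1-\varepsilon)}$ is far too weak. I would apply Cauchy estimates to the centred function $h(u) = \log P_n(e^u) - u\mu_n$, which is analytic on the zero-free $u$-disk of radius $R \asymp \sigma_n^{-(1-\varepsilon)}$ and vanishes to second order at $0$. A Borel--Carath\'eodory inequality converts a one-sided bound on $\operatorname{Re} h$ into a modulus bound; here $\operatorname{Re} h(u) \leq \log P_n(e^{\operatorname{Re} u}) - (\operatorname{Re} u)\mu_n$ must be controlled by $O(\sigma_n^{2\varepsilon})$, which requires a variance-based MGF estimate that itself uses the zero-free hypothesis rather than only the trivial Hoeffding bound. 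With $\max_{|u| \leq R/2} |h(u)| = O(\sigma_n^{2\varepsilon})$ in hand, Cauchy gives $|\kappa_k(X_n)/\sigma_n^k| = O(\sigma_n^{-\varepsilon(k-2)})$ uniformly in $k \geq 3$, so summing the geometric tail yields $\mathcal{E}_n(t) = O(|t|^3\sigma_n^{-\varepsilon}) \to 0$. The most delicate step will be this \emph{self-referential MGF bound}: for a general bounded random variable Hoeffding yields only $O(r^2 n^2)$ at $r \sim \sigma_n^{-(1-\varepsilon)}$, which is the wrong order of magnitude, so the improved estimate must be extracted directly from the factorisation $P_n(z) = \prod_j(1+(z-1)/p_j)$ and the global constraints $\sum_j p_j^{-1} = \mu_n$ and $\sum_j p_j^{-2} = \mu_n - \sigma_n^2$, possibly augmented by the Obrechkoff angular restriction recalled in the introduction.
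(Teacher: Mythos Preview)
Your first two steps are fine and indeed parallel the paper's setup: both approaches start from the factorisation of $P_n$ over its roots, take a logarithm in the zero-free disk around $1$, and expand in cumulants. The expressions $S_1=\mu_n$ and $S_2=\mu_n-\sigma_n^2$ are correct, and the cancellation you describe does leave $-t^2/2$ as the leading term.

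The genuine gap is exactly where you flag it: the bound $\max_{|u|\le R/2}|h(u)|=O(\sigma_n^{2\varepsilon})$, equivalently $\log P_n(e^r)-r\mu_n=O(\sigma_n^{2\varepsilon})$ for $|r|\asymp\sigma_n^{-(1-\varepsilon)}$, is asserted but not proved. You correctly observe that Hoeffding gives only $O(r^2n^2)$, which with $n\le\sigma_n^{1/\varepsilon}$ is hopelessly large, but your proposed remedy (``extract it from the factorisation, possibly augmented by Obrechkoff'') is not a proof. In fact the factorisation does not obviously help: on the disk $|u|\sim\sigma_n^{-(1-\varepsilon)}$ each term $(e^u-1)/p_j$ has modulus of order~$1$, so the per-root contributions to $h$ are $O(1)$ and summing $n$ of them gives $O(n)$, not $O(\sigma_n^{2\varepsilon})$. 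The constraint $\sum_j p_j^{-2}=\mu_n-\sigma_n^2$ involves complex $p_j$ and gives no control on $\sum_j|p_j|^{-2}$; Obrechkoff's sector bound does not obviously convert into the moment-generating estimate you need. Without this bound the Borel--Carath\'eodory step collapses, and with it the claimed uniform estimate $|\kappa_k|/\sigma_n^k=O(\sigma_n^{-\varepsilon(k-2)})$.

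The paper circumvents this obstacle by \emph{not} attempting to show $\kappa_k/\sigma_n^k\to 0$ for all $k\ge 3$ directly. Instead it rescales by $b_n:=\max_{2\le k\le\lceil 1/\varepsilon\rceil}|\kappa_k|^{1/k}\ge\sigma_n$, truncates the cumulant series at $M\sim 1/\varepsilon^2$, and bounds the tail $\sum_{m>M}\kappa_m(i\theta/b_n)^m/m!$ by summing an explicit per-root inequality (proved via the identity $\sum_{k\ge1}k^{m-1}z^k=\sum_j S(m-1,j)j!\,z^j(1-z)^{-j-1}$ with Stirling numbers of the second kind). This yields $|g_n(\theta)|\le n\,\delta^{-1}|\theta e/(\delta b_n)|^{M+1}$, and since $\delta b_n\ge\sigma_n^{\varepsilon}$ and $n\le\sigma_n^{1/\varepsilon}$, the exponent arithmetic with $M\sim 1/\varepsilon^2$ forces $g_n\to 0$. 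One is then left with $\phi_n(\theta/b_n)e^{-i\mu_n\theta/b_n}=\exp(Q_n(\theta)+o(1))$ for a bounded-degree polynomial $Q_n$; Marcinkiewicz's theorem forces any subsequential limit $Q$ to be quadratic, and an elementary argument then shows $b_n/\sigma_n\to\alpha>0$ and the limit is $e^{-\theta^2/2}$. The key conceptual point you are missing is that allowing a few intermediate cumulants (indices $3,\dots,\lceil 1/\varepsilon\rceil$) to be potentially large relative to $\sigma_n$, and letting Marcinkiewicz rule this out \emph{a posteriori}, is what makes the argument close.
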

On the other hand, we show that for every $\delta >0$ there exists $\{X_n\}$ with $\s_n > C\log n$, for which $X_n^* \not\to \mathcal{N}(0,1)$ in distribution and $|\z - 1| > 1$ for all roots $\z$ of the polynomials $\{P_n(z)\}$. The obvious question that arises is: what is the correct variance condition in Theorem~\ref{thm:largeVarThm}? It is perhaps reasonable to put forward the following modified version of Pemantle's original conjecture. 

\begin{conjecture} Let $\delta >0$ and, for each $n$, let $X_n \in \{0,\ldots,n\}$ be a random variable with variance $\s_n$. If all the roots $\z$ of $P_n(z)$ satisfy $|1-\z| > \delta$ and $\s_n/(\log n)^c \rightarrow \infty$ for every $c > 0$ then $\{X_n\}$ satisfies a central limit theorem. \end{conjecture}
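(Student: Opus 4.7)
The plan is to show pointwise convergence of $\phi_n^*(t) := \EE[e^{itX_n^*}] = e^{-it\mu_n/\sigma_n}P_n(e^{it/\sigma_n}) \to e^{-t^2/2}$ for each fixed $t \in \R$, which gives the CLT by L\'evy continuity. Starting from the factorisation $P_n(z) = \prod_j(1-(z-1)/\xi_j)$ with $\xi_j := \z_j - 1$, the hypothesis $|\xi_j|>\delta$ makes the Taylor series $\log P_n(1+w) = -\sum_{k\geq 1} w^k S_k/k$ converge on $|w|<\delta$, where $S_k := \sum_j \xi_j^{-k}$. Evaluating at $u := e^{it/\sigma_n}-1$, and using the identities $S_1 = -\mu_n$, $S_2 = \mu_n - \sigma_n^2$, and $\log(1+u) = it/\sigma_n$, one obtains
\[
\log\phi_n^*(t) \;=\; \frac{u^2\sigma_n^2}{2} \;+\; \sum_{k\geq 3}\frac{T_k u^k}{k}, \qquad T_k := (-1)^k\mu_n - S_k.
\]
The leading term contributes $-t^2/2 + O(t^3/\sigma_n)$, so the problem reduces to showing the $k\geq 3$ sum is $o(1)$.

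The tail $k\geq K := \lceil C\log n/\log\log n\rceil$ is easy: the crude bound $|T_k|\leq \mu_n + n\delta^{-k}$ together with $|u|\leq 2|t|/\sigma_n$ gives a geometric sum of order $O(n(2|t|/(\delta\sigma_n))^K)$, which is $o(1)$ once $C$ is large enough, by the hypothesis $\sigma_n/(\log n)^c \to \infty$ for every $c$. The genuine difficulty is the finitely many low-$k$ terms, most acutely $k=3$: the trivial bound $|T_3|=O(n)$ only yields $\sigma_n \gg n^{1/3}$, the LPRS threshold. The plan for those terms is to exploit the probability-generating-function structure, namely $|P_n(z)|\leq 1$ on $|z|\leq 1$ and hence $\mathrm{Re}\log P_n(z)\leq 0$ there. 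Using the Cauchy integral $S_k = -\tfrac{k!}{2\pi i}\oint_{|w|=r}\log P_n(1+w)\,w^{-k-1}\,dw$ with radius $r$ just below $\delta$, one splits the contour into the long arc inside the closed unit disk (where the integrand has non-positive real part) and the short arc outside (of length $O(r^2)$ for small $r$), so that a crude pointwise estimate on $\log P_n$ is paid only over a small portion of the circle; a dyadic decomposition of the roots by $|\xi_j|$ combined with Obrechkoff-type angular-sector bounds should tighten this further.

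The central obstacle is to parlay the quantitative gap between the counterexample threshold $\sigma_n \sim C\log n$ and the conjectured threshold $\sigma_n \gg (\log n)^c$ for every $c$ into a bound of the form $|T_k| = o(\sigma_n^k)$ for each fixed $k \geq 3$. Note that $T_3 = (\kappa_3 - 3\sigma_n^2)/2$, where $\kappa_3$ is the third cumulant of $X_n$, so any successful argument must genuinely detect cancellation in the combination $(-1)^k\mu_n - S_k$ rather than bounding $\mu_n$ and $|S_k|$ separately. The hard part, in our view, is producing a structural estimate on the zeros of probability generating functions that is strictly sharper than $|S_k|\leq n\delta^{-k}$ and that uses $\sigma_n \gg \mathrm{polylog}(n)$ in an essential way; we suspect this will require new ideas beyond those in Theorem~\ref{thm:largeVarThm}, possibly a PGF-adapted refinement of the Erd\H{o}s--Tur\'an theorem that quantifies how tightly roots can cluster near $1$ as a function of the variance.
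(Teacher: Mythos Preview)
The statement you are attempting to prove is labelled as a \emph{Conjecture} in the paper, not a theorem: the authors explicitly put it forward as an open problem and provide no proof of it anywhere in the paper. So there is no ``paper's own proof'' to compare against; the correct benchmark is whether your argument actually closes the gap between Theorem~\ref{thm:largeVarThm} ($\sigma_n > n^{\eps}$) and the conjectured polylogarithmic threshold.

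Your set-up is clean and correct: the factorisation $P_n(1+w)=\prod_j(1-w/\xi_j)$, the identities $S_1=-\mu_n$, $S_2=\mu_n-\sigma_n^2$, and the resulting expansion $\log\phi_n^*(t)=\tfrac{1}{2}\sigma_n^2 u^2+\sum_{k\ge 3}T_k u^k/k$ are all right, and your tail estimate for $k\geq K\asymp \log n/\log\log n$ is fine. But the proposal is not a proof, and you say as much yourself: the entire argument hinges on showing $|T_k|=o(\sigma_n^k)$ for each fixed $k\ge 3$ (equivalently, that the $k$th cumulant is $o(\sigma_n^k)$), and you offer only a heuristic contour-splitting idea together with the candid admission that ``this will require new ideas beyond those in Theorem~\ref{thm:largeVarThm}.'' That is precisely the open content of the conjecture. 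The contour argument you sketch---bounding $S_k$ via $\oint \log P_n(1+w)\,w^{-k-1}\,dw$ and using $|P_n|\le 1$ on the closed unit disk---does not obviously gain anything: on the portion of $|w|=r$ inside $\{|1+w|\le 1\}$ you only know $\mathrm{Re}\log P_n\le 0$, which controls neither the imaginary part nor the size of the integral, and on the short arc outside the disk the crude bound $|\log P_n|=O(n)$ already costs you a factor of $n$, landing you back at $|S_k|=O(n\delta^{-k})$.

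In short: your diagnosis of where the difficulty lies is accurate and matches the paper's own assessment (see the remarks after Theorem~\ref{thm:largeVarThm} and the discussion of the $\sigma_n\gtrsim \log n$ counterexample), but nothing in the proposal constitutes a proof. The conjecture remains open.
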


We note in passing that Theorem~\ref{thm:largeVarThm} also implies an improvement on the work of Ghosh, Liggett and Pemantle \cite{GLP} who considered a similar situation for vector-valued random variables. Let $\{Y_n\}$ be a sequence of random variables taking values in $\{0,\ldots,n\}^d$ and let $P_n(z_1,\ldots,z_d) \in \mathbb{R}[z_1,\ldots,z_d]$ be the corresponding probability generating functions 

\[ P_{n}(z_1,\ldots,z_d) = \sum_{0 \leq j_1,\ldots,j_d \leq n} \PP(Y_n = (j_1,\ldots,j_d))z_1^{j_1} \cdots z_{d}^{j_d}.\]

They showed that if the polynomials $P_n$ are ``real stable'' then $Y_n^*$ tends to a \emph{multivariate} normal, provided the variance grows sufficiently quickly. 
Here, a polynomial $P(x) \in \mathbb{R}[x_1,\ldots,x_d]$ is said to be \emph{real stable} if each of its roots $\z = (\z_1,\ldots,\z_d)$ has at least one coordinate $\z_i$ with imaginary part $\Im(\z_i) \leq 0$. This class of polynomials, admittedly a little strange at first blush, arise naturally in many situations \cite{pemantle-survey}; indeed, the corresponding random variables can be thought of as vast generalizations of determinantal measures (see the discussion in \cite{GLP} or \cite[Theorem 2]{soshnikov} together with \cite[Proposition 3.2]{bbl}). Our Theorem~\ref{thm:largeVarThm} implies an improvement on the variance condition in the theorem of Ghosh, Liggett and Pemantle theorem, and partially answers a question of theirs \cite{GLP}. 

\begin{corollary} \label{cor:GLP} Let $\eps >0$ and for each $n$, let $Y_n = (Y_n^{(1)},\ldots,Y_n^{(k)}) \in \{1,\ldots,n\}^k$ be a random variable with covariance matrix $A_n$ and real stable probability generating function $P_n$. If there exists a sequence of real numbers with $s_n > n^{\eps}$ and a $k \times k$ matrix $A$ for which
$ A_n s_n^{-2} \rightarrow A $ then 
\[ \frac{Y_n - \EE Y_n }{s_n} \rightarrow \mathcal{N}(0,A). \]
\end{corollary}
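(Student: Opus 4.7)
The plan is to apply Theorem~\ref{thm:largeVarThm} to positive integer combinations of the coordinates of $Y_n$ and then invoke the Cram\'{e}r--Wold device; the final extension to arbitrary directions will require a short moment argument. Write $V_n := (Y_n - \EE Y_n)/s_n$ and note that $\EE\|V_n\|^2 = \mathrm{tr}(A_n)/s_n^2 \to \mathrm{tr}(A)$, so $\{V_n\}$ is tight. It therefore suffices to identify every weak subsequential limit $V^*$ as $\N(0,A)$.

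For $\alpha \in \Z_{>0}^k$ define the integer-valued random variable $Z_n^\alpha := \langle\alpha, Y_n\rangle$, which takes values in $\{0, 1, \ldots, N_\alpha\}$ with $N_\alpha \leq kn\max_j\alpha_j$; its probability generating function is
\[
Q_\alpha(z) = P_n(z^{\alpha_1}, \ldots, z^{\alpha_k}).
\]
I would first show the roots of $Q_\alpha$ avoid a fixed neighborhood of $1$. Real roots are non-positive (since $Q_\alpha$ has non-negative coefficients and is strictly positive on $(0,\infty)$), hence at distance at least $1$ from $1$. For a non-real root $w = re^{i\theta}$ with $\theta \in (0,\pi)$, $(w^{\alpha_1}, \ldots, w^{\alpha_k})$ is a root of $P_n$ and $\Im(w^{\alpha_j}) = r^{\alpha_j}\sin(\alpha_j \theta)$; when $|w-1| < \sin(\pi/\max_j\alpha_j)$, elementary geometry gives $\theta < \pi/\max_j\alpha_j$, so every $\sin(\alpha_j \theta) > 0$ and real stability of $P_n$ is violated. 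Conjugation handles the lower half plane. Thus every root $\zeta$ of $Q_\alpha$ satisfies $|1 - \zeta| \geq \delta(\alpha)$ for some $\delta(\alpha) > 0$ independent of $n$. Combined with $\sigma(Z_n^\alpha) = \sqrt{\alpha^T A_n \alpha} \sim s_n\sqrt{\alpha^T A \alpha}$, which dominates $N_\alpha^{\eps/2}$ for large $n$ whenever $\alpha^T A \alpha > 0$, Theorem~\ref{thm:largeVarThm} yields $\langle\alpha, V_n\rangle \to \N(0,\alpha^T A\alpha)$ in distribution; the degenerate case $\alpha^T A \alpha = 0$ follows from Chebyshev. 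Clearing denominators extends the CLT to every $\alpha \in \mathbb{Q}_{>0}^k$.

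The remaining obstacle is that real stability only controls positive specializations, so the Cram\'{e}r--Wold argument does not immediately reach mixed-sign $\alpha$. Let $V^*$ be any weak subsequential limit of $V_n$. Since $\langle\alpha, V_n\rangle \to \langle\alpha, V^*\rangle$ for every $\alpha$, we obtain $\alpha^T V^* \sim \N(0,\alpha^T A \alpha)$ for each $\alpha \in \mathbb{Q}_{>0}^k$, and by continuity of $\alpha \mapsto \alpha^T V^*$ this extends to every $\alpha \in \R_{\geq 0}^k$. In particular each coordinate $V^{*(j)}$ is Gaussian, so all mixed moments $\EE[V^{*(j_1)}\cdots V^{*(j_m)}]$ are finite by H\"{o}lder's inequality. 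For each fixed $m$ the expansion
\[
\EE\bigl[(\alpha^T V^*)^m\bigr] = \sum_{j_1,\ldots,j_m} \alpha_{j_1}\cdots\alpha_{j_m}\,\EE[V^{*(j_1)}\cdots V^{*(j_m)}]
\]
is a polynomial of degree $m$ in $\alpha$, and on the open cone $\R_{>0}^k$ it coincides with the Gaussian moment $c_m(\alpha^T A\alpha)^{m/2}$ (zero for odd $m$). Two polynomials agreeing on a non-empty open set are equal, so the identity holds for every $\alpha \in \R^k$. Hence $\alpha^T V^*$ has all the moments of $\N(0,\alpha^T A\alpha)$; since the Gaussian is determined by its moments, $\alpha^T V^* \sim \N(0,\alpha^T A\alpha)$ for every $\alpha$. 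The Cram\'{e}r--Wold device now identifies $V^* \sim \N(0,A)$, and as every subsequential weak limit coincides with $\N(0,A)$, we conclude $V_n \to \N(0,A)$ in distribution.
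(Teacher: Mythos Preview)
Your proof is correct and takes a genuinely different route from the paper's. The paper handles all $a\in\mathbb{Q}^k$ in one stroke: it asserts, citing \cite[Lemma~2.2]{GLP}, that for every rational direction $a$ the specialization $P_n(z^{a_1},\dots,z^{a_k})$ is zero-free in a neighbourhood of $1$, and then applies Theorem~\ref{thm:largeVarThm} directly, finishing with Cram\'er--Wold over $\mathbb{Q}^k$. You instead give an explicit, self-contained zero-free argument that only works for $\alpha\in\Z_{>0}^k$ (since real stability, as stated, only forbids zeros with \emph{all} coordinates in the upper half plane), and then compensate with the subsequential-limit and moment-polynomial identity argument to extend from the positive orthant to all of $\R^k$. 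Your approach buys independence from the GLP black box and makes transparent exactly where real stability enters; the paper's approach is considerably shorter but leans on the external lemma to handle mixed-sign directions. Both are valid; your moment-extension trick (two polynomials in $\alpha$ agreeing on $\R_{>0}^k$ must agree everywhere) is a clean way around the asymmetry of the stability hypothesis.
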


Our next result says that if all the roots $\z$ of $P_n(z)$ grow polynomially, that is satisfy $|\z| > n^{\delta}$ for some fixed $\delta >0$ as 
$\sigma_n \rightarrow \infty$, then $\{X_n\}$ satisfies a central limit theorem. Actually the proof of this result naturally provides a slightly stronger result.

	\begin{theorem} \label{thm:TlToZero} Let $k > 0$ and, for each $n$, let $X_n \in \{0,\ldots,n\}$ be a random variable with standard deviation $\s_n$. Let $\Lambda_n$ be the set of roots of the probability generating function $P_n$ of $X_n$. 
If $\s_n \rightarrow \infty$ and \[ \sum_{\z \in \Lambda_n} |\z|^{-k} \rightarrow 0 
	\] as $n \rightarrow \infty$ then $\{X_n\}$ satisfies a central limit theorem.
	\end{theorem}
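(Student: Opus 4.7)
The plan is to prove pointwise convergence of characteristic functions, $\phi_n(t) := \EE e^{itX_n^*} \to e^{-t^2/2}$, or equivalently to show that $\kappa_r(X_n)/\sigma_n^r \to 0$ for every fixed $r \geq 3$. First observe that $\sum_{\zeta \in \Lambda_n} |\zeta|^{-k} \to 0$ forces $R_n := \min_\zeta |\zeta| \to \infty$, since $\#\{\zeta : |\zeta| \leq M\} \leq M^k \sum_\zeta |\zeta|^{-k}$ for every $M > 0$. Consequently $\sum_\zeta |\zeta|^{-p} \to 0$ for every $p \geq k$, using $|\zeta|^{-p} \leq R_n^{k-p}|\zeta|^{-k}$.

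Since $R_n \to \infty$ we may factor $P_n(z) = \prod_\zeta [(1-\alpha_\zeta) + \alpha_\zeta z]$ with $\alpha_\zeta := 1/(1-\zeta)$ and $|\alpha_\zeta| \leq 2/|\zeta|$ eventually. Treating each factor formally as a (possibly complex) Bernoulli PGF and invoking additivity of cumulants gives $\kappa_r(X_n) = \sum_\zeta \alpha_\zeta Q_r(\alpha_\zeta)$, where $pQ_r(p)$ is the $r$-th cumulant of $\mathrm{Bern}(p)$. Expanding $Q_r$ as a polynomial in $\alpha_\zeta$ and using $\sum_\zeta \alpha_\zeta = \mu_n$ and $\sum_\zeta \alpha_\zeta^2 = \mu_n - \sigma_n^2$ produces the identity
\[
\kappa_r(X_n) = \sigma_n^2 + \sum_{m=2}^{r} C_{r,m} S_m, \qquad S_m := \sum_\zeta \alpha_\zeta^m,
\]
for constants $C_{r,m}$ depending only on $r$. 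Thus $\kappa_r/\sigma_n^r = \sigma_n^{2-r} + O\big(\sum_{m=2}^r |S_m|/\sigma_n^r\big)$; the leading term vanishes, and for $m \geq \max(2,k)$ the bound $|S_m| \leq 2^m \sum_\zeta |\zeta|^{-m} \to 0$ gives $|S_m|/\sigma_n^r \to 0$.

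The main obstacle is controlling $|S_m|/\sigma_n^r$ for $2 \leq m < k$, which is non-vacuous only when $k > 2$; for such $m$ the sum $\sum_\zeta |\zeta|^{-m}$ need not vanish, so the triangle inequality fails. The plan is to exploit cancellations in the signed sum $S_m$: its reality (via the conjugate-pair symmetry of $\Lambda_n$) together with the Newton-type identity $S_m = (-1)^m \sum_{L \geq m}\binom{L-1}{m-1}T_L$, where $T_L := \sum_\zeta \zeta^{-L}$, and the spectral identity $\sigma_n^2 = -\sum_L L\, T_L$, should yield $|S_m| = O(\sigma_n^2)$ uniformly for $m$ in the finite range $\{2, \ldots, \lceil k \rceil - 1\}$, and hence $|S_m|/\sigma_n^r = O(\sigma_n^{2-r}) \to 0$. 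The delicate point in making this precise is that the per-root contribution to $\sigma_n^2$ can be much smaller than the per-root contribution to $|\alpha_\zeta|^m$ when the root lies near the critical locus where the Bernoulli-variance contribution essentially vanishes; at this step the non-negative-coefficient structure of $P_n$ (via Obrechkoff-type angular constraints) should be invoked to prevent too many roots from accumulating in the problematic region.
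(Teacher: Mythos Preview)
Your setup through the cumulant identity is correct: the reduction to $\kappa_r(X_n)/\sigma_n^r\to 0$ for each fixed $r\geq 3$, the observation that $R_n := \min_\zeta |\zeta| \to \infty$, the factorisation of $P_n$, the formula $\kappa_r = \sum_\zeta \alpha_\zeta Q_r(\alpha_\zeta)$, and the treatment of the terms $S_m$ with $m\geq k$ are all fine. The real work is exactly where you say it is, and that step is not done.

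The claim you need is $|S_m| = O(\sigma_n^2)$ for each fixed $m$ in the range $2\leq m<k$; you propose to extract it from the identities $S_m=(-1)^m\sum_{L\geq m}\binom{L-1}{m-1}T_L$ and $\sigma_n^2=-\sum_L L\,T_L$ together with an ``Obrechkoff-type'' angular bound. But nothing in those two identities prevents the vector $(T_1,\dots,T_\ell)$ from being nearly orthogonal to $(1,2,\dots,\ell)$ while having large projection onto some other $(1^{r-1},\dots,\ell^{r-1})$; in that scenario $\sigma_n^2 = o(\|(T_1,\dots,T_\ell)\|)$ while some $\kappa_r$ is of order $\|(T_1,\dots,T_\ell)\|\gg\sigma_n^2$. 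As for Obrechkoff, that theorem constrains root counts in sectors about the \emph{positive real axis}; the ``critical locus'' where the per-pair contribution $2\,\mathrm{Re}\,\alpha_\zeta\approx -2R^{-1}\cos\theta$ to $\sigma_n^2$ is small relative to $|\alpha_\zeta|^m$ lies near the \emph{imaginary axis}, and there is no evident mechanism by which Obrechkoff rules out accumulation there. In short, you have correctly located the heart of the matter but not resolved it.

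The paper does not attempt to prove $|S_m|=O(\sigma_n^2)$ at all. Instead it passes to a subsequence along which the unit vector $(T_1,\dots,T_\ell)/\|(T_1,\dots,T_\ell)\|$ converges, writes $C_r=\sum_{j\leq\ell}j^{r-1}T_j\approx \|(T_1,\dots,T_\ell)\|\,B(r)$, takes $k_0$ to be the first index with $B(k_0)\neq 0$, and rescales by $b_n := \max_{2\leq j\leq k_0}|C_j|^{1/j}$ rather than by $\sigma_n$. One then gets $\phi_n(\theta/b_n)e^{-i\mu_n\theta/b_n}\to e^{Q(\theta)}$ for a nonzero polynomial $Q$ of degree at most $k_0$. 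The decisive input is Marcinkiewicz's theorem: since the limit is a genuine characteristic function (this is precisely where non-negativity of the coefficients enters, via L\'evy's continuity theorem), $\deg Q\leq 2$, so the limit is Gaussian; only \emph{after} this does one recover $\sigma_n\asymp b_n$ and conclude. This rescaling-plus-Marcinkiewicz manoeuvre is the device that replaces the direct estimate you are missing.
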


Theorem~\ref{thm:TlToZero} is also best possible, in the sense that for every function $\eps(n)\rightarrow 0$ there exists a sequence of random variables $\{X_n\}$, so that $\s_n \rightarrow \infty$, all of the roots $\z$ of $P_n(z)$ satisfy $|\z| > n^{\eps(n)}$, and $X_n^*$ does not tend to a normal in distribution. 

In light of Theorem~\ref{thm:largeVarThm}, it is natural to ask if there is a larger neighbourhood $R$ of $1$ so that if the roots of $P_n(z)$ avoid $R$ 
then  $\{X_n\}$ satisfies a central limit theorem whenever $\s_n \rightarrow \infty$. Our last theorem shows 
that this is true if we choose $R$ to be a neighbourhood of $1$ along with an open set containing the region
 $S= \left\lbrace x + iy : x \geq \frac{2x^2 + 2y^2}{1 + x^2 + y^2}\right\rbrace$, 
as shown in Figure \ref{fig:diagram}. While this region seems to be a strange choice, it is actually quite natural; if the roots of $P_n(z)$ avoid this region then each of the roots of $P_n(z)$ can be thought of as contributing positively to the variance of $X_n$. Moreover $R$ is the largest region for which this is true.
 
	\begin{figure}[h]
		\centering
		\includegraphics{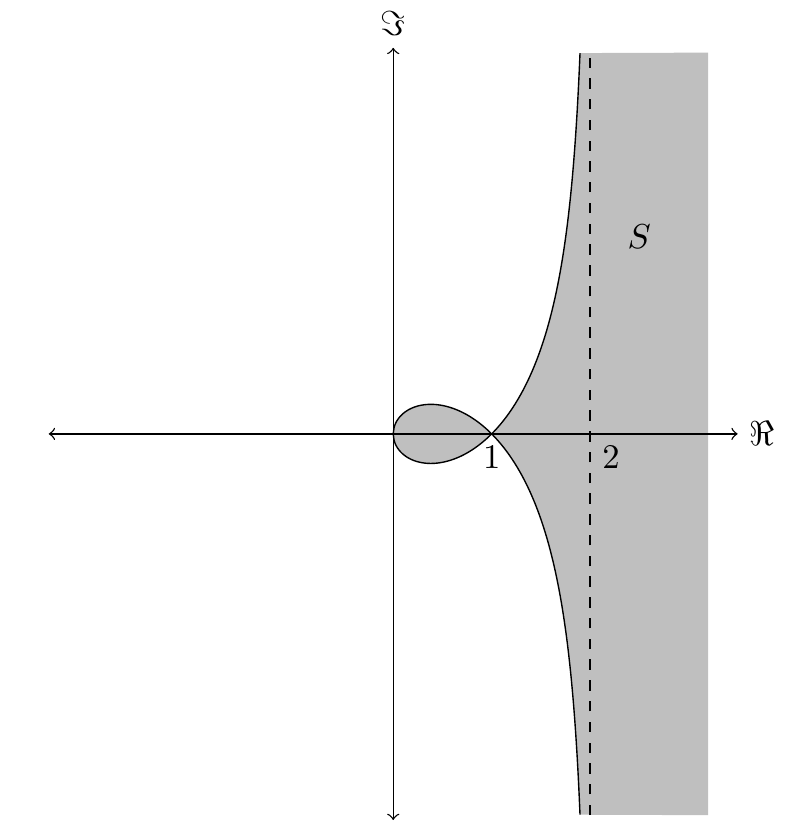}
		\caption{The region $S=\left\{x + iy : x \geq \frac{2x^2 + 2y^2}{1 + x^2 + y^2}\right\}$}
		\label{fig:diagram}
	\end{figure}

To state our theorem, let $\delta > 0$ and let $N_n = N_n(\delta)$ be the number of zeros of $P_n$ with distance at most $\delta$ to $S$, where the roots are counted with multiplicity. We prove the following:
	
	\begin{theorem} \label{thm:region}
		Let $\delta >0$ and let $X_n \in \{0,\ldots,n\}$ be a sequence of random variables with $\sigma_n \rightarrow \infty$. If every zero $\zeta$ of $P_n$ satisfies $|1 - \zeta| > \delta$ and $N_n(\delta) = o(\sigma_n^{3})$ then $\{X_n\}$ satisfies a central limit theorem. 
	\end{theorem}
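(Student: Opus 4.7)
The plan is to verify Lévy's criterion: show $\varphi_{X_n^*}(t) \to e^{-t^2/2}$ pointwise in $t$. Writing $s = t/\sigma_n$ and factoring $P_n$ over its roots $\Lambda_n$,
\[
\varphi_{X_n^*}(t) = e^{-it\mu_n/\sigma_n}\prod_{\zeta \in \Lambda_n}\frac{e^{is}-\zeta}{1-\zeta}.
\]
Fix a continuous branch of $\log$ from $s = 0$ and set $L_\zeta(s) := \log\bigl((e^{is}-\zeta)/(1-\zeta)\bigr)$. Taylor expand to second order,
\[
L_\zeta(s) = \frac{is}{1-\zeta} + \frac{s^2}{2}\cdot\frac{\zeta}{(1-\zeta)^2} + R_\zeta(s),
\]
and use the identities $\sum_\zeta 1/(1-\zeta)=\mu_n$ and $\sum_\zeta \zeta/(1-\zeta)^2=-\sigma_n^2$, both of which follow from differentiating $\log P_n$ at $z=1$. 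The linear and quadratic terms collapse, giving $\log \varphi_{X_n^*}(t) = -t^2/2 + \sum_\zeta R_\zeta(s)$, so everything reduces to proving $\sum_\zeta R_\zeta(s) \to 0$.

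Split $\Lambda_n = G \sqcup B$ with $B = \{\zeta : \mathrm{dist}(\zeta,S) \leq \delta\}$ (so $|B| = N_n(\delta)$). Because $|1-\zeta| > \delta$ for every root and $|\zeta|(1+|\zeta|)/|1-\zeta|^3 \leq C(\delta)$ (check the cases $|\zeta|\leq 2$ and $|\zeta|>2$ separately), the estimate $|L_\zeta'''(r)| \leq 8|\zeta|(1+|\zeta|)/|1-\zeta|^3$ combined with Taylor's integral-remainder formula gives the uniform bound $|R_\zeta(s)| \leq C(\delta)|s|^3$. Therefore $\bigl|\sum_{\zeta \in B} R_\zeta(s)\bigr| \leq C(\delta) N_n(\delta) |t|^3/\sigma_n^3 \to 0$ by the hypothesis $N_n(\delta) = o(\sigma_n^3)$, so the bad roots are disposed of. For the good roots we exploit the geometry of $S$: a direct calculation shows $\partial S$ is precisely $\{\zeta : \mathrm{Re}[\zeta/(1-\zeta)^2] = 0\}$, so for $\zeta \in G$ the quantity $v_\zeta := -\mathrm{Re}[\zeta/(1-\zeta)^2]$ is strictly positive and represents $\zeta$'s contribution to $\sigma_n^2 = \sum_\zeta v_\zeta$. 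Using that $P_n$ has real coefficients, group the non-real good roots into conjugate pairs and aim for the per-pair bound
\[
\bigl|R_\zeta(s)+R_{\bar\zeta}(s)\bigr| \leq C(\delta)\,|s|^3\,(v_\zeta+v_{\bar\zeta}),
\]
with $|R_\zeta(s)| \leq C(\delta)|s|^3 v_\zeta$ for real roots of $G$. Summing then yields $\bigl|\sum_{\zeta \in G} R_\zeta(s)\bigr| \leq C(\delta)|s|^3 \sum_G v_\zeta = O(|t|^3/\sigma_n) \to 0$, where the $O(N_n(\delta))$ correction between $\sum_G v_\zeta$ and $\sigma_n^2$ is itself absorbed by the bad-root bound.

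The main obstacle is proving the per-pair bound. A per-root version $|R_\zeta(s)| \leq C|s|^3 v_\zeta$ cannot hold: at $\zeta = 1+iR$ (which is at distance $\approx 1$ from $S$ for large $R$) one has $|R_\zeta(s)| \sim |s|^3/R$ while $v_\zeta = 1/R^2$, so the ratio blows up like $R$. The missing factor is recovered only after conjugate-pair cancellation: the odd-order Taylor coefficients of $L_\zeta + L_{\bar\zeta}$ are purely imaginary, and at order $s^3$ their magnitude is $|\mathrm{Re}[\zeta(1+\zeta)/(1-\zeta)^3]|/3 \sim 1/R^2$, rather than the much larger $|\zeta(1+\zeta)/(1-\zeta)^3| \sim 1/R$. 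The per-pair bound therefore reduces to uniform geometric inequalities such as $|\mathrm{Re}[\zeta(1+\zeta)/(1-\zeta)^3]| \leq C(\delta)\, v_\zeta$, valid for all $\zeta$ with $\mathrm{dist}(\zeta, S) > \delta$, along with analogues governing the higher-order Taylor coefficients and the imaginary part of $R_\zeta + R_{\bar\zeta}$. Establishing these geometric inequalities uniformly in the location of $\zeta$—subject only to the $\delta$-gap from $S$—is the technical heart of the proof.
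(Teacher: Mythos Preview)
Your route via L\'evy's criterion and per-root logarithms is not the paper's. The paper uses the method of moments: it groups roots into conjugate pairs, assigns to each pair the quadratic factor $P_\zeta(z)=(z-\zeta)(z-\bar\zeta)/|1-\zeta|^2$ with ``pseudo-moments'' $m_k(\zeta)$, expands $\EE[(X_n^*)^k]$ as a sum over compositions of $k$ into parts $\geq 2$, and shows that only the composition $(2,2,\ldots,2)$ survives in the limit. Its technical core is a lemma giving $|m_k(\zeta)|\leq c_k$ for all $\zeta$ with $|1-\zeta|>\delta$, and $|m_k(\zeta)|\leq c_k'(\delta)\,m_2(\zeta)$ whenever $d(\zeta,S)>\delta$; this is proved by writing $m_k/m_2$ as an explicit rational function of $\zeta$ and then combining compactness on bounded regions with direct asymptotic estimates as $|\zeta|\to\infty$ in two regimes.

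Your setup and the bad-root estimate are correct, but there is a genuine gap: you identify the per-pair inequality $|R_\zeta(s)+R_{\bar\zeta}(s)|\leq C(\delta)|s|^3(v_\zeta+v_{\bar\zeta})$ as the crux and then do not prove it --- you say so yourself in the final sentence. This inequality is essentially equivalent to the paper's moment lemma: one has $v_\zeta+v_{\bar\zeta}=m_2(\zeta)$, and $R_\zeta+R_{\bar\zeta}$ is the cumulant tail $\sum_{k\geq 3}\kappa_k(is)^k/k!$ of the quadratic factor, where each $\kappa_k$ is a polynomial in $m_2,\ldots,m_k$ with no constant term and is therefore bounded by $C_k\,m_2$ once $|m_j|\leq c_j'\,m_2$ is available. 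So the gap is fillable by the same compactness-and-asymptotics argument the paper runs on $m_k/m_2$. But as it stands you have an outline rather than a proof; the substantive geometric content --- that $S$ is precisely the locus where $m_2$ ceases to dominate the higher $m_k$, and that away from a $\delta$-neighbourhood of $S$ the domination is uniform --- is exactly what you have left undone.
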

	
\subsection{Some remarks on the proofs}

To prove Theorems~\ref{thm:largeVarThm} and \ref{thm:TlToZero}, we control the rescaled characteristic functions $\phi_n(\t/\s_n)$. Our first step is to find an appropriate form of this function in terms of the roots of the polynomial $P_n$. As it turns out, it is rather difficult to work directly with $\phi(\t/\s_n)$ so we instead opt to work with $\phi_n(\t/b_n)$ for some appropriately chosen sequence $b_n$. One of our main ingredients is captured in Lemma~\ref{pr:high-cumulants-enough}, which allows us to use these results to relate information about $\phi_n(\t/b_n)$ to $\phi(\t/\s_n)$. To control $\phi(\t/b_n)$ in the proof of Theorem~\ref{thm:largeVarThm}, we carefully analyse how much each root contributes to our exponential representation of $\phi$. The proof of Theorem~\ref{thm:TlToZero} uses a technique from linear algebra. The proof of Theorem~\ref{thm:MThm} is rather different: we use a moment method to show convergence to a normal.

It is perhaps interesting to note that our results seem to use the fact that the polynomials $\{P_n\}$ have non-negative coefficients in a much more central way than in previous work. To understand what is meant by this, let $P$ be an (arbitrary) polynomial with $P(1) = 1$. We may formally define $\mu = P'(1)$ and $\sigma^2 =  P''(1) + P'(1)-(P'(1))^2$ and then say that a sequence of such polynomials $\{P_n(z)\}_n$ satisfies a central limit theorem if $P_n(e^{i \theta/\sigma_n})e^{-i\theta \mu_n / \sigma_n} \to e^{-\theta^2 / 2}$. The results of Hwang and Zacharovas and LPRS both work work in this more general setting and, indeed, in this generalized setting, the bound on the variance \cite{LPRS} cannot be improved. Thus Theorems \ref{thm:largeVarThm} and \ref{thm:TlToZero} depend more deeply on the hypothesis of non-negative coefficients. 


	\section{Some Preparations} \label{sec:Preparations}
	
	 If $X$ is a real-valued random variable, we define, as is standard, the \emph{characteristic function of} $X$ as the function $\phi_X(\theta) = \EE e^{i\t X } $, for $\theta \in \R$. A key ingredient in our results will be the following theorem of Marcinkiewicz \cite[Theorem 7.3.3]{lukacs} \cite{Marcin}, which gives us some information about the structure of characteristic functions that have a specific exponential form.
	
	\begin{theorem} \label{thm:MThm} Let $P(X) \in \mathbb{C}[X]$ be a polynomial. If $\phi(\theta) = e^{P(\theta)}$ is the characteristic function of a real-valued random variable, then $\deg(P) \leq 2$. \end{theorem}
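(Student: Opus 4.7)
The plan is to assume $\deg P = n \ge 3$ and derive a contradiction by combining the basic inequality $|\phi(t)|\le 1$ on $\R$ with the log-convexity of the moment generating function of $X$.

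First, I would exploit that $\phi=e^{P}$ is entire. The rapid decay of the Taylor coefficients of $\phi$ at $0$ forces $X$ to have a finite moment generating function $M(s)=\EE e^{sX}$ for every $s\in\R$, so that the representation $\phi(z)=\EE e^{izX}$ extends to all $z\in\C$ and satisfies the pointwise bound
\[|\phi(u+iv)|\le \EE e^{-vX}=\phi(iv),\qquad u,v\in\R.\]
In particular $\phi(iv)>0$, so $R(v):=P(iv)=\log\phi(iv)$ is a polynomial with real coefficients, and by H\"older's inequality $R$ is convex on $\R$.

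Next I would extract a parity constraint. Since a nonlinear polynomial convex on $\R$ must have even degree (otherwise its second derivative, of odd degree $\ge 1$, would change sign), $n=2K$ is even. Writing $R(y)=\sum_k \gamma_k(iy)^k$, its leading coefficient is $\gamma_n(-1)^{K}$; convexity forces this to be positive, and in particular $\gamma_n$ is a nonzero real number of sign $(-1)^K$. Combined with the leading-order consequence of $\mathrm{Re}(P(t))\le 0$ on $\R$, which gives $\gamma_n\le 0$, this already rules out $n\equiv 0\pmod{4}$.

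For the surviving case $n\equiv 2\pmod 4$, I would use the full strength of $|\phi(u+iv)|\le \phi(iv)$ along rays $z=re^{i\t}$. Dividing by $r^n$ and letting $r\to\infty$ produces the Phragm\'en--Lindel\"of-type inequality
\[
\gamma_n\cos(n\t)\le |\gamma_n|\sin^n\t\qquad\text{for every }\t\in\R,
\]
which, since $\gamma_n<0$, is equivalent to $-\cos(n\t)\le \sin^n\t$. Evaluating at $\t=\pi/n$ yields $1\le \sin^n(\pi/n)<1$, the desired contradiction.

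The main obstacle is the first step: turning the algebraic fact that $e^{P}$ is entire into the probabilistic fact that $\phi(iv)=\EE e^{-vX}$ is a (positive, log-convex) moment generating function, so that $R$ really is a real convex polynomial. Once this conversion is secured, the parity and sign bookkeeping together with the single-point evaluation at $\t=\pi/n$ dispatch all $n\ge 3$ uniformly.
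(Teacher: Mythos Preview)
The paper does not prove this theorem: Theorem~\ref{thm:MThm} is merely stated as the classical result of Marcinkiewicz, with a citation to \cite{Marcin} and \cite[Theorem~7.3.3]{lukacs}, and is then used as a black box inside Lemma~\ref{pr:high-cumulants-enough}. There is therefore no proof in the paper against which to compare your attempt.

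On its own merits your outline is correct and is in fact close to the classical argument. The chain --- entire extension $\Rightarrow$ all moments exist with enough decay that $\EE e^{sX}<\infty$ for all $s\in\R$ $\Rightarrow$ the ridge inequality $|\phi(u+iv)|\le\phi(iv)$ holds $\Rightarrow$ $R(v)=P(iv)$ is a real convex polynomial $\Rightarrow$ $n$ is even and the sign of $\gamma_n$ is pinned down --- is sound, and your final step is clean: for $n\equiv 2\pmod 4$ the leading-order comparison along $z=re^{i\theta}$ indeed yields $-\cos(n\theta)\le\sin^n\theta$, and $\theta=\pi/n$ gives $1\le\sin^n(\pi/n)<1$ for $n\ge 3$.

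Two places deserve one more sentence each in a formal write-up. First, the passage from ``$e^{P}$ is entire'' to ``$\EE e^{sX}<\infty$ for all $s$'' is the standard analytic-strip theorem for characteristic functions (e.g.\ \cite[Theorem~7.1.1]{lukacs}); if you want to stay self-contained, note that $\EE X^{2k}=(-1)^k\phi^{(2k)}(0)$ and use the Cauchy estimates for the entire function $\phi$ to control the even (hence, via Cauchy--Schwarz, all absolute) moments. Second, to conclude that $R(v)=P(iv)$ is literally real-valued (not just real modulo $2\pi i$), normalize so that $P(0)=0$; this is harmless since $\phi(0)=1$ forces $P(0)\in 2\pi i\Z$, and subtracting that constant does not change $\phi$.
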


We also use the well known fact that $X_n \rightarrow X$ in distribution if and only if the associated characteristic functions $\phi_{X_n}(\theta) \rightarrow \phi_X(\theta)$ converge point-wise. Thus, to show the convergence $X_n^* \rightarrow Z$ in distribution, where $Z \sim \mathcal{N}(0,1)$, it suffices to show the point-wise convergence $P_n(e^{i\t/\s_n})e^{-i\theta \mu_n/\s_n} \rightarrow e^{-\t^2/2}$. With this target in mind, we seek an exponential form for the polynomials $P_n(z)$.

\subsection{An exponential form for $P$}

   To find such an exponential expression, we take logarithms of our probability generating function in an appropriate region. We use the principal branch of the logarithm: for $z \in \mathbb{C} \setminus \{0\}$, write $z = re^{i\theta}$, where $r >0$ and $\theta \in (-\pi,\pi]$. When then define $\log z = \log r + i\theta$. We use three simple properties of this function: that $\log z_1z_2 = \log z_1 + \log z_2 + 2\pi i t $, where $t \in \{-1,0,1\}$; that $\log e^z = z$, and that $-\log(1 - z) = \sum_k z^k / k$, for all $|z| \leq 1$ with $z\neq 1$.
	
	For a polynomial $P$ with roots $\{\zeta\}$, define 
	$$T_k = \sum_{\zeta : |\zeta| > 1} \zeta^{-k} \quad \mathrm{and} \quad S_k = \sum_{\zeta : |\zeta| < 1} \zeta^k.$$
The following lemma gives us our desired exponential form for our polynomials. 
	
	\begin{lemma} \label{lem:expFormOfP} Let $\delta > 0$, and let $X \in \{0,\ldots,n \}$ be a random variable with probability generating function $P$. If all the roots $\z$ of $P$ satisfy $|1 - \zeta| > \delta $ and $|\zeta| \neq 1$, then for all $z$ with $|z-1| < \delta$, we have the expression 
		\begin{equation} \label{equ:lemExpFormOfP}
				 P(z) = \exp\left( -\sum_{k \geq 1}\frac{T_k(z^k-1)}{k} - \sum_{k \geq 1}\frac{S_k(1/z^k-1)}{k} + R\log(z)   \right) \end{equation}
		where $R = |\{\zeta : |\zeta| < 1\}|.$ 
	\end{lemma}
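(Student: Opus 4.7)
The plan is to compute $\log P(z)$ from the logarithmic derivative, expanding each $1/(z-\zeta)$ as a geometric series in a way that depends on whether $|\zeta|<1$ or $|\zeta|>1$, then integrating and pinning down the constant of integration using $P(1)=1$.

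First I would write
\[ \frac{P'(z)}{P(z)} = \sum_\zeta \frac{1}{z-\zeta}, \]
where the sum runs over the roots of $P$ counted with multiplicity. The hypothesis $|\zeta|\neq 1$ lets me split this cleanly into the contributions from $|\zeta|>1$ and $|\zeta|<1$. For $|\zeta|>1$ I expand $1/(z-\zeta) = -\sum_{k\geq 0} z^k/\zeta^{k+1}$, valid for $|z|<|\zeta|$; for $|\zeta|<1$ (allowing $\zeta=0$) I expand $1/(z-\zeta) = \sum_{k\geq 0} \zeta^k/z^{k+1}$, valid for $|z|>|\zeta|$. Interchanging the two summations and peeling off the $k=0$ term of the second family yields
\[ \frac{P'(z)}{P(z)} = -\sum_{k\geq 1} T_k z^{k-1} + \frac{R}{z} + \sum_{k\geq 1} \frac{S_k}{z^{k+1}}. \]

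I would then antidifferentiate term by term, using $\int z^{k-1}\,dz = z^k/k$, $\int z^{-k-1}\,dz = -z^{-k}/k$, and $\int dz/z = \log z$, to obtain
\[ \log P(z) = -\sum_{k\geq 1}\frac{T_k z^k}{k} - \sum_{k\geq 1}\frac{S_k z^{-k}}{k} + R\log z + C \]
for some constant $C$. Setting $z=1$ and using $\log P(1)=0$ fixes $C = \sum_{k}T_k/k + \sum_{k}S_k/k$; rearranging and exponentiating then produces exactly the formula claimed by the lemma.

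The main obstacle is convergence: the two geometric expansions are simultaneously valid only in the annulus $\max_{|\zeta|<1}|\zeta| < |z| < \min_{|\zeta|>1}|\zeta|$. Because $P$ has finitely many roots and none on the unit circle, this annulus contains a neighborhood of $z=1$, but it can be strictly smaller than the disc $|z-1|<\delta$ when some root sits close to the unit circle. I would establish the identity first on this smaller annular neighborhood, where the interchanges of summation and termwise integration are rigorously justified, and then extend to the full disc $|z-1|<\delta$ by analytic continuation. Both sides are holomorphic on $|z-1|<\delta$ (taking $\delta$ small enough, or restricting to the simply-connected subset avoiding the branch cut of $\log z$, which is harmless since the statement is local at $1$), so agreement on an open subset forces agreement throughout.
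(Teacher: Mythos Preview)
Your approach is correct and closely parallel to the paper's, though slightly less direct. The paper factors $P(z) = c\,z^R\prod_{|\zeta|>1}(1-z/\zeta)\prod_{|\zeta|<1}(1-\zeta/z)$, takes the logarithm of each factor, and expands each $\log(1-w)$ as its Taylor series; this produces the two sums $\sum_k T_k z^k/k$ and $\sum_k S_k z^{-k}/k$ in one stroke, after which setting $z=1$ and exponentiating finishes exactly as you do. Your route---compute $P'/P$, expand in geometric series, then integrate term by term---is essentially the derivative of this computation followed by its antiderivative, so the paper's argument is the same calculation with that detour removed. One caveat on your last paragraph: the analytic-continuation step to all of $|z-1|<\delta$ does not quite work as stated, because the series $\sum_k T_k z^k/k$ has radius of convergence $\min_{|\zeta|>1}|\zeta|$ (and the $S_k$ series the analogous inner bound), so the right-hand side is only defined on the annulus, not the whole disc. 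The paper's proof in fact shares this imprecision; in practice the lemma is only ever applied for $z=e^{i\theta}$ with $\theta$ small, where the annular region suffices.
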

	
	\begin{proof} Let $\{\zeta\}$ be the roots of $P(z)$. We write
		\begin{align*}
		P(z) &= \PP(X = n) \prod_{|\zeta| > 1}( z - \z) \prod_{|\zeta| < 1}(z - \zeta) \\		
		&= c z^R \prod_{|\zeta| > 1} (1 - z/\zeta) \prod_{|\zeta| < 1}(1 - \zeta / z),\end{align*}
		for some non-zero constant $c$.
		
		We now take a logarithm of this expression for $P(z)$ for $z$ satisfying $|z - 1| < \delta$.  This is possible as all the zeros $\z$ of $P$ satisfy $|1 - \zeta | > \delta$ and so we write 
		\[ \log P(z) =  \sum_{|\zeta| > 1} \log(1 - z/\zeta) + \sum_{|\zeta| < 1} \log(1 - \zeta/z) + R \log(z) + 2\pi i M(z) + \log c, \] 
		where $M(z)$ is an integer valued function. Now since $z$ satisfies $|z-1| < \delta$ and there are no zeros $\zeta$ with $|1 - \zeta| < \delta$, we may use the Taylor expansion of the logarithmic terms. We obtain
		\begin{align*}
		\log P(z) &=  \sum_{|\zeta| > 1}\left[ -\sum_{k\geq 1 } \frac{z^k}{k\zeta^k}\right] + \sum_{|\zeta| < 1}\left[ -\sum_{k\geq 1 } \frac{\zeta^k}{k z^k}\right] + R\log(z) + 2\pi i M(z) + \log c\\
		&= -\sum_{k \geq 1} \frac{T_kz^k}{k} -\sum_{k \geq 1} \frac{S_k}{z^k k} +R \log(z) + 2\pi i M(z) + \log c.
		\end{align*}
		Since $P(1) = 1$, we must have 
		\[  \sum_{k \geq 1} \frac{T_k}{k} + \sum_{k \geq 1} \frac{S_k}{k} = \log p_0 \mod 2\pi i, \] so we may write 
		\begin{align*}
		\log P(z) &=  -\sum_{k \geq 1}\frac{T_k(z^k-1)}{k} - \sum_{k \geq 1}\frac{S_k(1/z^k-1)}{k} + R\log(z)  + 2\pi i \widetilde{M}(z), 
		\end{align*}
		where $\widetilde{M}(z)$ is an integer-valued function. 
		We now exponentiate each side of the equation to obtain the desired result. 
	\end{proof}
	
	In the following lemma, we use the expression at (\ref{equ:lemExpFormOfP}) to obtain a exponential expression for $P(e^{i\theta})$. This lemma will ultimately be applied to the characteristic functions of our given sequence $\{X_n\}$. We shall also see that this expression gives us a way of writing the cumulants of a random variable $X$ in terms of the roots of its probability generating function. Recall that if $X$ is a random variable and we put $K(t) = \EE e^{tX}$ then the cumulant sequence $\{\kappa_n\}$ of $X$ is defined as the coefficients $K(t) = \sum_{k\geq 1} \frac{\kappa_k t^k}{k!}$ in the Talyor expansion of $K(t)$ about the origin.
	
	\begin{lemma}\label{lem:cumulant-form}  Let $\delta >0$, $n \in \mathbb{N}$ and let $X$ be a random variable taking values in $\{0,\ldots,n\}$ with probability generating function $P$. If all the roots $\z$ of $P$ satisfy both $|1- \zeta| > \delta $ and $|\zeta| \not=1$ then there exists an $\eps > 0$ so that for $\theta \in \mathbb{C}$ with $|\theta| < \eps$ we have 

		\begin{equation} \label{equ:LemCumulantForm}
				 P(e^{i\theta}) = \exp\left( -\sum_{m \geq 1} \frac{(i\theta)^m}{m!}(A_m + B_m) + Ri\theta \right ),\end{equation}
		\noindent 
		where $A_m = \sum_{k\geq 1 } T_k k ^{m-1}$, $B_m = (-1)^m\sum_{k \geq 1} S_k k^{m-1}$ and $R = |\{\zeta : |\zeta| < 1\}|$. Moreover, the $\eps$ may be chosen so that convergence of the double sum is uniform for $|\theta| < \eps$.
	\end{lemma}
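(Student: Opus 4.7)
My plan is to obtain the expression for $P(e^{i\theta})$ by substituting $z = e^{i\theta}$ into the formula from Lemma~\ref{lem:expFormOfP} and then expanding $e^{\pm i k\theta} - 1$ as a Taylor series in $\theta$ and interchanging the order of summation. The main bookkeeping is in choosing $\eps$ small enough that every relevant series is absolutely convergent, so that the interchange is justified.

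Since $P$ is a polynomial, it has only finitely many roots. Let $\rho := \min\{|\z| : |\z| > 1\} > 1$ and $r := \max\{|\z| : |\z| < 1\} < 1$ (using the standard conventions $\min \emptyset = +\infty$, $\max \emptyset = 0$). In particular $|T_k| \le n\rho^{-k}$ and $|S_k| \le n r^k$ for every $k \ge 1$. Choose $\eps > 0$ small enough that (i) $|e^{i\theta}-1| < \delta$ for all complex $\theta$ with $|\theta| < \eps$, so that Lemma~\ref{lem:expFormOfP} applies at $z = e^{i\theta}$, (ii) $e^{|\theta|} < \rho$ and $e^{|\theta|} < 1/r$ for $|\theta|<\eps$, and (iii) $|\theta| < \pi$ so that $\log(e^{i\theta}) = i\theta$.

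With these choices, Lemma~\ref{lem:expFormOfP} gives
\[
P(e^{i\theta}) = \exp\!\left( -\sum_{k\ge 1}\frac{T_k(e^{ik\theta}-1)}{k} - \sum_{k\ge 1}\frac{S_k(e^{-ik\theta}-1)}{k} + R\, i\theta\right).
\]
Inserting the Taylor expansions $e^{ik\theta}-1 = \sum_{m\ge 1} (ik\theta)^m/m!$ and $e^{-ik\theta}-1 = \sum_{m\ge 1}(-ik\theta)^m/m!$, the first double sum inside the exponential becomes
\[
\sum_{k\ge 1}\frac{T_k}{k}\sum_{m\ge 1}\frac{(ik\theta)^m}{m!}.
\]
The double series $\sum_{k,m\ge 1} \tfrac{|T_k|}{k}\tfrac{(k|\theta|)^m}{m!}$ is bounded above by $\sum_{k\ge 1}\tfrac{|T_k|}{k}(e^{k|\theta|}-1) \le \sum_{k\ge 1} n\rho^{-k} e^{k|\theta|}$, which converges by (ii). Hence Fubini applies and we may swap the order of summation to get $\sum_{m\ge 1} \tfrac{(i\theta)^m}{m!} A_m$ with $A_m = \sum_{k\ge 1} T_k k^{m-1}$; the same argument with the bound $|S_k|\le n r^k$ treats the $S_k$-sum and produces $B_m = (-1)^m \sum_{k\ge 1} S_k k^{m-1}$. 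Combining these with $R\log(e^{i\theta}) = Ri\theta$ yields the stated identity~(\ref{equ:LemCumulantForm}).

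Finally, the uniform convergence of the double sum for $|\theta|<\eps$ follows from the Weierstrass $M$-test applied to the same dominating series $\sum_{k\ge 1}(n\rho^{-k} + nr^k)\,e^{k\eps'}$ for any $\eps < \eps' < \min(\log\rho, \log(1/r))$. The only mildly delicate step is the verification of absolute convergence for the swap; since the double sum is geometric in $k$ after summing the $m$-series, this is routine, and there is no essential obstacle beyond ensuring $\eps$ is smaller than all three natural thresholds above.
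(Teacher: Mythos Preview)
Your proof is correct and follows essentially the same approach as the paper's: apply Lemma~\ref{lem:expFormOfP} at $z=e^{i\theta}$, Taylor-expand $e^{\pm ik\theta}-1$, and justify the interchange of sums via absolute convergence using the geometric bounds $|T_k|\le n\rho^{-k}$, $|S_k|\le n r^k$. You are slightly more explicit than the paper in spelling out the three thresholds that $\eps$ must respect and in invoking the Weierstrass $M$-test for the uniform convergence clause; the only cosmetic wrinkle is that the auxiliary $\eps'$ in your final sentence is unnecessary once $\eps$ is chosen strictly below $\min(\log\rho,\log(1/r))$.
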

 Anticipating the application of the Lemma~\ref{lem:cumulant-form}, the reader may feel that it is far too weak for our purposes; the neighbourhood on which we have equality depends dearly on the closet root of $P$ to the unit circle and, in general, will be far too small for our application. However, this statement \emph{will} be sufficient when used in tandem with the uniqueness of analytic continuations.  Indeed, in the course of the proof, we will see that both expressions for $P$ are analytic in suitably sized regions and thus, from this lemma, we will be able to conclude that they are equal in these larger domains.

	\begin{proof} The proof is straightforward; we use Lemma~\ref{lem:expFormOfP} to write $P(z)$ in an appropriate form, use the Taylor expansion of $e^{i\theta}$ and then exchange the order of summation. The only point at which we need to be careful is with the exchange of sums. However, we will quickly see that there is no danger as we are able to restrict $\theta$ to guarantee that the double sum is absolutely convergent.
	
	 Indeed, it is enough to show that the sums $\sum_m \sum_k \frac{\theta^m T_k k^{m-1}}{m!}$, $\sum_m \sum_k \frac{\theta^m S_k k^{m-1}}{m!}$, expanded from line (\ref{equ:LemCumulantForm}), are absolutely convergent.  We show the absolute convergence for the sum with the $T_k$ terms, and note that the proof of the other sum is analogous.  Define $\alpha = \min_{\zeta: |\zeta|>1} |\zeta| > 1$ and note that $|T_k| \leq \frac{n}{\alpha^k}$, which implies 
\[ \sum_{m \geq 1}\sum_{k \geq 1} \frac{|\theta|^m}{m!}|T_k|k^{m-1} \leq n \sum_{m \geq 1} \sum_{k \geq 1} \frac{|\theta|^m}{m!}\alpha^{-k} k^{m-1}
	 	\leq n\sum_{k \geq 1} \alpha^k \sum_{m \geq 1} \frac{|\theta|^m}{m!} k^{m-1}	,  \]
where the exchange in sums is allowed due to the positivity of the sequence. This last sum is bounded above by $ n\sum_{k \geq 1} \alpha^{-k}e^{k|\theta|} $, which converges absolutely whenever $|e^{|\theta|}/\alpha| < 1$.  Since $\alpha > 1$, there exists an $\eps > 0$ so that this occurs for $|\theta| < \eps$.  Free to exchange sums, we compute 
\begin{align*}
P(e^{i\theta}) &= \exp\left(- \sum_{k\geq 1} \frac{T_k (e^{i\theta k} - 1)}{k} - \sum_{k \geq 1} \frac{S_k (e^{-i\theta k}- 1)}{k} + Ri \theta \right) \\
&= \exp\left( -\sum_{k \geq 1} \sum_{m \geq 1} \frac{1}{k}\left(\frac{T_k (i k \theta)^m}{m!} + \frac{S_k (- i k \theta)^m}{m!} \right)  + Ri\theta \right) \\
&= \exp\left(- \sum_{m \geq 1} \frac{(i\theta)^m}{m!} \sum_{k \geq 1} \left(T_k k^{m-1} + (-1)^m S_k k^{m-1} \right) + Ri\theta\right) \,.
\end{align*}
\end{proof}
\begin{corollary}\label{cor:cumulants}
In the notation above, the $m$th cumulant of $X$ is $-(A_m + B_m)$ for $m \geq 2$ and $-A_1 - B_1 + R$ for $m = 1$\,.
\end{corollary}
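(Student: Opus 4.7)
The plan is to obtain the result by matching two convergent power series expansions of $\log P(e^{i\theta})$ near $\theta = 0$: the expansion coming from the definition of cumulants, and the expansion provided by Lemma~\ref{lem:cumulant-form}.

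First, observe that the characteristic function of $X$ is exactly $\phi_X(\theta) = \EE e^{i\theta X} = P(e^{i\theta})$. Since $X$ takes values in $\{0,\ldots,n\}$, the moment generating function $K(t) = \EE e^{tX}$ is entire, and because $\phi_X(0) = 1$ and $\phi_X$ is continuous, there is a neighbourhood of $0$ on which $\log \phi_X(\theta)$ is well-defined via a continuous branch of the logarithm, analytic in $\theta$. By the definition of cumulants (and analytic continuation from the real axis), one has the Taylor expansion
\[ \log \phi_X(\theta) \;=\; \sum_{m \geq 1} \kappa_m \frac{(i\theta)^m}{m!} \]
on some disc around the origin.

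Next, Lemma~\ref{lem:cumulant-form} supplies a disc of radius $\eps > 0$ on which
\[ P(e^{i\theta}) \;=\; \exp\!\left( -\sum_{m \geq 1} \frac{(i\theta)^m}{m!}(A_m + B_m) + R \, i\theta \right), \]
with the double series converging uniformly. On the intersection of the two discs both expressions for $\log P(e^{i\theta})$ agree (up to an additive constant $2\pi i k$ that is forced to vanish since both sides equal $0$ at $\theta = 0$). Equating the two Taylor series and using the uniqueness of power series coefficients, I would compare the coefficients of $(i\theta)^m/m!$: the linear term picks up an extra contribution of $R$ from the $Ri\theta$ summand, while the higher-order terms receive no such extra contribution. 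This yields
\[ \kappa_1 = -(A_1 + B_1) + R \quad \text{and} \quad \kappa_m = -(A_m + B_m) \ \text{for } m \geq 2, \]
which is exactly the claim.

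There is no real obstacle; the only point requiring minor care is the choice of the branch of the logarithm, which is pinned down by the normalisation $\log \phi_X(0) = 0$ (equivalently, by the fact that both candidate expressions for $\log P(e^{i\theta})$ are continuous and vanish at $\theta = 0$). Given the clean exponential representation in Lemma~\ref{lem:cumulant-form}, the entire argument reduces to a coefficient comparison.
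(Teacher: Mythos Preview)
Your proposal is correct and follows essentially the same approach as the paper: both arguments invoke Lemma~\ref{lem:cumulant-form}, take logarithms near $\theta=0$, and read off the cumulants by equating Taylor coefficients. Your version is merely more explicit about the branch of the logarithm and the domain of validity, but there is no substantive difference.
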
	
\begin{proof}
	Since $P(e^{i\theta}) = \EE[e^{i\theta X}]$, Lemma $\ref{lem:cumulant-form}$ implies that the moment generating function $\EE[e^{tX}]$ is analytic in some neighborhood of $t = 0$.  Taking logarithms and equating coefficients completes the proof.
\end{proof}

	\subsection{Controlling Higher Cumulants is Enough}
As noted earlier, we shall use Lemma~\ref{lem:cumulant-form} to obtain an expression for the characteristic function $\phi_n(\theta) = P_n(e^{i\theta})$ of $X_n$. So, in line with the strategy of showing convergence of the characteristic functions, we are led naturally to control the rescaled characteristic functions $\phi_n(\t/\s_n)$. It turns out this is somewhat tricky to do directly, and instead we will show that there is some appropriately chosen sequence $\{b_n\}$ for which we can control $\phi_n(\t/b_n)$. The following, slightly technical lemma, tells that controlling $\phi_n(\t/b_n)$ will be sufficient for our purposes. For this lemma, define the \emph{height} $h(P)$ of a polynomial $P(x) = \sum_{i=0}^d a_ix^i$ to be the magnitude of its largest coefficient $h(P) = \max_i\{ |a_i|\}$.  

For this lemma we require the following basic fact. If $P_n \in \mathbb{C}[x]$ is a sequence of polynomials of bounded degree that converge to a polynomial $P$ and $x_n$ is a sequence of complex numbers converging to $x$, then $\lim_n P_n(x_n) = P(x)$. We shall also use L\'{e}vy's continuity theorem, which says that if $\phi_n(\t)$ is a sequence of characteristic functions that converges point-wise to a function $\phi$ then $\phi$ is a characteristic function provided $\phi(\t)$ is continuous at $\t = 0$. See, for example, \cite[Theorem 3.3.6]{durrett} for a proof.
	
	\begin{lemma} \label{pr:high-cumulants-enough} 
		For each $n$, let $X_n$ be a real-valued random variable with mean $\mu_n$, standard deviation $\sigma_n < \infty$ and with characteristic function $\phi_n(\theta)$. Let $\{Q_n(\theta)\}$ be polynomials of degree at most $M \in \mathbb{N}$ with bounded height and for which no subsequence tends to the zero polynomial. Let $\{g_n(\theta)\}$ be a sequence of twice continuously differentiable functions with $|g_n^{(i)}(\theta)| = o(1)$ for $i \in \{0,1,2\}$ and all $\theta \in \R$. If there exists a sequence of positive real numbers $\{b_n\}$ so that
		$$\phi_n(\theta/b_n)e^{-i\mu_n \theta / b_n} = \exp\left(Q_n(\theta) + g_n(\theta) \right)\,,$$
		then $X_n$ satisfies a central limit theorem.
	\end{lemma}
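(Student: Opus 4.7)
The right-hand side is precisely the characteristic function $\phi_{Y_n}(\theta)$ of $Y_n := (X_n - \mu_n)/b_n$, so the hypothesis reads $\phi_{Y_n}(\theta) = \exp(Q_n(\theta) + g_n(\theta))$. I will show that along every subsequence of indices, $(X_n - \mu_n)/\sigma_n$ admits a further subsequence converging in distribution to $\mathcal{N}(0,1)$; by the standard subsequence principle this forces the full sequence to converge to $\mathcal{N}(0,1)$, which is the desired central limit theorem.

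\textbf{Extraction and Marcinkiewicz.} The polynomials $Q_n$ have degree at most $M$ and bounded height, so their coefficient vectors lie in a compact subset of $\mathbb{C}^{M+1}$. Pass to a subsequence along which $Q_n \to Q$ coefficient-wise; by hypothesis $Q$ is not identically zero. Since $g_n(\theta) = o(1)$ pointwise, $\phi_{Y_n}(\theta) \to e^{Q(\theta)}$ pointwise, and since $e^Q$ is continuous at $0$, L\'{e}vy's continuity theorem yields that $e^Q$ is the characteristic function of some random variable $Y$, with $Y_n \to Y$ in distribution. Theorem~\ref{thm:MThm} (Marcinkiewicz) then forces $\deg Q \leq 2$.

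\textbf{Identifying $Q$.} Since $\EE Y_n = 0$ and $\EE Y_n^2 = \sigma_n^2/b_n^2$, differentiating the identity $\phi_{Y_n} = \exp(Q_n + g_n)$ at $\theta = 0$ gives
\[
Q_n(0) + g_n(0) \in 2\pi i \Z, \qquad Q_n'(0) + g_n'(0) = 0, \qquad Q_n''(0) + g_n''(0) = -\sigma_n^2/b_n^2 .
\]
Combining these with $g_n^{(j)}(0) = o(1)$ for $j \in \{0,1,2\}$ yields $Q(0) \in 2\pi i \Z$, $Q'(0) = 0$, and $\sigma_n^2/b_n^2 \to -Q''(0) =: s^2 \geq 0$. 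Together with $\deg Q \leq 2$ this forces $Q(\theta) = Q(0) - \tfrac{1}{2} s^2 \theta^2$, whence $e^{Q(\theta)} = e^{-s^2 \theta^2/2}$. If $s = 0$ then $Q$ is constant; replacing $Q_n$ by $Q_n - Q(0)$ (a harmless adjustment since $Q(0) \in 2\pi i \Z$ and $e^{2\pi i k} = 1$) then produces a subsequence of polynomials of degree $\leq M$ and bounded height tending to the zero polynomial, contradicting the non-triviality hypothesis. Hence $s > 0$ and $Y \sim \mathcal{N}(0, s^2)$.

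\textbf{Conclusion and main obstacle.} Since $\sigma_n/b_n \to s > 0$ along the subsequence, Slutsky's theorem gives $(X_n - \mu_n)/\sigma_n = (b_n/\sigma_n)\, Y_n \to (1/s)\,\mathcal{N}(0, s^2) = \mathcal{N}(0, 1)$ in distribution. Running this argument on every subsequence completes the proof. The main obstacle is the non-degeneracy step $s > 0$: ruling out $s = 0$ crucially requires Marcinkiewicz's degree bound on $Q$ together with the three moment identifications at $\theta = 0$ and the hypothesis that no subsequence of $\{Q_n\}$ tends to the zero polynomial. Everything else is bookkeeping with L\'{e}vy's continuity theorem and the subsequence principle.
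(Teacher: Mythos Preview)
Your argument tracks the paper's proof almost step for step: subsequence extraction by compactness of the coefficient vectors, L\'evy's continuity theorem, Marcinkiewicz to force $\deg Q\le 2$, identification of $Q(0),Q'(0),Q''(0)$ via the first two derivatives of the exponential identity at $\theta=0$, and the final passage from $Y_n\to\mathcal N(0,s^2)$ to $(X_n-\mu_n)/\sigma_n\to\mathcal N(0,1)$. The only cosmetic difference is that you invoke Slutsky where the paper substitutes $\xi_n=b_n\theta/\sigma_n$ and computes directly; the content is the same.

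There is one genuine slip, worth flagging even though the paper's own proof glosses over the same point. In ruling out $s=0$ you replace $Q_n$ by $Q_n-Q(0)$ and say this contradicts the hypothesis that no subsequence of $\{Q_n\}$ tends to the zero polynomial. But that hypothesis concerns the \emph{given} $Q_n$, not a shifted copy, so no contradiction follows. Concretely: take $X_n\equiv X\sim\mathrm{Pois}(1)$, $b_n=n$, $Q_n(\theta)\equiv 2\pi i$ and $g_n(\theta)=e^{i\theta/n}-1-i\theta/n$. Then $\phi_n(\theta/b_n)e^{-i\mu_n\theta/b_n}=\exp(Q_n+g_n)$, the $Q_n$ have bounded height with no subsequence tending to $0$, and $g_n,g_n',g_n''\to 0$ pointwise; yet $X_n^\ast=X-1$ is not asymptotically normal. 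The paper's ``we may assume $\Im(c_0)\in[-\pi,\pi]$'' followed by ``$Q=c_2\theta^2$ is non-zero'' hides the identical gap. In every application in the paper one has $Q_n(0)=0$, and with that extra assumption (or, equivalently, the assumption that no subsequence of $\{Q_n\}$ tends to a constant in $2\pi i\mathbb Z$) your argument and the paper's go through cleanly.
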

	\begin{proof}
		We show that $\phi_n(\theta/\sigma_n)e^{-i\mu_n \theta / \s_n} \to e^{-\t^2/2}$ as $n \rightarrow \infty$ for all $\theta \in \mathbb{R}$. To show this, note that it is sufficient to show that for every infinite subsequence $S = \{n_k\}_k$ there is a further infinite subsequence $S' \subseteq S$ so that 
		$ \lim_{n \in S'}  \phi_n(\theta/\sigma_n)e^{-i\mu_n \theta / \s_n} = e^{-\t^2/2}$. We put $Z_n = \frac{X_n - \mu_n}{b_n}$ for each $n$ and let 
		$\psi_n$ denote the characteristic function of $Z_n$.
		
		Let $S$ be a given infinite subsequence. We now restrict to a subsequence $S'$ for which $Q_n(\t)$ converges to a polynomial $Q(\t)$ of degree at most $M$. Note that $Q$ cannot be the zero polynomial by the condition on $Q_n$ in the hypothesis. We have 
	$$\lim_{n \in S'} \phi_n(\theta/b_n)e^{-i\mu_n \t /b_n} = \exp\left(Q(\theta)\right)\, ,$$
	for each $\theta \in \R$. Put $\psi(\theta) = e^{Q(\theta)}$ so that $\psi_n \to \psi$ pointwise, and note that since $\psi_n$ is the characteristic function of $Z_n = \frac{X_n - \mu_n}{b_n}$ and the limit $\psi(\theta)$ is continuous at $\t = 0$, $\psi(\theta)$  is the characteristic function of some random variable $Y$. Thus, by Marcinkiewicz's Theorem (Theorem \ref{thm:MThm}), it follows that $Q(\theta) = c_0 + c_1\theta + c_2\theta^2$ is a polynomial of degree at most $2$, where we may assume that $c_0$ has imaginary part in the interval $[-\pi,\pi]$. We show that $c_0,c_1 = 0 $. It is easy to see that $c_0 = 0$ as $e^{c_0} = \psi(0) = \EE e^0 = 1$. To see that $c_1 = 0$, we need the following straightforward claim. 
\begin{claim}	
We have that $\lim_{n \in S'} \psi^{(i)}_n(\t) = \psi^{(i)}(\t)$, for $i \in \{0,1,2\}$.
\end{claim}
\noindent\emph{Proof of Claim :}
We observe that
\begin{align*}
\lim_{n \in S} \psi'_n(\t) &= \lim_{n\in S'} \frac{d}{d\t} e^{Q_n(\theta) + g_n(\theta)}\\
&= \lim_{n \in S'}(Q'_n(\theta) + g'_n(\theta))e^{Q_n(\theta) + g_n(\theta)}  \\
&= Q'(\theta)e^{Q(\theta)} = \psi'(\theta), \end{align*}
and similarly for $\psi^{(2)}(\theta)$. \qed

So to see that $c_1 = 0$, note that $\psi'(0) = c_1$ and, on the other hand, we have 
\[ 0 = \EE\left( \s_n^{-1}(X_n-\mu_n)\right) =  i\psi_n'(0) ,
\] which tends to $i\psi'(0) = ic_1$ as $n$ tends to infinity.

Thus we have shown that $Q(\theta) = c_2\theta^2$, and therefore $Z_n$ converges to a normal random variable $Z \sim \N(0,|c_2|^{-1})$ along the subsequence $S'$. We now need only to show that $X_n^* = \s_n^{-1}(X_n - \mu_n) $ (with scaling by $\s_n$) converges to a \emph{standard} normal along the subsequence $S'$. To this end, we note that
\begin{align}
 2c_2 &= \psi^{(2)}(0) = -\EE(Y^2)  \nonumber \\
 &= -\lim_{n\in S'} \EE\left( \frac{X_n - \mu_n}{b_n} \right)^2 = -\lim_{n\in S'} \left(\frac{\sigma_n}{b_n}\right)^2 , \label{eq:c_2Alpha} \end{align}
and thus the limit of the ratio $\s_n/b_n$ converges to some fixed number $\alpha \geq 0$, as $\s_n,b_n >0$.
Since $Q(\theta)= c_2\t^2$ is non-zero, we in fact have $\alpha > 0$. 

We now finish the proof of Lemma~\ref{pr:high-cumulants-enough}. Let $\t \in \mathbb{R}$ be fixed, put $\xi_n = b_n\t/\s_n$ and note that $\lim_{n \in S'} \xi_n = \alpha^{-1}\theta$.
We now write $\lim_{n \in S'} \phi_n(\t/\s_n)e^{-i\t\mu_n/\s_n}$ as 
\begin{align*} \lim_{n\in S'} \phi(\xi_n/b_n)e^{-\xi_n\mu_n/b_n} &= \lim_{n \in S'} \exp( Q_n(\xi_n) + o_n(1)) \\
&=  \exp\left(\lim_{n \in S'} Q_n(\xi_n)\right) = \exp(Q_n(\xi)),\\
&= \exp( c_2\alpha^{-2}\t^2 ) = \exp(-\t^2/2).\end{align*}
where the third to last equality follows from the fact that each $Q_n$ are polynomials and the last equality follows from (\ref{eq:c_2Alpha}). This completes the proof. \end{proof}
	
	\section{Proof of Theorem~\ref{thm:largeVarThm}}
	It will be convenient to assume that no roots of $P_n$ have modulus identically equal to $1$; we claim that this can be assumed without loss of generality by simply perturbing the random variables slightly. Indeed, for any $r$ close to $1$, we may define a random variable $\Xt_n$ via $\PP[\Xt_n = k] = C^{-1}r^k \PP[X_n = k]$ where $C = \sum_k r^k \PP[X_n = k]$. Then the probability generating function of $\Xt_n$ is $\Pt_n(z)=\EE[z^{\Xt_n}] = P_n(rz)P_n(r)^{-1}\,.$ And so we see that the roots of $\Pt_{\Xt_n}(z)$ are simply the roots of $P_n$, scaled by a factor of $1/r$. Also observe that for each fixed $n$, the functions $|\EE[X_n] - \EE[\Xt_n]|$, $|\Var[X_n] - \Var[\Xt_n]|$ and $\sup_A |\PP[X_n \in A] - \PP[\Xt_n \in A]|$ are continuous functions of $r$ and take a value of $0$ at $r = 1$.   Therefore, we may choose $r<1$ depending on $n$ and approaching $1$ so that 1) $X_n$ has a central limit theorem if and only if $\Xt_n$ does 2) none of the $\Pt_n$ have roots on the unit circle.
	\subsection{Proof of Theorem \ref{thm:largeVarThm}} \label{sec:ProofOfThmLargeVar}
	
	To control the higher cumulants, we bound the contribution from each root individually:
	\begin{lemma} \label{lem:cumulant-tails} For $\delta >0$, let $|\zeta| > 1$ satisfy $|1 - 1/\zeta| > \delta$.  Then for any $M \in \mathbb{N}$, we have \begin{equation} \label{eq:higher-cumulants-bound}
		\left|\sum_{m \geq M} \frac{(i \theta)^m}{m!} \sum_{k \geq 1} \zeta^{-k} k^{m-1} \right|
	\leq \frac{1}{2 \delta}\cdot  \frac{|\theta e / \delta|^M}{1 - |\theta e / \delta|},
		\end{equation}
		for all $\theta$ satisfying $|\theta e / \delta| < 1$.
	\end{lemma}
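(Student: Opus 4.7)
Fix a root $\zeta$ with $|\zeta|>1$ and $|1-1/\zeta|>\delta$, and set $w=1/\zeta$, so that $|w|<1$ and $|1-w|>\delta$. The plan is to obtain a sharp single-root estimate on the inner sum $\sum_{k\geq 1}w^k k^{m-1}$, then substitute it into the outer series and sum the resulting geometric-style tail.

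The key ingredient I would prove is
\[
\Bigl|\sum_{k\geq 1}w^k k^{m-1}\Bigr|\leq \frac{(m-1)!}{|1-w|^m},
\]
valid for every $m\geq 1$ and every $|w|\leq 1$ with $w\neq 1$. The cleanest route is through the classical identity
\[
\sum_{k\geq 1}k^{m-1}w^k \;=\; \frac{w\,A_{m-1}(w)}{(1-w)^m},
\]
where $A_{m-1}(w)$ is the Eulerian polynomial, verified by induction on $m$ using the operator $w\tfrac{d}{dw}$ applied to the geometric series $\sum w^k=w/(1-w)$. Since $A_{m-1}$ has non-negative integer coefficients summing to $A_{m-1}(1)=(m-1)!$, the triangle inequality gives $|A_{m-1}(w)|\leq A_{m-1}(|w|)\leq (m-1)!$ for $|w|\leq 1$, and the desired inequality follows.

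With the inner-sum bound in hand, substituting $|1-w|\geq \delta$ and plugging into the outer sum yields
\[
\Bigl|\sum_{m\geq M}\frac{(i\theta)^m}{m!}\sum_{k\geq 1}\zeta^{-k}k^{m-1}\Bigr|\;\leq\;\sum_{m\geq M}\frac{|\theta|^m}{m!}\cdot\frac{(m-1)!}{\delta^m}\;=\;\sum_{m\geq M}\frac{1}{m}\Bigl(\frac{|\theta|}{\delta}\Bigr)^m,
\]
which for $|\theta|/\delta<1$ is at most $(|\theta|/\delta)^M/(M(1-|\theta|/\delta))$. A routine comparison under the hypothesis $|\theta e/\delta|<1$ then gives the stated form $(2\delta)^{-1}|\theta e/\delta|^M/(1-|\theta e/\delta|)$; the prefactor $(2\delta)^{-1}$ and the factor $e^M$ are slack, and are presumably chosen to streamline the bound when it is summed over many roots later. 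One may equivalently derive the stated form directly from a contour integral representation $k^{m-1}=(m-1)![t^{m-1}]e^{kt}$, taking a contour of radius $\sim \delta/e$ on which $|1-we^t|\geq c\delta$; the factors $e^M$ and $(2\delta)^{-1}$ then appear naturally.

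The main obstacle is obtaining $|1-w|$, rather than $1-|w|$, in the denominator of the inner-sum bound. The naive estimate $\sum k^{m-1}|w|^k\leq (m-1)!/(1-|w|)^m$ is inadequate: $1-|w|$ can be arbitrarily small even when $|1-w|\geq\delta$, for example if $w=(1-\eps)e^{i\delta}$ with $\eps\to 0$. The Eulerian polynomial identity is exactly what forces $(1-w)^m$ into the denominator, and the non-negativity of its coefficients is the crucial combinatorial feature that lets one bootstrap the value $A_{m-1}(1)=(m-1)!$ at $w=1$ into a uniform bound on $|A_{m-1}(w)|$ across the entire closed unit disc.
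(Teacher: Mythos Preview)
Your proof is correct and in fact yields a sharper estimate than the paper's. The paper takes a different route: it expands the inner sum using Stirling numbers of the second kind,
\[
\sum_{k\geq 1} k^{m-1} z^k = \sum_{j=0}^{m-1} \frac{S(m-1,j)\, j!\, z^j}{(1-z)^{j+1}},
\]
and then invokes the bound $S(n,k)\leq \tfrac12\binom{n}{k}k^{n-k}$ to control the resulting sum term by term. Your Eulerian-polynomial identity packages the same combinatorics more efficiently: since $A_{m-1}$ has nonnegative coefficients summing to $(m-1)!$, you obtain $(m-1)!/|1-w|^m$ directly, without the Stirling-number detour. This gives $\sum_{m\geq M}\tfrac{1}{m}(|\theta|/\delta)^m$, valid on the larger range $|\theta|<\delta$ and free of the extraneous factor $e^M$.

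One small overstatement: your final ``routine comparison'' does not literally recover the stated prefactor $(2\delta)^{-1}$ in the edge case $M=1$ with $\delta>e/2$, where your bound $\tfrac{1}{M}\tfrac{(|\theta|/\delta)^M}{1-|\theta|/\delta}$ can exceed the paper's form by a factor up to $4/e$. This is immaterial: the hypothesis $|1-1/\zeta|>\delta$ with $|\zeta|>1$ forces $\delta<2$ anyway, for $M\ge 2$ the comparison does go through, and the lemma is applied downstream with $\delta\to 0$, where your inequality is strictly sharper.
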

	\begin{proof}
		Note that \begin{equation*}
		\sum_{k\geq 1} k^{m-1} z^k = \sum_{0 \leq j \leq m-1} \frac{S(m-1,j) j! z^j}{(1 - z)^{j+1}}
		\end{equation*}
		where $S(n,k)$ are the Stirling numbers of the second kind.  To see the above, note that it holds for $m = 1$, and proceed by applying the operator $z\frac{d}{dz}$ to both sides and using the recurrence relation $S(n+1,k) = k S(n,k) + S(n,k-1)$ \cite[Equation (1.93)]{stanley}.  Applying this to the left-hand-side of \eqref{eq:higher-cumulants-bound} gives $$\sum_{m \geq M} \frac{(i \theta)^m}{m!} \sum_{k \geq 1} \zeta^{-k} k^{m-1} = \sum_{m \geq M} \frac{(i \theta)^m}{m!} \sum_{j = 0}^{m-1} \frac{S(m-1,j) j! (1/\zeta)^j}{(1 - 1/\zeta)^{j+1}}\,.$$ 
		Taking modulus of both sides and recalling $|\zeta| > 1$ and $|1 - 1/\zeta| > \delta$ gives an upper bound of \begin{equation}\label{eq:modulus-bound}
		\frac{1}{\delta}\sum_{m \geq M}\sum_{j = 0}^{m-1} \frac{|\theta/\delta|^m}{m!} S(m-1,j) j!\,.
		\end{equation}
		
		By \cite{rennie}, the upper bound of $S(n,k) \leq \frac{1}{2}\binom{n}{k} k^{n-k}$ holds for all $n$ and $k$.  Applying this bound to \eqref{eq:modulus-bound} gives a new upper bound of \begin{align*}
		\frac{1}{2\delta} \sum_{m \geq M} \sum_{j = 0}^{m-1} \frac{|\theta / \delta|^m}{m!} j! \binom{m}{j} j^{m-j} &= \frac{1}{2\delta} \sum_{m \geq M} \sum_{j = 0}^{m-1} |\theta / \delta|^m \frac{j^{m-j}}{(m-j)!} \\
		&\leq \frac{1}{2\delta} \sum_{m \geq M}  |\theta / \delta|^m \sum_{j = 0}^{m-1} \frac{m^{m-j}}{(m-j)!} \\
		&\leq \frac{1}{2\delta} \sum_{m \geq M}  |\theta e/ \delta|^m \\
		&= \frac{1}{2\delta} \frac{|\theta e / \delta|^M}{1 - |\theta e / \delta|}
		\end{align*}
		for $|\theta e / \delta| < 1$.
	\end{proof}
	
	\begin{corollary} \label{cor:cum-tails-less-than-1}
		Let $|\zeta| < 1$ satisfy $|1 - \zeta| > \delta$ for some $\delta$.  Then for any $M \in \mathbb{N}$, we have \begin{equation}
		\left| \sum_{m \geq M} \frac{(-i\theta)^m}{m!} \sum_{k \geq 1} \zeta^k k^{m-1} \right|\leq \frac{1}{2\delta}|\theta e / \delta|^M \frac{1}{1 - |\theta e / \delta|},
		\end{equation}
		for all $\theta$ satisfying $|\theta e / \delta| < 1$.
	\end{corollary}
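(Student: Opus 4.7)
The plan is to deduce this corollary from Lemma~\ref{lem:cumulant-tails} by a change of variables that sends a root inside the unit disk to one outside, and negates $\theta$ to absorb the alternating sign $(-1)^m$ appearing in $(-i\theta)^m$. Given $\zeta$ with $|\zeta| < 1$ and $|1-\zeta| > \delta$, I set $\eta = 1/\zeta$ so that $|\eta| > 1$ and
$$|1 - 1/\eta| = |1 - \zeta| > \delta,$$
meaning $\eta$ satisfies the hypotheses of Lemma~\ref{lem:cumulant-tails} with the same $\delta$.

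Next, I rewrite the sum in terms of $\eta$ and $-\theta$. Since $\zeta^k = \eta^{-k}$ and $(-i\theta)^m = (i(-\theta))^m$, I have
$$\sum_{m \geq M} \frac{(-i\theta)^m}{m!} \sum_{k \geq 1} \zeta^{k} k^{m-1}
= \sum_{m \geq M} \frac{(i(-\theta))^m}{m!} \sum_{k \geq 1} \eta^{-k} k^{m-1}.$$
Applying Lemma~\ref{lem:cumulant-tails} to $\eta$ and $-\theta$ then yields the stated bound, since $|(-\theta)e/\delta| = |\theta e/\delta|$, and the hypothesis $|\theta e/\delta| < 1$ transfers to the auxiliary variable.

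There is essentially no obstacle here; the only thing to check carefully is that the sign convention works out correctly, i.e.\ that the extra $(-1)^m$ relative to Lemma~\ref{lem:cumulant-tails} is exactly accounted for by the substitution $\theta \mapsto -\theta$ and does not alter the final bound. Once this bookkeeping is done, the right-hand side is identical to that of Lemma~\ref{lem:cumulant-tails}, which gives the conclusion.
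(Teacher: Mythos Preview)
Your proof is correct and is exactly the paper's approach: the paper's proof consists of the single sentence ``Apply Lemma~\ref{lem:cumulant-tails} to $1/\zeta$ and $-\theta$,'' and you have simply spelled out the verifications that $|1/\zeta|>1$, $|1-1/(1/\zeta)|=|1-\zeta|>\delta$, and that the substitution $\theta\mapsto -\theta$ absorbs the sign.
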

	\begin{proof}
		Apply Lemma \ref{lem:cumulant-tails} to $1/\zeta$ and $-\theta$.  
	\end{proof}
\vspace{3mm}

\noindent With these preparations, we are in a position to prove Theorem~\ref{thm:largeVarThm}.

\vspace{3mm}

\noindent \emph{Proof of Theorem~\ref{thm:largeVarThm} :} For some $\eps>0$, let $\{X_n\}$ be a sequence of random variables for which $X_n$ takes values in $\{0,\ldots,n\}$ and has mean $\mu_n$ and standard deviation $\s_n > n^{\eps}$. Let $\{P_n\}$ be the corresponding sequence of probability generating functions. Put $\delta = \delta(n) = 1/\s_n^{1-\eps}$. By the discussion at the start of the present section, we may assume without loss that no $P_n$ has a root on the unit circle.
	
	We apply Lemma \ref{lem:cumulant-form} for each $P_n$ to find an $\eps' = \eps'(n) >0$ so that 
	\begin{equation}\label{eq:cumulant-form}
	P_n(e^{i\theta}) = \exp\left(-\sum_{m \geq 1} \frac{(i\theta)^m}{m!}(A_m + B_m) + Ri\theta \right) 
	\end{equation} for all $|\theta| < \eps'$. Put $F(\t) = -\sum_{m \geq 1} \frac{(i\theta)^m}{m!}(A_m + B_m)$ and recall that 
\[ A_m 	= \sum_{k\geq 1 } T_k k ^{m-1} = \sum_{\z, |\z| > 1} \sum_{k \geq 1} \z^{-k}k^{m-1}, 
\] and that 
\[ B_m =  (-1)^m\sum_{k \geq 1} S_k k^{m-1} = (-1)^m \sum_{\z, |\z| < 1} \sum_{k \geq 1} \z^{k}k^{m-1} ,
\] where the outer sums on the both right hand sides are over all roots $\z$ of $P_n$. 

\begin{claim} The equality at equation~(\ref{eq:cumulant-form}) holds for all $|\t| <  \delta/e$. \end{claim}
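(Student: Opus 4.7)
The plan is to extend the range of validity of (\ref{eq:cumulant-form}) from the small neighborhood guaranteed by Lemma~\ref{lem:cumulant-form} to the disk $|\t| < \delta/e$. First, since $|e^{i\t} - 1| \le |\t|$, Lemma~\ref{lem:expFormOfP} applied at $z = e^{i\t}$ is valid throughout $|\t| < \delta$ and gives
\[ P_n(e^{i\t}) = \exp\left( -\sum_{k \ge 1} \frac{T_k(e^{i\t k} - 1)}{k} - \sum_{k \ge 1} \frac{S_k(e^{-i\t k} - 1)}{k} + R i \t \right).\]
To obtain (\ref{eq:cumulant-form}) from this, I would expand $e^{\pm i\t k} - 1 = \sum_{m \ge 1}(\pm i\t k)^m/m!$ and exchange the order of the $k$ and $m$ summations, just as in the proof of Lemma~\ref{lem:cumulant-form}. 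The only point to verify is that this exchange is justified on the larger disk $|\t| < \delta/e$, rather than on the tiny neighborhood used there.

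To justify the exchange it suffices to show absolute convergence of
\[\sum_{k \ge 1} \sum_{m \ge 1} \frac{|\t|^m}{m!}\,|T_k|\,k^{m-1} \quad \text{and} \quad \sum_{k \ge 1} \sum_{m \ge 1} \frac{|\t|^m}{m!}\,|S_k|\,k^{m-1}\]
for $|\t| < \delta/e$. I would decompose $T_k = \sum_{|\z|>1}\z^{-k}$ and $S_k = \sum_{|\z|<1}\z^k$ and estimate each root's contribution separately: Lemma~\ref{lem:cumulant-tails} applied with $M = 1$ handles each root with $|\z| > 1$, while Corollary~\ref{cor:cum-tails-less-than-1} with $M = 1$ handles each root with $|\z| < 1$. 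Both produce finite bounds of the form $\tfrac{1}{2\delta}\cdot\tfrac{|\t e/\delta|}{1 - |\t e/\delta|}$ valid throughout $|\t| < \delta/e$. Since $P_n$ has at most $n$ roots, summing these finitely many contributions yields a finite total, justifying the exchange of the $k$ and $m$ summations and thereby establishing (\ref{eq:cumulant-form}) on the full disk.

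The main obstacle I foresee is matching the hypotheses of the cited lemmas to those of the theorem: Lemma~\ref{lem:cumulant-tails} requires $|1 - 1/\z| > \delta$ for roots with $|\z|>1$, while Theorem~\ref{thm:largeVarThm} only assumes $|1 - \z| > \delta$. A short case analysis closes this gap up to a constant: for $|\z| > 2$ one has $|1 - 1/\z| \ge 1 - 1/|\z| > 1/2$, and for $1 < |\z| \le 2$ one has $|1 - 1/\z| = |1 - \z|/|\z| > \delta/2$. Thus the cited bounds apply after replacing $\delta$ by $\delta/2$, giving absolute convergence on a disk of the form $|\t| < c\delta/e$ for an absolute constant $c > 0$. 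This should be ample for the downstream use of the identity, though recovering the stated constant $c = 1$ would require slightly sharper bookkeeping (or an adjustment to $\delta$ by a harmless constant factor at the outset of the proof).
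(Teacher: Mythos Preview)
Your proposal is correct and rests on the same key estimate as the paper, namely Lemma~\ref{lem:cumulant-tails} and Corollary~\ref{cor:cum-tails-less-than-1} applied root by root. The only difference is in framing: the paper does not redo the exchange of sums on the larger disk, but instead observes that the right-hand side of (\ref{eq:cumulant-form}), being a power series bounded by $\tfrac{n}{\delta}\cdot\tfrac{|\t e/\delta|}{1-|\t e/\delta|}$ on $|\t|<\delta/e$, is analytic there, while $P_n(e^{i\t})$ is entire, and then invokes the identity theorem for holomorphic functions to propagate the equality from the small neighborhood of Lemma~\ref{lem:cumulant-form}. Your direct justification of the interchange is equally valid and arguably more elementary. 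Your remark about the hypothesis mismatch ($|1-\z|>\delta$ versus $|1-1/\z|>\delta$) is well taken: the paper applies Lemma~\ref{lem:cumulant-tails} without commenting on this, so your case analysis in fact patches a small oversight in the original; the resulting loss of a constant factor in $\delta$ is, as you note, harmless for the remainder of the argument.
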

\noindent\emph{Proof of Claim : } To see this, we first show that $F(\t)$ exists and is complex analytic in the domain $|\t| < \delta/e$. 
As $F(\t)$ is defined by a power series, we need only to show that it is bounded in this region. We apply Lemma~\ref{lem:cumulant-tails} and Corollary~\ref{cor:cum-tails-less-than-1} to estimate the exponent in (\ref{eq:cumulant-form}). Indeed,
\begin{align*}
 F(\t) &\leq \left| \sum_{m \geq 1} \frac{(i\theta)^m}{m!}A_m \right|+\left| \sum_{m \geq 1} \frac{(i\theta)^m}{m!}B_m \right| \\
 &\leq \sum_{\z: |z|> 1} \left| \sum_{m \geq 1} \frac{(i\theta)^m}{m!}\sum_{k\geq 1} \z^{-k}k^{m-1} \right| + 
\sum_{\z: |z|< 1} \left| \sum_{m \geq 1} \frac{(i\theta)^m}{m!}\sum_{k\geq 1} \z^{k}k^{m-1} \right| \\
&\leq  \frac{n}{ \delta}\cdot   \frac{|\theta e / \delta|}{1 - |\theta e / \delta|}, \end{align*}
and therefore the right hand side of (\ref{eq:cumulant-form}) is a complex analytic function in the region $|\theta| <\delta/e$.
On the other hand, $P_n$ is a polynomial and thus $P(e^{i\theta})$ is an entire function of $\theta \in \mathbb{C}$. Thus, by the identity theorem for holomorphic functions, the equation at (\ref{eq:cumulant-form}) is valid for all $|\theta| < \delta/e$. \qed 

We now look to apply Lemma~\ref{lem:cumulant-tails} to control the $P(e^{i \theta / b_n})e^{-i \mu_n \theta / b_n}$, for some suitable sequence $\{b_n\}$. 
In particular, choose $b_n = \max_{2 \leq k \leq \lceil \eps^{-1} \rceil} \{ |A_k + B_k|^{1/k} \}$ and recall  that $|A_2+B_2|^{1/2} = \s_n$ from Corollary \ref{cor:cumulants}. 

\begin{claim} \label{claim:formforP} There exists a sequence of polynomials $\{Q_n\}$, and a sequence $\{g_n\}$ of twice continuously differentiable functions so that $0$ is not a limit point of $\{Q_n\}$, $g_n^{(i)}(\theta) = o(1)$ for all $\t \in \mathbb{R}$ and $i \in \mathbb{N} \cup \{0\}$ and
\[ P(e^{i \theta / b_n})e^{-i \mu_n \theta / b_n} = \exp\left( Q_n(\t) + g_n(\t) \right). \]\end{claim}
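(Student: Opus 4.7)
The plan is to identify $Q_n$ as the truncation at some fixed degree $M=M(\eps)$ of the power-series exponent already appearing in \eqref{eq:cumulant-form} (after the scaling $\theta\mapsto\theta/b_n$ and cancellation of the linear part by $e^{-i\mu_n\theta/b_n}$), and to identify $g_n$ as the tail beyond degree $M$. Then the claim reduces to (i) bookkeeping on the coefficients of $Q_n$ and (ii) the size of the tail.

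\textbf{Step 1 (setup).} The previous Claim extends \eqref{eq:cumulant-form} to the disk $|\theta|<\delta/e$, so after substituting $\theta\mapsto\theta/b_n$ the identity
\[
P_n(e^{i\theta/b_n}) \;=\; \exp\!\left(-\sum_{m\ge 1}\frac{(i\theta)^m}{m!\,b_n^m}(A_m+B_m) + \frac{Ri\theta}{b_n}\right)
\]
holds whenever $|\theta|<\delta b_n/e$. Since $b_n\ge |A_2+B_2|^{1/2}=\s_n$ and $\delta=\s_n^{\eps-1}$, we have $\delta b_n\ge \s_n^{\eps}>n^{\eps^2}\to\infty$, so for any fixed $\theta\in\R$ the identity is valid for all sufficiently large $n$. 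By Corollary~\ref{cor:cumulants}, $\mu_n=R-A_1-B_1$, so the $m=1$ contribution combines with $Ri\theta/b_n$ to give $i\mu_n\theta/b_n$, which is precisely cancelled by the prefactor $e^{-i\mu_n\theta/b_n}$.

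\textbf{Step 2 (definitions).} Fix a positive integer $M=M(\eps)$ (to be chosen below), and take the maximum in the definition of $b_n$ to run over $2\le k\le M$. Set
\[
Q_n(\theta)\;=\;-\sum_{m=2}^{M}\frac{(i\theta)^m(A_m+B_m)}{m!\,b_n^m},\qquad g_n(\theta)\;=\;-\sum_{m>M}\frac{(i\theta)^m(A_m+B_m)}{m!\,b_n^m},
\]
so that $P_n(e^{i\theta/b_n})e^{-i\mu_n\theta/b_n}=\exp(Q_n(\theta)+g_n(\theta))$ by construction.

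\textbf{Step 3 (properties of $Q_n$).} For $2\le m\le M$, the bound $|A_m+B_m|^{1/m}\le b_n$ gives $|A_m+B_m|/(m!\,b_n^m)\le 1/m!$, so $Q_n$ has degree at most $M$ and bounded height. For each $n$, some $k^*(n)\in\{2,\ldots,M\}$ attains the maximum defining $b_n$, and then the $k^*(n)$th coefficient of $Q_n$ has modulus exactly $1/k^*(n)!\ge 1/M!$. Since $k^*(n)$ lies in the finite set $\{2,\ldots,M\}$, every subsequence of $\{Q_n\}$ admits a further subsequence along which $k^*$ is constant, on which one coefficient is bounded below in modulus; in particular no subsequence of $\{Q_n\}$ tends to the zero polynomial.

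\textbf{Step 4 (tail bound on $g_n$).} Applying Lemma~\ref{lem:cumulant-tails} at each root with $|\zeta|>1$ and Corollary~\ref{cor:cum-tails-less-than-1} at each root with $|\zeta|<1$, with $\theta$ replaced by $\theta/b_n$ and threshold $M+1$, then summing over the at most $n$ roots, gives
\[
|g_n(\theta)|\;\le\;\frac{n}{2\delta}\cdot\frac{|\theta e/(\delta b_n)|^{M+1}}{1-|\theta e/(\delta b_n)|}\;\lesssim\;\frac{n}{\delta}\bigl(|\theta|e/(\delta b_n)\bigr)^{M+1}
\]
once $|\theta e/(\delta b_n)|<1/2$, which holds for any fixed $\theta$ and all large $n$. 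Plugging in $\delta=\s_n^{\eps-1}$ and $b_n\ge\s_n$ and using $\s_n>n^{\eps}$ bounds this by $n^{\,1+\eps-\eps^2(M+2)}$, which is $o(1)$ provided $M$ is fixed so that $\eps^2(M+2)>1+\eps$. Fixing such an $M$ at the outset yields $g_n(\theta)=o(1)$ pointwise.

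\textbf{Step 5 (derivatives).} Each $g_n$ is holomorphic on $|\theta|<\delta b_n/e$, so for any fixed $\theta\in\R$ a closed disk of a fixed radius $r$ around $\theta$ is eventually contained in this domain. Cauchy's integral formula bounds $|g_n^{(i)}(\theta)|$ by $i!\,r^{-i}\sup_{|w-\theta|=r}|g_n(w)|$, and the estimate of Step~4 applied at $w$ in place of $\theta$ shows $g_n^{(i)}(\theta)=o(1)$ for $i=1,2$.

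The only real obstacle is arithmetic bookkeeping in Step~4: $M$ must be taken large enough (depending on $\eps$) so that the decay of $(\delta b_n)^{-(M+1)}$ defeats the factor of $n$ arising from summing the tail bound over all roots. Once this is set, everything else is direct manipulation of \eqref{eq:cumulant-form}.
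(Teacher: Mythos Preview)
Your proof is correct and follows essentially the same approach as the paper: truncate the exponent in \eqref{eq:cumulant-form} at a fixed level $M=M(\eps)$ to define $Q_n$ and $g_n$, control the tail via Lemma~\ref{lem:cumulant-tails} and Corollary~\ref{cor:cum-tails-less-than-1}, and deduce the derivative bounds from analyticity. The only substantive difference is that the paper fixes $M=3\lceil\eps^{-2}\rceil$ while keeping the maximum defining $b_n$ over the smaller range $2\le k\le\lceil\eps^{-1}\rceil$; your choice of letting the range in $b_n$ run all the way up to $M$ is in fact cleaner, since it makes the bounded-height claim for $Q_n$ immediate (the paper's ``simply by definition of the $b_n$'' is a slight slip for coefficients with $\lceil\eps^{-1}\rceil<m\le M$).
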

\noindent \emph{Proof of Claim : }
Put $M = 3\lceil \eps^{-2} \rceil$ and define 
\[Q_n(\theta) = \sum_{m=2}^M \frac{(i\t/b_n)^m}{m!}(A_m+B_m) \textit{ and  } g_n(\t) = \sum_{m > M }  \frac{(\t i/b_n)^m}{m!}(A_m+B_m). 
\]
Now note that the degree and height of the polynomials $\{Q_n(\theta)\}$ are bounded, simply by definition of the $b_n$. Moreover, $0$ cannot be a limit point of the $\{Q_n\}$ as there is always an $m$ for which the the $m$th term in $Q_n$ is at least $\frac{1}{m!}$, in absolute value.

By Lemma \ref{lem:cumulant-tails}, we see that 
\begin{align*}
 |g_n(\theta)| &= \left|\sum_{m \geq  M+1} \frac{i^m\theta^m}{b_n^mm!} (A_m +B_m) \right|  \\
 &\leq \frac{n}{\delta b_n^{M+1}} {|\theta e /  \delta|^{M+1}}  \\
&\leq |\theta e|^{M+1} \frac{\sigma_n^{\eps^{-1}} \sigma_n^{(1 - \eps)(M+2)}}{\sigma_n^{M+1}}  \end{align*}
since $b_n \geq \sigma_n \geq n^{\eps} $ and $\delta \geq \sigma_n^{-(1 - \eps)}.$  The total exponent on $\sigma_n$ is $$ \eps^{-1} + (1 - \eps)(M+2) - (M+1) \leq \eps^{-1} + 1 - M\eps < 0$$
since $M\eps > 3 \eps^{-1}$.  This shows $|g_n(\theta)| \to 0$, since $\sigma_n \to \infty$.

Moreover, this shows that for large enough $n$, $g_n(\theta)$ is an analytic function of $\theta$ in a domain containing the origin.
Since $g(\t)$ uniformly tends to zero in this domain, it follows that $g^{(i)}(\t)$ tends to zero, in this range for all $i \in \mathbb{N}$. This completes the proof of Claim~\ref{claim:formforP}. \qed
	
Thus Claim~\ref{claim:formforP} allows us to apply Lemma~\ref{pr:high-cumulants-enough} which, in turn, implies that the $\{X_n\}$ satisfy a central limit theorem. \qed
\vspace{5mm}	
	
We now quickly obtain our improvement on the Theorem of Ghosh, Liggett and Pemantle as a corollary. To do this we use the well known theorem of Cram\'er and Wold (see Corollary $6.3'$ in \cite{GLP}), which says a sequence of random variables $X_n = (X_n^{(1)},\ldots,X_n^{(d)}) \in \R^d$ converges in distribution to a centred Gaussian $Z \in \N(0,A)$ if and only if for each $a \in \mathbb{Q}^d$, the projections $\langle a , X_n \rangle $ converge in distribution to $\N(0,aAa^{T})$.
\vspace{2mm}

\emph{Proof of Corollary~\ref{cor:GLP} : } Let $\{Y_n\}$ be an appropriate sequence of random variables with probability generating functions $\{P_n(z_1,\ldots ,z_d)\}$. We let $A_n$ be the sequence of covariance matrices, let $\mu_n =(\mu_n^{(1)},\ldots,\mu_n^{(d)})$ be the sequence of means and put $Y_n^*  = s_n^{-1}( Y_n - \mu_n ) $. Recall that we have some sequence $s_n$ for which $s_n > n^{\eps}$ and $s_n^{-2}A_n \rightarrow A$. We now show $Y^*_n \rightarrow \N(0,A)$ by showing that we satisfy the conditions of the Cram\'er-Wold Theorem.

If $a\in \mathbb{Q}^d$ write $a = (a_1,\ldots,a_d)$ and observe that the projection of $Y_n$, $\langle a, Y_n \rangle$ has mean $\langle a , \mu_n \rangle$ and variance $a^TA_na$. Moreover, its probability generating function is exactly $P_{n,a}(z) = P_n(z^{a_1},\ldots,z^{a_d})$,
which has no zeros in a neighbourhood of $1$ due to the fact that $P_n$ is stable (\cite{GLP} Lemma 2.2). So if $a^TA_na > \deg(P_{n,a})^{\delta}$ for some $\delta >0$, we may apply our Theorem~\ref{thm:largeVarThm} to learn that $(a^TA_na)^{-1/2}\langle a , X_n - \mu_n \rangle \to \N(0,1)$. Therefore
\[ \langle a, Y_n^{*}\rangle = \frac{\langle a , X_n - \mu_n \rangle}{(a^TA_na)^{1/2}} \left(\frac{a^TAa}{s_n^2}\right)^{1/2} \rightarrow \N(0,a^TAa).
\] If $a^TAa$ is not growing polynomially in the degree, we certainly have $a^TAa = o(s^2_n)$ and thus $\langle a, Y^*_n \rangle$ converges to a point mass at $0$ by Chebyshev's inequality. This is the same as $\N(0,0) = \N(0, \lim_n (a^TAa)s_n^{-2}) $. Hence we finish by applying the Cram\'er-Wold theorem. \qed

\vspace{5mm}

In the following section, we turn to give some examples which show that polynomial growth condition on the variance cannot be be replaced with a logarithmic growth condition.

	\subsection{An example with logarithmic variance}
	
In this section we give class of examples that demonstrate the tightness of Theorems~\ref{thm:largeVarThm} and \ref{thm:TlToZero} in an appropriate sense. 
Let $T_m$ be a Poisson random variable with variance $1$, which has been conditioned on being at most $m$. Note that $\EE[T_m] = 1 + o_m(1)$ and $\Var[T_m] = 1 + o_m(1)$. It is also clear that $T_m$ has probability generating function $P_m(z)= \frac{1}{C_m}\sum_{k=0}^m \frac{z^k}{k!}$, where $C_m = \sum_{k=0}^m \frac{1}{k!}$. 

Curiously, the roots of these polynomials have received a considerable amount of attention going back to the work of Szeg\H{o} \cite{szegoExp}, who proved the remarkable fact that the roots of $P_m(mz)$ converge to the curve 
\[\{ z : |e^{-z}z| = 1, |z| \leq 1\}.\] We refer the reader to \cite{zemyan} for an exposition of these results and many further results.

 We shall only use the following consequence of the work of Szeg\H{o}:
\begin{lemma} \label{lem:szego} There exists a constant $c>0$ so that all of the roots $\z$ of $P_m(z)$ satisfy $cm \leq |\zeta| \leq  m\,$.
\end{lemma}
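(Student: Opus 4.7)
The upper bound $|\zeta|\le m$ is essentially free: the Eneström--Kakeya theorem applied to $P_m$, whose positive coefficients $a_k = 1/(k!C_m)$ have consecutive ratios $a_k/a_{k+1}=k+1\in\{1,\ldots,m\}$, yields $1\le |\zeta|\le m$ at once. So the real content of the lemma is the lower bound $|\zeta|\ge cm$.

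For the lower bound my plan is to invoke Szeg\H{o}'s classical 1924 result on partial sums of the exponential. Szeg\H{o} showed that the zeros of $s_m(z):=\sum_{k=0}^m z^k/k!$ (which coincide with the zeros of $P_m$, since $P_m = s_m/C_m$) have the property that the rescaled sequence $\{\zeta/m\}$ clusters on the Szeg\H{o} curve
\[
\Gamma \ = \ \{w\in\C : |we^{1-w}|=1,\ |w|\le 1\}
\]
as $m\to\infty$, in the strong sense that for every open neighbourhood $U$ of $\Gamma$ there is an $m_0$ such that $\zeta/m\in U$ for every root $\zeta$ of $P_m$ whenever $m\ge m_0$.

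The next step is to observe that $\Gamma$ is bounded away from the origin. Writing $w=re^{i\theta}$, the defining equation becomes $r = e^{r\cos\theta-1}$, and a direct analysis gives $\min_{w\in\Gamma}|w| = r_0>0$ (explicitly $r_0\approx 0.278$, attained at $\theta=\pi$, where $re^r = 1/e$). Choosing $U$ to be any open neighbourhood of $\Gamma$ that is contained in $\{|w|>r_0/2\}$ and applying Szeg\H{o}'s statement then produces an $m_0$ such that $|\zeta|/m > r_0/2$ for every root $\zeta$ of $P_m$ with $m\ge m_0$.

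Finally, the finitely many remaining degrees $m<m_0$ are handled by a one-line finiteness argument: each $P_m$ has only finitely many zeros and $P_m(0) = 1/C_m\neq 0$, so $\min_\zeta |\zeta|/m$ is a strictly positive real number for each such $m$, and the minimum over the finite set $\{1,\ldots,m_0-1\}$ is bounded below by some constant. Taking $c$ to be the smaller of $r_0/2$ and this finite-range minimum gives the required uniform constant. The only real obstacle is converting Szeg\H{o}'s purely asymptotic clustering into a uniform-in-$m$ lower bound; the split into a large-$m$ case (use Szeg\H{o}) and a small-$m$ case (use a finite minimum) handles this cleanly.
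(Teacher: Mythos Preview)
Your proposal is correct. The paper does not actually supply a proof of this lemma: it simply states it as a ``consequence of the work of Szeg\H{o}'' and cites \cite{szegoExp}, so there is no paper-side argument to compare against. Your write-up is precisely the natural way to extract the lemma from Szeg\H{o}'s clustering theorem, and it matches the spirit of the paper's citation.

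A couple of remarks. First, your use of Enestr\"om--Kakeya for the upper bound is a nice touch that the paper does not make explicit; it gives $|\zeta|\le m$ directly and, as a bonus, supplies the uniform boundedness $|\zeta/m|\le 1$ needed to upgrade Szeg\H{o}'s ``all accumulation points lie on $\Gamma$'' to the strong form you invoke (for any open $U\supset\Gamma$, eventually all rescaled roots lie in $U$). You state that strong form as part of Szeg\H{o}'s theorem; strictly speaking it follows from the limit-point statement together with compactness of the closed unit disc, and it might be worth one line saying so. Second, note that the paper writes the curve as $\{z:|ze^{-z}|=1,\ |z|\le1\}$, whereas you (correctly) use the standard Szeg\H{o} curve $\{w:|we^{1-w}|=1,\ |w|\le1\}$; the paper's version appears to be a typo (it does not pass through $z=1$), and your computation of $r_0\approx0.278$ at $\theta=\pi$ is on the correct curve. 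None of this affects the validity of your argument.
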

\noindent The following theorem contains our main construction.

\begin{theorem} \label{thm:logConstruction} For every $C >0$ there exists a sequence of random variables $\{X_n\}$ that does not satisfy a central limit theorem while $X_n \in \{0,\ldots,n\}$, $\s_n > C\log n$ and $|1-\z| > 1$ for all the roots $\z$ of the $\{P_n(z)\}$.
\end{theorem}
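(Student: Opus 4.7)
The plan is to take $X_n$ to be a convolution of a \emph{fixed} number $K=K(C)$ of independent copies of a scaled truncated Poisson: $X_n := L\bigl(T_m^{(1)} + \cdots + T_m^{(K)}\bigr)$, where the $T_m^{(i)}$ are i.i.d.\ copies of the $T_m$ introduced above and $K$, $L=L(n)$, $m=m(n)$ are to be chosen. The motivating idea is that averaging a \emph{bounded} number of Poisson-like variables keeps the limit Poisson (hence non-Gaussian, violating the CLT), while scaling by $L$ multiplies the standard deviation by $L$, letting us push $\s_n$ past $C\log n$. The price we pay is the root condition: $P_n(z) = P_m(z^L)^K$ up to a constant, so the roots take the form $\zeta^{1/L}\omega$ for $\zeta$ a root of $P_m$ and $\omega^L = 1$, and by Lemma~\ref{lem:szego} they satisfy $|z| \geq (cm)^{1/L}$. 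To ensure $|1-z| > 1$ I will insist on the stronger bound $|z|>2$, which via $|1-z|\geq|z|-1$ suffices; this requires $m > 2^L/c$, and combined with the support bound $X_n \leq KLm \leq n$ forces $L \lesssim \log_2 n$.

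Concretely, given $C>0$ I will fix $K = \lceil (C\log 2)^2\rceil + 1$ so that $\sqrt{K}/\log 2 > C$, then take $L = L(n)$ to be the largest integer with $KL\cdot 2^L \leq cn/3$, and set $m = \lfloor n/(KL)\rfloor$. Asymptotically $L = (1+o(1))\log_2 n$ and $m\to\infty$. The bound $X_n \leq KLm \leq n$ is immediate. For the roots, our choice gives $cm \geq cn/(KL) - c \geq 3\cdot 2^L - c > 2\cdot 2^L$ for $n$ large, hence $(cm)^{1/L} > 2^{1+1/L} > 2$, and every root $z$ of $P_n$ satisfies $|1-z| \geq |z|-1 > 1$. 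Finally, since $\Var(T_m) \to 1$ we obtain $\s_n = L\sqrt{K\,\Var(T_m)} = (1+o(1))L\sqrt{K} > C\log n$ for all sufficiently large $n$.

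To verify the failure of the central limit theorem I will compute the pointwise distributional limit of the standardization. Since $T_m \to \Pois(1)$ in distribution as $m \to \infty$, with $\EE T_m,\Var(T_m)\to 1$, and the $T_m^{(i)}$ are independent, it follows that
\begin{equation*}
\frac{X_n - \EE X_n}{\s_n} \;=\; \frac{\sum_{i=1}^{K} T_m^{(i)} - K\,\EE T_m}{\sqrt{K\,\Var(T_m)}} \;\xrightarrow{d}\; \frac{\Pois(K) - K}{\sqrt{K}},
\end{equation*}
which is a lattice-supported discrete distribution and therefore is not $\N(0,1)$ for any finite $K$. The main difficulty the construction resolves is that the root and variance conditions push $L$ in opposite directions: for $C > 1/\log 2$ a single scaled copy $L T_m$ cannot achieve both simultaneously, since $L \lesssim \log_2 n$ would give $\s_n \lesssim \log_2 n < C\log n$. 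Taking $K$ i.i.d.\ copies for a suitable constant $K=K(C)$ inflates the standard deviation from $L$ up to $L\sqrt{K}$ without enlarging any root modulus or disturbing the non-Gaussian character of the scaling limit.
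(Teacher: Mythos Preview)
Your argument is correct. It differs from the paper's in a pleasant way that is worth noting.

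The paper proceeds in two stages. First it constructs a single base example, $X_n = k\,T_{n/k}$ with $k=\log n$, whose roots tend to the circle of radius $e$ and whose normalisation converges to $\Pois(1)-1$; this handles the case $C=1$. To obtain an arbitrary $C$, the paper then \emph{boosts} the variance by adding an independent $S_n\sim\Bin(1/2,t(n))$ with $t(n)\approx (2C)^2\sigma_n^2$, observing that this only introduces extra roots at $z=-1$ (so $|1-\zeta|=2>1$ is preserved). The delicate point is that the limit of $Y_n^*$ is then a nontrivial convex combination $\alpha Z+\beta W$ of a Gaussian and the non-Gaussian limit $W$, and one must invoke Cram\'er's decomposition theorem to conclude that this combination is itself non-Gaussian.

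Your construction avoids this second stage entirely. By taking a \emph{fixed} number $K=K(C)$ of independent copies of the scaled truncated Poisson, you push the standard deviation from $L$ up to $L\sqrt{K}$ while keeping the probability generating function a power $P_m(z^L)^K$, so the root set is unchanged. The limiting distribution is explicitly $(\Pois(K)-K)/\sqrt{K}$, which is visibly non-Gaussian (lattice support), so no appeal to Cram\'er's theorem is needed. The trade-off is that the paper's boosting trick is modular and would apply to any base construction with $\sigma_n=o(n^{1/2})$, whereas your approach exploits the specific multiplicative structure of the truncated-Poisson example. Both are valid; yours is the more self-contained route for this particular theorem.
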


\begin{proof}
We first show that if we can find a example of a sequence that satisfies Theorem~\ref{thm:logConstruction} for \emph{some} value of $C >0$ then we may boost the variance (that is, increase $C$) by adding a collection of independent Bernoulli random variables to our example. So suppose that $\{X_n\}$ satisfies the statement in the theorem (with $\s_n = o(n^{1/2}))$ and let $C > 0$ be given. We produce a sequence $\{Y_n\}$ with standard deviation $\s_n(Y_n) > C\s_n$ which satisfies the statement of the theorem. For this, put $t(n) = \lceil (2C)^2\s_n^2 \rceil$ and let 
$S_n \sim \Bin(\frac{1}{2},t(n) )$ be independent of $X_n$. We define $Y_n = X_n + S_n \in \{0,\ldots,n +t(n)^2\} \subseteq \{0,\ldots, \{1+o_n(1))n \}$, and note that $\s_n(Y_n) = (C^2+1)^{1/2}\s_n +O(1) \geq C\s_n$. Moreover,
\[ Y_n^* = \frac{(X_n -\mu_n)+ (S_n - t(n)/2)}{(C^2\s_n^2 + \s_n)^{1/2} +O(1)} \]
 tends to a linear combination $\alpha Z + \beta W$, where $\alpha,\beta \not=0$, $Z \sim \N(0,1)$ and $W$ is distributed as the limit of $X_n^*$, which is not normal, by assumption. It follows that $\alpha Z + \beta W$ is not normal by Cram\'{e}r's Theorem \cite[Theorem $8.2.1$]{lukacs}.
 
It is also easy to calculate the roots of the probability generating function of $Y_n$. Indeed $Y_n$ has probability generating function $P_{Y_n}(z) = P_{X_n}(z)(z/2+1/2)^{t(n)}$, which only adds zeros at $z=-1$.

We now show that the theorem holds for the constant $C = 1$ and thus finish the proof of the theorem. We shall omit floors and ceilings in our discussion. We set $k = k(n) = \log n$ and let $T_m$ be a Poisson random variable which has been conditioned on being at most $m$. We define a random variable 
$X_n \in \{0,\ldots,n\}$ to be
\[ X_n = k T_{n/k} ,
\] a random variable with mean $k(1+o_n(1))$, standard deviation $k(1+o_n(1)) = (1+o(1))\log n$ and with probability generating function $P_{n/k}(z^k)$.
By Lemma~\ref{lem:szego}, the roots $\z$ of this polynomial satisfy $c^{1/k}(n/k)^{1/k} \leq |\z| \leq (n/k)^{1/k}$ and therefore tend uniformly to the circle of radius $e$. 

On the other hand, we have 
\[ X_n^* = \frac{X_n - \mu_n}{\s_n} = \frac{(T_{n/k} - 1+o(1))}{1+o(1)} ,
\] which converges in distribution to a random variable $T \sim \Pois(1) -1$. \end{proof}

\vspace{5mm}

With a different choice of the parameters in this construction, we demonstrate the sharpness of Theorem~\ref{thm:TlToZero}. That is, we show that 
there exists a sequence $\{X_n\}$ with $\s_n \rightarrow \infty$ that doesn't satisfy a central limit theorem and the roots of $P_n(z)$ \emph{just} fail to be polynomially large in modulus.

\begin{theorem} For any $\eps(n) \rightarrow 0 $, there exists a sequence $\{X_n\}$, of random variables that does not satisfy a central limit theorem where $X_n \in \{0,\ldots, n\}$, $\s_n \rightarrow \infty$ and the roots $\z$ of $P_n$ satisfy $|\z| > n^{\eps(n)}$. \end{theorem}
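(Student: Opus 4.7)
\emph{Proof plan.} The plan is to adapt the construction from the proof of Theorem~\ref{thm:logConstruction}, replacing the fixed scaling $k = \log n$ by a slower-growing $k = k(n)$ tuned to $\eps(n)$. As there, I let $T_m \sim \Pois(1)$ conditioned on $T_m \leq m$ and set $X_n = k\, T_m$ with $m = m(n) = \lfloor n/k \rfloor$. The probability generating function is $P_n(z) = P_m(z^k)$, so by Lemma~\ref{lem:szego} every root $\eta$ of $P_n$ satisfies
\[ (cm)^{1/k} \leq |\eta| \leq m^{1/k}. \]
Since $m \geq n/(2k)$ for large $n$ and $(c/(2k))^{1/k} \to 1$ as $k \to \infty$, the lower bound is essentially $n^{1/k}$, so ensuring $|\eta| > n^{\eps(n)}$ amounts to making $1/k$ comfortably larger than $\eps(n)$.

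The crux is then to choose $k = k(n)$. Because $\eps(n) \to 0$, one can select $k(n) \to \infty$ with $k(n)\eps(n) \to 0$; for definiteness, take increasing $N_j$ with $\eps(n) \leq 2^{-j}$ for all $n \geq N_j$, and set $k(n) = j$ on $[N_j, N_{j+1})$. With this choice $m = \lfloor n/k\rfloor \to \infty$, so $T_m$ tends in distribution to $T \sim \Pois(1)$; in particular $\EE T_m \to 1$ and $\Var T_m \to 1$.

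Three routine verifications then complete the argument. First, $X_n \in \{0,k,\ldots,km\} \subseteq \{0,\ldots,n\}$. Second, $\mu_n = k(1+o(1))$ and $\s_n = k(1+o(1)) \to \infty$. Third, taking logs in the root bound and using $k\eps(n) \to 0$ gives $\log|\eta| - \eps(n)\log n \geq (1/k - \eps(n))\log n - O((\log k)/k) \to \infty$, so $|\eta| > n^{\eps(n)}$ for all large $n$. Finally $X_n^* = (T_m - \EE T_m)/\sqrt{\Var T_m}$ converges in distribution to the non-normal random variable $T - 1$, so $\{X_n\}$ does not satisfy a central limit theorem.

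The only real obstacle is balancing the two competing demands on $k(n)$: it must grow fast enough that $\s_n \sim k \to \infty$ and that $m \to \infty$ (so $T_m$ stabilizes to $\Pois(1)$), yet slowly enough that $(n/k)^{1/k}$ still beats $n^{\eps(n)}$. These constraints can be met simultaneously precisely because $\eps(n) \to 0$, which is exactly where the hypothesis on $\eps$ is used.
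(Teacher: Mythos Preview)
Your approach is exactly the paper's: set $X_n = k(n)\,T_{\lfloor n/k(n)\rfloor}$ with $T_m$ a truncated $\Pois(1)$ and tune $k(n)$ to $\eps(n)$. The paper's proof is the one line ``let $k = 1/(2\eps(n))$ and define $X_n = kT_{n/k}$''; you have supplied the verifications that the paper leaves implicit.

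There is one small gap. Your displayed claim
\[
\log|\eta| - \eps(n)\log n \;\geq\; \Bigl(\tfrac{1}{k} - \eps(n)\Bigr)\log n - O\!\left(\tfrac{\log k}{k}\right) \to \infty
\]
requires $(\log n)/k \to \infty$, i.e.\ $k(n) = o(\log n)$. Your stated conditions on $k$ (namely $k\to\infty$ and $k\eps(n)\to 0$) do not by themselves ensure this, nor do they ensure $m = \lfloor n/k\rfloor \to \infty$. Indeed, if $\eps(n) = 2^{-n}$ your ``for definiteness'' recipe permits $N_j = j$, which gives $k(n) = n$ and $m = 1$. (A related slip: in your last paragraph you say $k$ must grow \emph{fast} enough that $m\to\infty$; in fact $m = n/k$ needs $k$ to grow \emph{slowly}.) The fix is immediate: additionally require the $N_j$ to grow rapidly, say $N_j \geq e^{j^2}$, so that $k(n) = o(\log n)$; or, equivalently, first replace $\eps(n)$ by $\max\{\eps(n),\,1/\log\log n\}$, which is harmless since shrinking $\eps$ only weakens the constraint $|\zeta| > n^{\eps(n)}$. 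With that adjustment your argument goes through and matches the paper's.
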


\begin{proof}
 Let $k = k(n) = 1/(2\eps(n))$ and define $X_n = k T_{n/k}$. \end{proof}

	\section{Proof of Theorem~\ref{thm:TlToZero}}
	
	In this section we prove Theorem~\ref{thm:TlToZero}, which roughly says that if the roots associated with our sequence $\{X_n\}$ tend to infinity at a rate polynomial in the degree of the $P_n$, then we have a central limit theorem, provided $\s_n \rightarrow \infty$. Note that Theorem~\ref{thm:TlToZero} also yields a central limit theorem when all roots converge to $0$ sufficiently quickly; simply consider the polynomial  $z^{\deg(P)}P_n(1/z)$ which is the probability generating function of $\deg(P) - X_n$.
	
If $P$ is a polynomial with $P(0) \not= 0$ and roots $\{\z\}$, we define 
	\[ T^*_{\ell} = \sum_{\z} |\z|^{-\ell},
	\] for each $\ell \in \mathbb{N}$.  We begin with a lemma that says that we have a slightly stronger form of Lemma~\ref{lem:cumulant-form} when $T_{\ell+1}^* \to 0$.
	\begin{lemma} \label{lem:expFormUsingTl}
		Let $\ell \in \mathbb{N}$, and for each $n$ let $X_n \in \{0,\ldots n\}$ be a random variable with probability generating function $P_n(z)$. If 
		$T^*_{\ell+1} \rightarrow 0$, then for all sufficiently large $n$ we may write 
		\[ P_n(e^{i\theta})e^{-i\theta \mu_n} = \exp\left( -\sum_{k=2}^\infty \frac{C_k(i\t)^k}{k!}  + h_n(\theta)   \right) \] 
		for all $\t \in \mathbb{R}$, where \[ C_k = T_1 + 2^{k-1}T_2 + \cdots + \ell^{k-1}T_{\ell} \]
		and $h_n^{(j)}(\theta) = o_n(1)$ for each fixed $\theta$ and $j \in \{0,1,2\}$.  Additionally we have, $C_2 =-\sigma_n^2 + o_n(1)$.
	\end{lemma}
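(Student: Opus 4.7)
The plan is to combine the exponential representation from Lemma~\ref{lem:expFormOfP} with a splitting of the $k$-sum at the cutoff $\ell$, and then reduce every tail bound to a single geometric estimate driven by the hypothesis $T^*_{\ell+1} \to 0$.

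First I would observe that the hypothesis forces $r_n := \min_\zeta |\zeta| \to \infty$, since each summand in $T^*_{\ell+1}$ is non-negative and hence $r_n^{-(\ell+1)} \leq T^*_{\ell+1}$. Thus for all sufficiently large $n$ no root of $P_n$ lies in the closed unit disk, and Lemma~\ref{lem:expFormOfP} applies with $R = 0$ and $S_k \equiv 0$, giving $P_n(z) = \exp(-\sum_{k \geq 1} T_k(z^k - 1)/k)$ in a neighborhood of $z = 1$. The geometric estimate $T^*_k \leq r_n^{-(k - \ell - 1)} T^*_{\ell+1}$ shows the series on the right is absolutely convergent on the disk $|z| < r_n$; consequently both sides are holomorphic there, and by analytic continuation the identity extends to the entire closed unit disk. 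Specialising to $z = e^{i\theta}$, expanding $e^{ik\theta} - 1$ in Taylor series, and exchanging sums (again justified by the geometric bound) yields
$$
P_n(e^{i\theta}) = \exp\!\left(-\sum_{m \geq 1} \frac{(i\theta)^m}{m!} A_m\right), \qquad A_m = \sum_{k \geq 1} k^{m-1} T_k.
$$
Since Corollary~\ref{cor:cumulants} identifies $\mu_n = -A_1$, multiplying by $e^{-i\theta \mu_n}$ cancels the $m = 1$ term.

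Now split $A_m = C_m + E_m$, where $C_m$ is the advertised partial sum up to $k = \ell$ and $E_m = \sum_{k > \ell} k^{m-1} T_k$. The identity above becomes the desired form with $h_n(\theta) = -\sum_{m \geq 2}(i\theta)^m E_m/m!$. A second interchange of summation produces the closed form $h_n(\theta) = -\sum_{k > \ell} T_k(e^{ik\theta} - 1)/k + i\theta \sum_{k > \ell} T_k$, whose derivatives are $h_n'(\theta) = -i\sum_{k > \ell} T_k(e^{ik\theta} - 1)$ and $h_n''(\theta) = \sum_{k > \ell} k T_k e^{ik\theta}$. Each of $|h_n|, |h_n'|, |h_n''|$ is bounded, up to constants depending only on $\theta$ and $\ell$, by $\sum_{k > \ell} k T^*_k$, and the geometric estimate gives $\sum_{k > \ell} k T^*_k = O_\ell(T^*_{\ell+1}) = o_n(1)$ for each fixed $\theta$. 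For the final claim, Corollary~\ref{cor:cumulants} yields $A_2 = -\sigma_n^2$, whence $C_2 = -\sigma_n^2 - E_2$ with $|E_2| = O_\ell(T^*_{\ell+1}) \to 0$.

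The main technical obstacle is verifying that each summation exchange is legitimate and that the identity of Lemma~\ref{lem:expFormOfP}, a priori only valid near $z = 1$, extends to the entire unit circle. Both difficulties collapse to the single estimate $T^*_k \leq r_n^{-(k - \ell - 1)} T^*_{\ell + 1}$, which is the mechanism by which the hypothesis $T^*_{\ell+1} \to 0$ propagates uniformly through the tails of every series in the argument.
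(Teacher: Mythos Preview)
Your proposal is correct and follows essentially the same route as the paper: both use $T_{\ell+1}^* \to 0$ to force all roots outside the unit disk, apply Lemma~\ref{lem:expFormOfP} with $R=0$ and $S_k\equiv 0$, split the $k$-sum at $\ell$, and control the resulting tail $h_n(\theta) = -\sum_{k > \ell} T_k(e^{ik\theta}-1-ik\theta)/k$ and its first two derivatives via the geometric estimate $|T_k| \leq r_n^{-(k-\ell-1)}T_{\ell+1}^*$. The only difference is cosmetic---you Taylor expand into the $A_m$-series first and then split each $A_m$, whereas the paper splits the $k$-sum first and then Taylor expands the finite main piece---but both arrive at the same tail function and the same bounds.
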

	\begin{proof} Note that $T_{\ell+1}^* \to 0$ implies that $\alpha= \min_{\lambda} |\lambda| \to \infty$.  This gives the bound $|T_k| \leq \alpha^{-(k - \ell - 1)} T_{\ell + 1}^*$ for $k \geq \ell + 1$.
		We write $P_n$ exponentially by applying Lemma \ref{lem:expFormOfP} and using the identity $\mu_n = - \sum_{k \geq 1} T_k$. We obtain \begin{align}
		P_n(e^{i\theta})e^{-i \theta \mu_n} &= \exp\left(-\sum_{k \geq 1} \frac{T_k(e^{i\theta k} - 1 - i\theta k)}{k} \right) \\
		&= \exp\left(-\sum_{k =1}^{\ell} \frac{T_k(e^{i\theta k} - 1 - i \theta k)}{k} - \sum_{k = \ell + 1}^\infty \frac{T_k(e^{i\theta k} - 1 - i\theta k)}{k} \right) \\
		&= \exp\left(-\sum_{r = 2}^\infty \frac{C_k (i\theta)^k}{k!} + h_n(\theta) \right) \label{equ:expformTltozero}
		\end{align}
		where $h_n(\theta)$ is defined to be the tail sum in the line above.  Bound 
		\begin{align*}
		|h_n(\theta)| &\leq \sum_{k = \ell + 1}^\infty |T_k|\frac{|e^{i\theta k} - 1 - i\theta k|}{k} \\
		 &\leq |T_{\ell+1}^*| \sum_{k = \ell + 1}^\infty |\alpha|^{-(k - \ell - 1)} |e^{i\theta k}|  + |T_{\ell+1}^*|(1 + |\theta|) \sum_{k = \ell + 1}^\infty |\alpha|^{-(k - \ell - 1)} \\
		&\leq |e^{i\theta(\ell+1)}| \cdot |T_{\ell + 1}^*| \frac{1}{1 - | e^{i\theta} / \alpha|} + |T_{\ell+1}^*| (1 + |\theta|)\frac{1}{1 - |\alpha|}
		\end{align*}
		for $|e^{i\theta} / \alpha | < 1.$  Utilizing $|T_{\ell+1}^*| \to 0$ and $\alpha \to \infty$ shows that this tail sum converges to zero for each fixed $\theta$.  The first two derivatives of $h_n$ are bounded by $$|h_n'(\theta)| \leq \sum_{k = \ell+1}^\infty |T_k| \cdot |e^{i\theta k} - \theta|\qquad \mathrm{and}\qquad |h_n''(\theta)| \leq \sum_{k = \ell+1}^\infty |T_k| \cdot |k e^{i\theta k}|\,.$$
	
		Both sums can be shown to converge to zero for each $\theta$ by bounding $|T_k| \leq \alpha^{-(k - \ell - 1)}T_{\ell + 1}^*$ and summing the remaining series. To show that $\sigma_n^2 = - C_2 + o(1)$, we need only to take logarithms of both sides at (\ref{equ:expformTltozero}) and then use the definition of the cumulants and the fact that the variance of a random variable is its second cumulant.   
  \end{proof}
\vspace{2mm}	

At this point, the important observation is that the rate of growth of the large cumulants is ``linearly'' determined by the rate of growth of the terms $T_1,\ldots,T_{\ell}$. This will ultimately tells us that it is impossible for the cumulant sequence to grow fast enough to stop $\phi(\t/\s_n)$ from tending to a function of the form $e^{Q(\t)}$, where $Q$ is a polynomial. The following lemma captures the dependence between the growth of the cumulants and the growth of $T_1,\ldots,T_{\ell}$.
	
\begin{lemma} \label{lem:domTerm} Let $x_1,x_2,\ldots \in \mathbb{R}^{\ell}$ and let $y_1,y_2,\ldots \in \mathbb{R}^{\ell}$ be such that every set of $\ell$ consecutive vectors $y_{i+1},\ldots y_{i+\ell}$ are linearly independent and the $\{x_i\}$ are non-zero infinitely often. Then there exists an infinite set of integers $S \subseteq\mathbb{N}$ and a sequence of real numbers $\{B(m)\}_m$ so that the following hold:
\begin{enumerate}
\item \label{lem:domterm1}$\lim_{n \in S} \frac{1}{\|x_n\|}\left| \langle x_n,y_m \rangle - \|x_n\|\cdot B(m)\right| = 0$;
\item  \label{lem:domterm2} $B(m)$ is non-zero for infinitely many values of $m$;
\item  \label{lem:domterm3} $|B(m)| \leq \|y_m\|$.
\end{enumerate}
\end{lemma}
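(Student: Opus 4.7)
The plan is to extract the sequence $S$ by a compactness argument applied to the normalized vectors $\hat{x}_n := x_n/\|x_n\|$, and then to define $B(m)$ as the inner product with the limit direction. Concretely, let $T = \{n : x_n \neq 0\}$; by hypothesis $T$ is infinite, so the unit vectors $\{\hat{x}_n : n \in T\}$ lie in the compact sphere $S^{\ell-1} \subseteq \mathbb{R}^\ell$. By Bolzano--Weierstrass, extract an infinite subsequence $S \subseteq T$ along which $\hat{x}_n$ converges to some unit vector $\hat{x} \in S^{\ell-1}$. Define $B(m) := \langle \hat{x}, y_m\rangle$.

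For property \eqref{lem:domterm3}, Cauchy--Schwarz immediately gives $|B(m)| = |\langle \hat{x}, y_m\rangle| \leq \|\hat{x}\|\cdot \|y_m\| = \|y_m\|$. For property \eqref{lem:domterm1}, I factor out $\|x_n\|$ to rewrite the quantity inside the limit as
\[
\frac{1}{\|x_n\|}\bigl|\langle x_n, y_m\rangle - \|x_n\|\cdot B(m)\bigr| \;=\; \bigl|\langle \hat{x}_n - \hat{x}, y_m\rangle\bigr| \;\leq\; \|\hat{x}_n - \hat{x}\|\cdot \|y_m\|,
\]
which, for each fixed $m$, tends to $0$ as $n \to \infty$ along $S$ by construction of $S$.

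The only step requiring the linear-independence hypothesis is property \eqref{lem:domterm2}. Suppose for contradiction that $B(m) = 0$ for all but finitely many $m$; then there exists $N$ with $\langle \hat{x}, y_m\rangle = 0$ for all $m > N$. In particular, $\hat{x}$ is orthogonal to the $\ell$ consecutive vectors $y_{N+1}, \ldots, y_{N+\ell}$, which by hypothesis are linearly independent and therefore span $\mathbb{R}^\ell$. Hence $\hat{x} = 0$, contradicting $\|\hat{x}\| = 1$. So $B(m)$ must be nonzero for infinitely many $m$.

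There is no real obstacle: the argument is essentially compactness of the sphere plus the observation that no nonzero vector can be orthogonal to a spanning set. The one minor point to keep straight is that $S$ must be chosen inside $T = \{n : x_n \neq 0\}$ so that the normalizations $\hat{x}_n$ are defined; this is harmless since $T$ is already infinite.
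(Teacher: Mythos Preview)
Your proof is correct and essentially identical to the paper's: both extract a convergent subsequence of the normalized vectors $x_n/\|x_n\|$ by compactness, set $B(m)=\langle \hat{x},y_m\rangle$, and use the linear-independence hypothesis to rule out $\hat{x}$ being orthogonal to a block of $\ell$ consecutive $y$'s. Your contradiction phrasing of Item~\eqref{lem:domterm2} is in fact slightly cleaner than the paper's matrix formulation, but the content is the same.
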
	
\begin{proof}
Let $S \subseteq \mathbb{N}$ be an infinite set for which $\|x_n\| \neq 0$ for $n\in S$ and for which the limit $x_n/\|x_n\|$ 
converges to some non-zero vector $x \in \mathbb{R}^{\ell}$. In what follows, let us assume that we are only considering $n \in S$. Now note that there exists a sequence $c_n \in \mathbb{R}^{\ell}$ so that $x_n = x\|x_n\| + c_n$, where $c_n = o(\|x_n\|)$. 

Let $A_t$ be the $\ell \times \ell$ matrix with rows $y_{t + 1},\ldots,y_{t+ \ell}$; note that the linear map defined by $x \rightarrow A_tx$ has trivial kernel, as the vectors $y_{t+1},\ldots,y_{t+ \ell}$ are linearly independent, by assumption. We express
\begin{equation} \label{equ:innerproducts} A_tx_n = \|x_n\|(A_tx) + A_tc_n \end{equation} and note that $Ax$ is non-zero and $Ac_n = o(|x_n|)$. 

We now choose $B(m) = (A_{m-1}x)_1$ and note that $B(m)$ is non-zero for infinitely many values, as $A_tx$ is non-zero, for all $t$. This proves Item~\ref{lem:domterm2}. We also have that

\[|B(m)| = |(A_{m-1}x)_1| = |\langle x,y_m \rangle| \leq \|x\|\|y_m\| \leq \|y_m\|, \]
thus proving Item~\ref{lem:domterm3} in the lemma. \end{proof}	

\vspace{5mm}

\noindent We now combine these ingredients to prove Theorem~\ref{thm:TlToZero}.

\vspace{5mm}

\noindent\emph{Proof of Theorem~\ref{thm:TlToZero} : }
For $\ell \in \mathbb{N}$, let $X_n \in \{0,\ldots,n\}$ be a sequence of random variables for which $T^*_{\ell+1}$ converges to zero.  To show convergence to a random variable $Z \sim \N(0,1)$, it is sufficient to prove that every subsequence has a \emph{further} subsequence that converges to $\N(0,1)$, as we did in the proof of Lemma~\ref{pr:high-cumulants-enough}.
	
	We start by writing $P_n(e^{i\theta})$ in an exponential form; Lemma \ref{lem:expFormUsingTl} gives
	\begin{equation} \label{equ:charfun} P_n(e^{i\theta})e^{-i\theta \mu_n} =  \exp\left( \sum_{k=2}^\infty \frac{C_k(i\t)^k}{k!}  + h_n(\t)   \right) 
	\end{equation} where $C_k = T_1 + 2^{k-1}T_2 + \cdots + \ell^{k-1}T_{\ell}$. Here we have suppressed the explicit dependence on $n$ by writing $T_i = T_i(n)$.
	
 	Set $x_n = (T_1(n),\ldots,T_{\ell}(n))$ and put $y_m = (1^{m-1},2^{m-1},3^{m-1},\ldots,\ell^{m-1})$. Since $\s_n \rightarrow \infty$,
	and $\s_n^2(1+o(1)) = |C_2(n)| = |\langle x_n,y_1 \rangle| \leq \|x_n\|\|y_1\|$, we have that $\|x_n\| \rightarrow \infty$ and, in particular, $x_n$ is non-zero for infinitely many values. Also observe for every $t$, the $\ell$ consecutive vectors $y_{t+1},\ldots,y_{t+\ell}$ are linearly independent: simply consider the matrix with rows $y_{t+1},\ldots,y_{t+\ell}$; then the columns of this matrix are $v_1 = (1,\ldots,1), v_2 = 2^t(1,2,2^2,\ldots,2^{\ell-1}),\ldots, 
	v_t = \ell^t(1,\ell,\ell^2,\ldots,\ell^{\ell-1})$ which are linearly independent as $(1,\ldots,1),(1,2,2^2,\ldots,2^{\ell-1}),\ldots,(1,\ell,\ell^2,\ldots,\ell^{\ell-1})$ are the columns of a Vandermonde matrix of full rank. 
	 
	 With the conditions satisfied, we apply Lemma~\ref{lem:domTerm} to our $\{x_n\},\{y_m\}$  
	 to find an infinite subsequence of integers $S$ and a sequence of real numbers $\{B(m)\}$, so that 
	\begin{equation}\label{equ:Ck} |C_k - \|x_n\|\cdot B(k)| = \left| \langle x_n,y_k \rangle - \|x_n\|\cdot B(k)\right| = o(\|x_n\|) ,\end{equation}
	as $n \in S$ and tends to infinity, and $B(k)$ is non-zero for infinitely many values. Since we will only be considering $n \in S$ in what follows, we suppress the explicit dependence on $S$. 
	
	Now, let $k_0 \geq 2$ be the smallest value for which $B(k_0) \not= 0 $ and, anticipating an application of Lemma~\ref{pr:high-cumulants-enough}, we define 
	\begin{equation} \label{equ:bnChoice}
	 b_n = \max \{ |C_2(n)|^{1/2}, |C_3(n)|^{1/3},\ldots, |C_{k_0}(n)|^{1/k_0}\} 
	\end{equation} and split the sum at (\ref{equ:charfun}) according to $k_0$. That is, we set 
	\[ Q_n(\t) = \sum_{k=2}^{k_0} \frac{C_k(i\t/b_n)^k}{k!}, \]
	and put
	\[ g_n(\t) = \sum_{k > k_0} \frac{C_k(i\t/b_n)^k}{k!} .\]	
The following is trivial from the definition.
\begin{claim} $Q_n(\t)$ is a sequence of polynomials with bounded degree and height for which no subsequence converges to the $0$ polynomial.
\end{claim}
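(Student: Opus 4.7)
The plan is to verify the three parts of the claim directly from the definition of $Q_n$ and $b_n$, none of which is deep. First, the degree bound is immediate: by construction $Q_n$ is a polynomial of degree at most $k_0$, and $k_0$ is a fixed integer (determined by the sequence $\{B(m)\}$, not by $n$). Second, the height bound is a consequence of the very definition of $b_n$. The coefficient of $\t^k$ in $Q_n$ is $C_k(n) i^k / (k! \, b_n^k)$, and since $b_n = \max_{2 \leq k \leq k_0} |C_k(n)|^{1/k}$, we have $b_n^k \geq |C_k(n)|$ for each $k \in \{2,\ldots,k_0\}$. Hence each coefficient has absolute value at most $1/k! \leq 1/2$, uniformly in $n$.

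For the third and most substantive part, I would argue by contradiction: suppose some subsequence $\{n_j\}$ has $Q_{n_j} \to 0$. By the definition of $b_n$ as a maximum, for every $n$ there exists at least one index $k^*(n) \in \{2,\ldots,k_0\}$ realising this maximum, that is satisfying $b_n = |C_{k^*(n)}(n)|^{1/k^*(n)}$. The index $k^*(n)$ lives in a finite set, so by pigeonhole we may pass to a further subsequence (still called $\{n_j\}$) on which $k^*$ takes a constant value, say $k^* = k$. Along this subsequence the coefficient of $\t^k$ in $Q_{n_j}$ has absolute value exactly $1/k!$, contradicting convergence to the zero polynomial.

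There really is no genuine obstacle: the point of the construction of $b_n$ in \eqref{equ:bnChoice} was precisely to normalise the low-order cumulant terms so that they neither blow up nor all collapse simultaneously, and the pigeonhole argument above is merely a way of converting the ``max is attained somewhere'' structure into a uniform lower bound along a subsequence. The one small thing to be careful about is that $k_0$ (and hence the size of the index set over which one applies pigeonhole) is defined \emph{independently of $n$} via the sequence $\{B(m)\}$ guaranteed by Lemma~\ref{lem:domTerm}, so the pigeonhole step is legitimate.
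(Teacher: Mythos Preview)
Your proof is correct and matches the paper's approach exactly: the paper simply declares the claim ``trivial from the definition'' without further argument, and what you have written is precisely the spelling-out of that triviality. The only implicit point you might add for completeness is that $b_n > 0$ for all large $n$ (so that dividing by $b_n^k$ is legitimate), which follows from $|C_2(n)| = (1+o(1))\sigma_n^2 \to \infty$.
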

In the remainder of the proof, we only need to control the higher order terms. That is, we show the following.
\begin{claim} We have that $g_n^{(i)}(\t) = o_n(1)$ for all $\t \in \mathbb{R}$ and $i \in \mathbb{N}$. \end{claim}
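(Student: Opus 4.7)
The plan is to bound $g_n^{(j)}(\theta)$ directly by estimating each coefficient coarsely as $|C_k|=|\langle x_n,y_k\rangle|\leq\|x_n\|\cdot\|y_k\|\leq\sqrt{\ell}\,\ell^{k-1}\|x_n\|$ and then exploiting two facts about $b_n$: that $b_n\to\infty$, and that the choice of $k_0$ forces $b_n^{k_0}\gtrsim\|x_n\|$. The second inequality is the heart of the argument: it says that every denominator factor beyond $b_n^{k_0}$ produces genuine decay, which is exactly what is needed to kill the tail.

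First I would verify $b_n\to\infty$: by Lemma~\ref{lem:expFormUsingTl}, $|C_2|=\sigma_n^2+o(1)$, so $b_n\geq|C_2|^{1/2}\to\infty$ since $\sigma_n\to\infty$ by hypothesis. Next, because $B(k_0)\neq 0$ by the very definition of $k_0$, item~\ref{lem:domterm1} of Lemma~\ref{lem:domTerm} gives
\[|C_{k_0}|\;\geq\;\|x_n\|\cdot|B(k_0)|\;-\;o(\|x_n\|)\;\geq\;\tfrac{1}{2}|B(k_0)|\,\|x_n\|\]
for all sufficiently large $n\in S$; combined with $b_n^{k_0}\geq|C_{k_0}|$, this yields $\|x_n\|/b_n^{k_0}=O(1)$. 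Now fix $\theta\in\mathbb{R}$ and $j\in\mathbb{N}$ and restrict to $n$ so large that $b_n\geq 2\ell|\theta|$. Differentiating term-by-term (justified by absolute convergence once $b_n>\ell|\theta|$) and applying the coefficient bound gives
\[|g_n^{(j)}(\theta)|\;\leq\;\sum_{k>k_0,\,k\geq j}\frac{\sqrt{\ell}\,\ell^{k-1}\|x_n\|\,|\theta|^{k-j}}{(k-j)!\,b_n^k}\,.\]
For $b_n\geq 2\ell|\theta|$, successive summands shrink by a factor of at most $1/2$, so the entire sum is bounded by a $(\theta,j,\ell)$-dependent constant times its leading term, which is easily seen to be a constant multiple of $\|x_n\|/b_n^{\max(k_0+1,\,j)}$. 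Since $\max(k_0+1,j)\geq k_0+1$, this quantity equals $\|x_n\|/b_n^{k_0}$ divided by $b_n^{\max(1,\,j-k_0)}$; the first factor is $O(1)$ and the second tends to $0$, so $g_n^{(j)}(\theta)=o(1)$.

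The only conceptual step above is the inequality $b_n^{k_0}\gtrsim\|x_n\|$, which is where the choice of $k_0$ as the smallest index with $B(k_0)\neq 0$ is actually used; everything else is bookkeeping. A minor subtlety is the split between $j\leq k_0$ (where the smallest surviving $k$ is $k_0+1$) and $j>k_0$ (where it is $j$), but in both cases one picks up at least one extra factor of $b_n$ in the denominator beyond $b_n^{k_0}$, and this extra factor, together with $b_n\to\infty$, is precisely what drives the bound to zero.
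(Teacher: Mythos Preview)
Your proof is correct and follows essentially the same route as the paper: both hinge on the coarse bound $|C_k|\lesssim\ell^{k}\|x_n\|$ together with the key inequality $b_n^{k_0}\gtrsim\|x_n\|$ (coming from $B(k_0)\neq 0$) to show the tail sum is $o(1)$. The only cosmetic difference is that the paper bounds $g_n$ uniformly on a complex neighbourhood of $\theta$ and then invokes analyticity to conclude that all derivatives vanish in the limit, whereas you differentiate the power series term-by-term and bound each derivative directly.
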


\noindent\emph{Proof of Claim : }
	We start by observing that $|C_k|$ can be bounded using the Cauchy-Schwarz inequality,
	 \begin{equation} |C_k| = \langle x_n,y_k \rangle \leq \|x_n\|\|y_k\| \leq \ell^k \|x_n\|.  \label{equ:CkUpperBound} \end{equation}
	 From the choice of $b_n$ at (\ref{equ:bnChoice}) and equation~\eqref{equ:Ck}, we see that  
	\begin{equation}\label{equ:bnLowerbound} b_n \geq |C_{k_0} |^{1/k_0} =  (1+o(1))(\|x_n\|\cdot |B(k_0)|)^{1/k_0}\, \end{equation}	
	and so from lines (\ref{equ:CkUpperBound}) and (\ref{equ:bnLowerbound}) we have 
\[ \left|C_k b_n^{-k} \right| \leq \frac{\ell^{k} \|x_n\|(1+o_n(1))}{|B(k_0)|\|x_n\|^{k/k_0}} \, .
	\] Applying this estimate, we see that
	$$|g_n(\theta)| = \left|\sum_{k > k_0}^\infty \frac{C_k (i\theta)^k}{b_n^k k!}\right| \leq \sum_{k > k_0} \frac{(1+o_n(1))\ell^{k}|\t|^k}{k!|B(k_0)|\|x_n\|^{k/k_0-1}},$$ and that the right-hand-side of this inequality tends to zero as $n \rightarrow \infty$, due to the fact $\|x_n\| \rightarrow \infty$.
 Thus, $g_n$ is analytic and tends uniformly to zero in a neighbourhood of $\t$. It follows that $g_n^{(i)}(\t) = o_n(1)$, for all $i \in \mathbb{N}$. This completes the proof of the claim. \qed
\vspace{5mm}

\noindent We now finish the proof of Theorem~\ref{thm:TlToZero} by appealing to Lemma~\ref{pr:high-cumulants-enough}. \qed
	
	\section{Roots Avoiding a Forbidden Region}
	Let $P(z) = \EE[z^X]$ for some $X$ taking finitely many values in $\Z^{\geq 0}$.  Then since $P$ has real coefficients, we may factor $P$ into conjugate pairs
	$$P(z) = \prod_r \left(\frac{z + r}{1 + r}\right)\prod_\zeta\left( \frac{(z - \zeta)(z + \overline{\zeta})}{|1 + \zeta|^2}\right), $$
	where the first product is over the real roots and the second is over the non-real roots in the upper half plane. Note that each $r$ must be non-negative, as the coefficients of $P$ are non-negative.  For $\zeta \in \C\setminus \R$, define $P_\zeta$ and $p_i = p_i(\zeta)$ by 
	\begin{equation} \label{equ:Pzdef} P_\zeta(z) = \frac{(z - \zeta)(z + \overline{\zeta})}{|1 + \zeta|^2} = p_2 z^2 + p_1 z + p_0 .\end{equation}
	
	In the case of $\zeta = -r\in \R^{\leq 0}$, define $P_\zeta = \frac{z + r}{1 + r}$ and $p_0 = r/(1+r)$ and $p_1 = 1/(1+r)$, $p_2 = 0$, parallel to the above.  For each $\zeta \in \C\setminus \R^{>0}$, note that $P_\zeta(1) = 1$ and that $P$ is the product of the $P_\zeta$. We now proceed as if the $P_{\z}$ are probability generating functions. We define $\mu = \mu(\zeta) = 2 p_2 + p_1$ and for each $k \geq 0$ define $m_k = m_k(\zeta) = p_2(2 - \mu)^k + p_1(1 - \mu)^k + p_0(-\mu)^k$ and note that $m_0 = 1$ and $m_1 = 0$.  In the case where $P_\zeta(z) = \EE[z^X]$ for some $X$, note that $\mu = \EE[X]$ and $m_k = \EE[(X-\mu)^k]$.  Additionally, direct calculation shows that for all $\zeta \in \C\setminus \R^{>0}$ we have $P_{1/\zeta}(z) = z^2 P_{\zeta}(1/z)$, $\mu(\zeta) = 2 - \mu(1/\zeta)$, and $|m_k(\zeta)| = |m_k(1/\zeta)|$ for each $k$. 
	
The key property is that these ``central moments'' $m_k$ behave just as if the $P_{\z}$ are probability generating functions of independent random variables.

	\begin{lemma} \label{lem:mgf}
		Let $\zeta \in \C \setminus \R^{>0}$.  Then for any $\theta \in \C$ we have the expansion $$P_\zeta(e^{\theta})e^{-\theta \mu} = \sum_{k = 0}^\infty m_k \frac{\theta^k}{k!}\,.$$
	\end{lemma}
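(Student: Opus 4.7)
The proof is essentially a direct computation. The plan is to expand $P_\zeta(e^\theta)e^{-\theta\mu}$ as a finite sum of exponentials in $\theta$, and then Taylor expand each exponential, identifying the resulting coefficients directly with the $m_k$.

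First I would write, for $\zeta \in \mathbb{C}\setminus\mathbb{R}$,
\[ P_\zeta(e^\theta)\, e^{-\theta\mu} \;=\; (p_2 e^{2\theta} + p_1 e^\theta + p_0)\, e^{-\theta\mu} \;=\; p_2 e^{\theta(2-\mu)} + p_1 e^{\theta(1-\mu)} + p_0 e^{-\theta\mu}, \]
using only the definition of $P_\zeta$ from $(\ref{equ:Pzdef})$. The case $\zeta = -r \in \mathbb{R}^{\leq 0}$ is the same computation with $p_2 = 0$.

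Next I would expand each of the three exponentials as a Taylor series, which is valid for all $\theta \in \mathbb{C}$ since $e^w$ is entire. Because we have only a finite (three-term) sum of absolutely convergent series, we may freely interchange the finite sum with the series, giving
\[ P_\zeta(e^\theta)\, e^{-\theta\mu} \;=\; \sum_{k=0}^\infty \frac{\theta^k}{k!} \Bigl(p_2(2-\mu)^k + p_1(1-\mu)^k + p_0(-\mu)^k\Bigr). \]

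Finally, the bracketed expression is exactly the definition of $m_k = m_k(\zeta)$, so the right-hand side equals $\sum_{k\geq 0} m_k \theta^k / k!$, which is what we wanted. There is no real obstacle here: the only content beyond bookkeeping is the algebraic identity $e^\theta \cdot e^{-\theta\mu} = e^{\theta(1-\mu)}$ (and its analogues for the other two monomials), which is what allows the constants $\{j - \mu\}_{j=0,1,2}$ to appear as the arguments of the exponentials and thus produce the ``centered'' quantities $m_k$.
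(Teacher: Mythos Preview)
Your proof is correct and follows essentially the same approach as the paper: expand $P_\zeta(e^\theta)e^{-\theta\mu}$ as a finite linear combination of exponentials $p_j e^{\theta(j-\mu)}$, Taylor expand each, and identify the coefficient of $\theta^k/k!$ with the definition of $m_k$.
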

	\begin{proof}
	Simply write 
	\[ P(e^\t)e^{-\mu\t} = p_2e^{\t(2-\mu)} + p_1e^{\t(1-\mu)} + p_0e^{-\mu} = \sum_{k\geq 0} \frac{\t^k}{k!}\left(p_2 (2-\mu)^k + p_1(1-\mu)^k + p_0\mu^k\right),\] where we have used the Talyor expansion of $e^{\t}$.
	\end{proof}
	
	Writing $P$ as a product over the $P_\zeta$'s will give an expansion for the moments of $X_n$ in terms of the $m_k$'s; so to show that our random variable is normal, we only need to control the $m_k$'s. The following lemma gives us this control. It is also the point in the proof where we make use of the fact that the roots $\z$ avoid the region $S$. 
	
	\begin{lemma} \label{lem:moments}
		Let $\delta > 0$.  Then for $k \geq 2$ there exists a constant $c_k > 0$ so that for all $\zeta$ satisfying $|1 - \zeta| > \delta$,  $|m_k| \leq c_k$.  Furthermore, if $d(S,\zeta) > \delta$ then there is a constant $c_k'(\delta) > 0 $ such that $|m_k| \leq c_k'(\delta) m_2$. 		
	\end{lemma}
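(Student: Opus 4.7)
\noindent\emph{Proof proposal.} My plan is to reduce both claims to direct estimates on the explicit rational functions $p_0, p_1, p_2$, $\mu$ of the root $\zeta$, combined with a geometric identification of the region $S$ as precisely the locus where the ``variance'' $m_2$ fails to be positive.

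For the first assertion, observe that each of $|p_0|, |p_1|, |p_2|$ and $|\mu|$ is a bounded function of $\zeta$ on $\{|1-\zeta| > \delta\}$: the denominators in their explicit formulas are controlled by $|1-\zeta|^2 \geq \delta^2$, while the numerators remain bounded as $|\zeta| \to \infty$ (in fact $p_0 \to 1$ and $p_1, p_2, \mu \to 0$). Substituting these uniform bounds into $m_k = p_2(2-\mu)^k + p_1(1-\mu)^k + p_0(-\mu)^k$ yields the desired $|m_k| \leq c_k(\delta)$.

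For the second assertion, the crucial step is to compute $m_2$ explicitly. Using $m_0 = 1$ and $m_1 = 0$, a short manipulation gives $m_2 = 4p_2 + p_1 - \mu^2$, and substituting the formulas for the $p_i$ and $\mu$ and simplifying yields
\[
m_2 \;=\; \frac{2\bigl[\,(2-x)(x^2+y^2) - x\,\bigr]}{|1-\zeta|^4}, \qquad \zeta = x + iy.
\]
The defining inequality of $S$, namely $x \geq 2(x^2+y^2)/(1+x^2+y^2)$, rearranges precisely to $(2-x)(x^2+y^2) \leq x$, so $\zeta \notin S$ is equivalent to $m_2 > 0$. In particular the hypothesis $d(S,\zeta) > \delta$ guarantees strict positivity of $m_2$, and the ratio $|m_k|/m_2$ is well-defined on the region of interest.

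To bound this ratio uniformly on $\{d(S,\zeta) \geq \delta\}$ I split the region in two. On any compact piece $\{|\zeta| \leq R\} \cap \{d(S,\zeta) \geq \delta\}$, the ratio is continuous with strictly positive denominator and is therefore bounded by compactness. For the tail $|\zeta| \to \infty$, a leading-order expansion gives $m_k = (2^k - 2x)|\zeta|^{-2}(1+o(1))$ and $m_2 = (4-2x)|\zeta|^{-2}(1+o(1))$, so $m_k/m_2 \to (2^k-2x)/(4-2x)$. Since $\{x \geq 2\} \subset S$, the hypothesis $d(S,\zeta) > \delta$ forces $x \leq 2 - \delta$, whence $4-2x \geq 2\delta$ and the limiting ratio is bounded by a constant of order $2^k/\delta$. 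Combining the two regimes delivers the claimed $c_k'(\delta)$. The main subtlety is precisely this tail analysis: both $m_k$ and $m_2$ tend to zero at infinity, and the hypothesis $d(S,\zeta) > \delta$ is needed to keep $\Re(\zeta)$ away from $2$ and thus prevent the denominator in the limiting ratio from degenerating.
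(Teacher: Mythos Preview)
Your argument is correct and follows essentially the same route as the paper: continuity plus $m_k \to 0$ at infinity for the first bound, an explicit computation identifying $S$ with the locus $\{m_2 \le 0\}$, and then compactness on a bounded piece combined with a tail estimate for $|m_k|/m_2$. The one place the paper is more careful is the tail: since $x = \Re(\zeta)$ varies with $\zeta$ and may tend to $-\infty$, your asymptotic $m_k = (2^k - 2x)\,|\zeta|^{-2}(1+o(1))$ needs its $o(1)$ to be uniform in $x$ (and the ``limit'' $m_k/m_2 \to (2^k-2x)/(4-2x)$ must be read accordingly); the paper secures this by splitting the tail into two explicit sub-regimes, namely $x \in [-2,\,2-\delta]$ with $|\Im\zeta|$ large, and $|\zeta|$ large with $x \le -1$, and bounding each directly rather than relying on a single leading-order expansion.
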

	\begin{proof}
	In the case of $\zeta \in \R^{\leq 0}$, we have that $P_\zeta(z) = \EE[z^X]$ where $X$ is a Bernoulli random variable in which case $|m_k| \leq m_2 \leq 1$.  We now deal with the case when $\zeta \in \C \setminus \R$.
		
Fix $k \geq 2$.  One can see directly from the definition at \eqref{equ:Pzdef} that as  $|\zeta| \to \infty$, we have $p_2, p_1 \to 0$ and $p_0 \to 1$, and therefore $m_k \rightarrow 0 $ as $\z \to \infty$. Now $m_k$ is a continuous function of $\z$, in the region $|z-1| > \delta$, and thus $m_k$ attains a maximum. That is, $|m_k| \leq c_k$, for some constant $c_k$.
		
We now turn to show the second part of the lemma, that $|m_k| < c'_k(\delta)m_2$, provided $d(S,\zeta) > \delta$.  Write $\z = a + ib$. Since $|\z - 1| > \delta$, a direct calculation gives
$$m_k =  \frac{(2(a^2 + b^2) - 2a)^k - 2a(a^2 + b^2 - 1)^k + (a^2 + b^2)(2a - 2)^k}{((1 - a)^2 + b^2)^{k+1}}\,.$$

To see that $m_2 \leq 0 $ if and only if $(a+ib) \in S$, put $r = (a^2 + b^2)^{1/2}$ and factor
\[ m_2 = \frac{ -2((a-1)^2 + b^2) (a (r^2 + 1) - 2 r^2)}{((1 - a)^2 + b^2)^{3}} = \frac{-2(a (r^2 + 1) - 2 r^2)}{((1 - a)^2 + b^2)^{2}}. \]

As the denominator is always positive, the sign depends only on the numerator, which is the same expression that appears in the definition of $S$.
		
So write 
$$\frac{m_k}{m_2} = \frac{(2(a^2 + b^2) - 2a)^k - 2a(a^2 + b^2 -1)^k + (a^2 + b^2)(2a - 2)^k}{(1 - 2a + a^2 + b^2)^{k-2}((2a^2 + 2b^2 - 2a)^2 - 2a(a^2 + b^2 - 1)^2 + (a^2 + b^2)(2a - 2)^2)}\,.$$

Since the line $\{2 + ib : b \in \R\}$ is contained in $S$, we have that $a < 2 - \delta$ for all $a + ib$ with $d(S,a+ib) > \delta$.  
We bound $|m_k|/m_2$ in two steps; first for $a \in (-2,2-\delta)$ and $b$ sufficiently large, and then for $|a^2 + b^2|$ sufficiently large and $a < -1$.  Since the remaining set $\{\zeta \in \C: d(S,\zeta) \geq \delta \}$ without these two regions is compact, the continuous function $|m_k|/m_2$ achieves a maximum on it.   

We first bound $|m_k / m_2|$ for $a \in [-2,2-\delta]$ and $b$ large.  Divide the numerator and denominator of the expression for $m_k / m_2$ by $b^{2k}$ and bound the numerator by $(12)^k + 4 + 5\cdot 2^k$ for all $a \in [-2,2]$, $b \geq 1$.  The denominator requires a bit more care.  We have $(1 - 2a + a^2 + b^2)^{k-2} \geq b^{2k - 4}$.  Since $[-2,2-\delta]$ is a compact set, we can find $M_\delta$ so that for all $b > M_\delta$ and $a \in [-2,2-\delta]$ we have $$\left|\frac{(2a^2 + 2b^2 - 2a)^2}{b^4} - 4 \right| + \left|\frac{-2a(a^2 + b^2 - 1)^2}{b^4} - (-2a)  \right| + \left|\frac{(a^2 + b^2)(2a - 2)^2}{b^4} \right| < \delta\,.$$
Then for $a \in [-2,2+\delta]$ and $b > M_\delta$, $$\left|\frac{m_k}{m_2}\right| \leq \frac{12^k + 4 + 5\cdot 2^k}{4a - 2 - \delta} \leq \frac{12^k + 4 + 5\cdot 2^k}{ \delta}\,.$$
We now bound $|m_k / m_2|$ for large $r^2 = a^2 + b^2$ and $a \leq -1$. So if we write $a + ib = r e^{i\theta}$, we bound
 
$$\frac{m_k}{m_2} = \frac{(2r^2- 2r \cos(\theta))^k - 2r\cos(\theta)(r^2 -1)^k + r^2(2r\cos(\theta) - 2)^k}{(1 - 2r\cos(\theta) + r^2)^{k-2}((2r^2 - 2r\cos(\theta))^2 - 2r\cos(\theta)(r^2 - 1)^2 + r^2(2r\cos(\theta) - 2)^2)}\, ,$$
for sufficiently large $r$ and $r \cos(\theta) < - 1$.   Divide the numerator and denominator of the expression of $m_k / m_2$ by $r^{2k + 1}$.  Then for $\cos(\theta) < 0$, the numerator is bounded above by $$\frac{4^k}{r} - 2\cos(\theta) + \frac{1}{r}$$ for $r \geq 4$.  Similarly, the denominator over $r^{2k+1}$ is bounded below by $-2\cos(\theta)(1 - \frac{1}{r^2})^2  \geq -\cos(\theta)$ for $r \geq 4$ and $\cos(\theta) < 0$.  Thus, for $r \geq 4$ and $-r \cos(\theta) > 1$, we have $$\frac{m_k}{m_2} \leq \frac{\frac{4^k}{r} - 2 \cos(\theta) + \frac{1}{r}}{-\cos(\theta)} = \frac{4^k}{-r\cos(\theta)} + 2 + \frac{1}{-r\cos(\theta)} \leq 4^k + 2 + 1\,.$$
Thus, we have that $|\frac{m_k}{m_2}|$ is bounded above for all $r \geq 4$ and $r \cos(\theta) < -1$.
	\end{proof}

	\emph{Proof of Theorem \ref{thm:region} :}  The proof is by the method of moments; Carleman's continuity theorem tells us that convergence $\frac{X_n - \mu_n}{\sigma_n} \to \N(0,1)$ in distribution follows from the convergence of each moment. That is, it is sufficient to show that for each $k$ we have
	$$\EE\left[\left(\frac{X_n - \mu_n}{\sigma_n}\right)^k\right] \xrightarrow{n\to\infty} \begin{cases}
	(k - 1)!! & \text{if }k \text{ is even}; \\
	0 & \text{if }k \text{ is odd },
	\end{cases}\,$$
	since the right-hand-side is the $k$th moment of a standard normal.  Define $M_k$ to be the $k$th moment of $\frac{X_n - \mu_n}{\sigma_n}$, i.e. the left-hand-side of the above equation.  Since each $X_n$ is bounded, the expansion $$\EE\left[\exp\left(i\theta(X_n - \mu_n)/\sigma_n\right)\right] = P(e^{i\theta / \sigma_n})e^{-i\theta\mu_n / \sigma_n} = \sum_{k = 0}^\infty \frac{M_k (i\theta)^k}{k!}\,$$
	converges in a neighborhood of $\theta = 0$.
	
	By Lemma \ref{lem:mgf} and noting that $\mu_n = \sum \mu(\zeta)$, we have
	\begin{equation} \label{eq:moment-expansion}
	P(e^{i\theta/\sigma_n})e^{-i\theta \mu_n / \sigma_n} =\prod_\zeta P_\zeta(e^{i\theta/\sigma_n})
	e^{-i\theta \mu(\zeta) / \sigma_n}= \prod_\zeta \left(\sum_{k = 0}^\infty \frac{m_k(\zeta)(i\theta)^k}{\sigma_n^k k!} \right)\,.\end{equation}
	
	Since $m_1(\zeta) = 0$ for each $\zeta$, this means that $\sum_\zeta m_2(\zeta) = \sigma_n^2$.  Expanding this product and equating coefficients---i.e. using the identity theorem---gives
	
	\begin{align}M_k &= \frac{k!}{\sigma_n^k}\sum_{j_1 + \cdots + j_l = k} \sum_{i_1 < i_2 < \ldots < i_l} \frac{m_{j_1}(\zeta_{i_1})  m_{j_2}(\zeta_{i_2} ) \cdots m_{j_l}(\zeta_{i_l}) }{j_1! j_2! \cdots j_l!} \nonumber\\
	&= \frac{1}{\sigma_n^k}\sum_{j_1 + \cdots + j_l = k} \binom{k}{j_1,\ldots,j_l}\frac{1}{l!} \sum_{i_1,\ldots,i_l~\mathrm{distinct}} m_{j_1}(\zeta_{i_1})\cdots m_{j_l}(\zeta_{i_l}) \nonumber  \\ 
	&= \frac{1}{\sigma_n^k}\sum_{j_1 + \cdots + j_l = k} \binom{k}{j_1,\ldots,j_l}\frac{1}{l!} S(j_1,\ldots,j_l) \label{eq:moment-equal}
	\end{align}
	where $S(j_1,\ldots,j_l) = \sum m_{j_1}(\zeta_{i_1})\cdots m_{j_l}(\zeta_{i_l})$ is the innermost sum in the line above. Since $m_1(\zeta) = 0$ for all $\zeta$, it is sufficient to consider only compositions $(j_1,\ldots,j_l)$ so that each part is at least $2$.  
	
	\begin{claim} \label{claim:BoundOnmoments}
		Let $j_1 + \ldots + j_l = k$ where $j_1,\ldots,j_l \geq 2$ and $j_i > 2$, for some $i\in [l]$.  Then $S(j_1,\ldots,j_l) = o(\sigma_n^k)$. 
	\end{claim}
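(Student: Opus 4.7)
The plan is to split the sum $S(j_1,\ldots,j_l)$ according to which of the roots $\zeta_{i_s}$ lie close to the forbidden region $S$ (``bad'') and which lie far from it (``good''), and then apply the two bounds of Lemma \ref{lem:moments} to each type. Call a root $\zeta$ of $P_n$ \emph{good} if $d(\zeta, S) > \delta$ and \emph{bad} otherwise; by hypothesis, there are at most $B := N_n(\delta) = o(\sigma_n^3)$ bad roots. I would decompose
\[
S(j_1,\ldots,j_l) \;=\; \sum_{I \subseteq [l]}\; \sum_{\substack{(i_1,\ldots,i_l)\ \text{distinct}\\ \zeta_{i_s}\ \text{bad iff}\ s\in I}} \prod_{s=1}^l m_{j_s}(\zeta_{i_s}),
\]
and for each $I$ apply the uniform bound $|m_{j_s}(\zeta_{i_s})| \leq c_{j_s}$ from Lemma \ref{lem:moments} when $s \in I$, and the sharper bound $|m_{j_s}(\zeta_{i_s})| \leq c'_{j_s}(\delta)\, m_2(\zeta_{i_s})$ when $s \notin I$.

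Dropping the distinctness constraint as an upper bound then yields
\[
|S(j_1,\ldots,j_l)| \;\leq\; C(j_1,\ldots,j_l) \sum_{I \subseteq [l]} B^{|I|}\Bigl(\sum_{\text{good } \zeta} m_2(\zeta)\Bigr)^{l - |I|}.
\]
The identity $\sum_{\zeta} m_2(\zeta) = \sigma_n^2$ combined with $|m_2(\zeta)| \leq c_2$ on every $\zeta$ with $|1 - \zeta| > \delta$ gives $\sum_{\text{good}} m_2(\zeta) \leq \sigma_n^2 + c_2 B$. The hypothesis that each $j_s \geq 2$ and some $j_i > 2$ forces $k \geq 2l + 1$. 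Consequently, the $|I| = 0$ contribution is $O(\sigma_n^{2l}) = o(\sigma_n^{2l+1}) \leq o(\sigma_n^k)$, while the $|I| = 1$ contribution is $O(B\,\sigma_n^{2l-2}) = o(\sigma_n^{3}\sigma_n^{2l-2}) = o(\sigma_n^{2l+1}) \leq o(\sigma_n^k)$.

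The main obstacle, as I see it, is controlling the terms with $|I| \geq 2$ in the tight regime $k = 2l + 1$ (which forces the partition $(3,2,\ldots,2)$). The crude bound $B^{|I|}\sigma_n^{2(l-|I|)} = o(\sigma_n^{2l+|I|})$ then exceeds the target $\sigma_n^{k} = \sigma_n^{2l+1}$ as soon as $|I| \geq 2$. To close this gap I would refine the inner sum over good indices using Newton's identities: write $\sum_{\text{distinct}} \prod_s m_2(\zeta_{i_s})$ in terms of the elementary symmetric functions, which in turn are polynomials in the power sums $p_r = \sum_\zeta m_2(\zeta)^r$. For $r \geq 2$ we have $|p_r| \leq c_2^{r-1}\sum_\zeta |m_2(\zeta)| = O(\sigma_n^2 + B) = o(\sigma_n^3)$, so all contributions except the leading $p_1^{l-|I|} = \sigma_n^{2(l-|I|)}$ term shed an additional factor of $\sigma_n$, which shows that the $|I| \geq 2$ terms in fact enter $S(j_1,\ldots,j_l)$ only through sub-leading corrections and are $o(\sigma_n^k)$, completing the proof.
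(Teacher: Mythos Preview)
Your argument has a genuine gap at the step where you claim the $|I|=0$ contribution is $O(\sigma_n^{2l})$. Your own bound is $\bigl(\sum_{\text{good }\zeta} m_2(\zeta)\bigr)^{l}$, and you only establish $\sum_{\text{good}} m_2(\zeta)\le \sigma_n^2 + c_2 B$. Since the hypothesis of Theorem~\ref{thm:region} is merely $B=N_n(\delta)=o(\sigma_n^3)$, it is entirely possible that $B\gg \sigma_n^2$ (say $B\sim \sigma_n^{5/2}$), in which case $\sum_{\text{good}} m_2(\zeta)$ can be of order $\sigma_n^{5/2}$ and $\bigl(\sum_{\text{good}} m_2(\zeta)\bigr)^{l}$ is then of order $\sigma_n^{5l/2}$, which for $l\ge 2$ is \emph{not} $o(\sigma_n^{2l+1})$. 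The same error infects your $|I|=1$ estimate. Your Newton-identity patch at the end does not rescue this: having already restricted the indices $s\notin I$ to good roots, the relevant power sums are $p_r=\sum_{\text{good }\zeta} m_2(\zeta)^r$, so $p_1$ is again $\sum_{\text{good}} m_2(\zeta)$ and not $\sigma_n^2$; writing $p_r=\sum_\zeta m_2(\zeta)^r$ over \emph{all} roots is inconsistent with the decomposition you set up.

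The difficulty is that you took absolute values (via the good/bad split) on the $j_s=2$ slots too early, destroying the cancellation in $\sum_\zeta m_2(\zeta)=\sigma_n^2$. The paper's proof avoids this by first separating the parts equal to $2$ from those exceeding $2$: writing the composition as $(j_1,\ldots,j_r,2,\ldots,2)$ with each $j_i>2$, one peels off the $m_2$ factors one at a time \emph{before} any absolute values, using that for fixed distinct $i_1,\ldots,i_{l-1}$ one has $\sum_{i_l\neq i_1,\ldots,i_{l-1}} m_2(\zeta_{i_l})=\sigma_n^2+O(1)$ (the $O(1)$ bounded by $(l-1)c_2$). Iterating gives $S(j_1,\ldots,j_l)=(\sigma_n^2+O(1))^{l-r}\sum_{i_1,\ldots,i_r\text{ dist.}} m_{j_1}(\zeta_{i_1})\cdots m_{j_r}(\zeta_{i_r})$, and only now does one apply the triangle inequality (and, if you like, your good/bad split) to the remaining factors with $j_i\ge 3$, obtaining $\prod_i\sum_\zeta|m_{j_i}(\zeta)|=\prod_i o(\sigma_n^{j_i})$. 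The total is $o(\sigma_n^{2(l-r)+j_1+\cdots+j_r})=o(\sigma_n^k)$. The moral: reserve the good/bad dichotomy for the $m_j$ with $j>2$, and handle the $m_2$ factors via the signed identity.
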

\noindent	\emph{Proof of Claim :} Define $\NV = \{z \in \C: d(S,z) \leq \delta \}$ and $\PV = \C\setminus \NV$ and recall that $|\NV| = N_n = N_n(\delta)$.  If we write 
$\sigma_n^2 = \sum_{\zeta \in \PV}m_2(\zeta) + \sum_{\zeta \in \NV}m_2(\zeta), $ it follows that 
\[ \sum_{\zeta \in \PV} m_2(\zeta) \leq \sigma_n^2 + c'_2 N_n = o(\sigma_n^3)\, ,\]
from Lemma~\ref{lem:moments} and from the assumption about the number of zeros $\z$ within $\delta$ of our region $S$. From Lemma~\ref{lem:moments} we also obtain an estimate for $j > 2$, 
\begin{equation} \label{equ:boundonSumofmk}
\sum_{\zeta} |m_j(\zeta)| \leq \sum_{\zeta \in \PV} |m_j(\zeta)| + \sum_{\zeta \in \NV} |m_j(\zeta)| \leq c_j(\delta) \sum_{\zeta \in \PV} m_2(\zeta) + c_j'N_n = o(\sigma_n^3) = o(\sigma_n^j)\,.\end{equation}
		
 Since $S(j_1,\ldots,j_l)$ is the same for all permutations of $j_1,\ldots,j_l$, we assume without loss of generality that $j_1,\ldots, j_l$ is of the form $j_1,\ldots, j_r,2,2,\ldots, 2$ with each $j_1,\ldots,j_r > 2$. 
\begin{align}
	S(j_1,\ldots,j_l) &= \sum_{i_1,\ldots,i_l~\mathrm{dist.}} m_{j_1}(\zeta_{i_1}) \cdots m_{j_r}(\zeta_{i_r})m_2(\zeta_{i_{r+1}})\cdots m_2(\zeta_{i_l}) \nonumber \\
	&=\sum_{i_1,\ldots,i_{l-1}~\mathrm{dist.}} m_{j_1}(\zeta_{i_1}) \cdots m_{j_r}(\zeta_{i_r})m_2(\zeta_{i_{r+1}})\cdots m_2(\zeta_{i_{l-1}})  \sum_{i_l \neq i_1,\ldots,i_{l-1}} m_2(\zeta_{i_l}) \nonumber \end{align}
Now $ \sum_{i_l \neq i_1,\ldots,i_{l-1}} m_2(\zeta_{i_l})  = \sum_{\z} m_2(\z) + O(1) = \s_n^2 + O(1)$. Thus, applying this $r-l$ times yields
	\begin{equation}
	 =  \sum_{i_1,\ldots,i_{r}~\mathrm{dist.}} m_{j_1}(\zeta_{i_1}) \cdots m_{j_r}(\zeta_{i_r})(\sigma_n^2 + O(1))^{l-r}\,. \label{eq:Sj-2s-out} \end{equation}
Applying the triangle inequality, we see 
\begin{align*} |S(j_1,\ldots,j_l)| &\leq  (\sigma_n^2 + O(1))^{l-r} \sum_{i_1,\ldots,i_r~\mathrm{dist.}} |m_{j_1}(\zeta_{i_1})| \cdots |m_{j_r}(\zeta_{i_r})| \\
&\leq (\sigma_n^2 + O(1))^{l-r} \prod_{i=1}^r \left( \sum_{\z} |m_{j_i}(\z)| \right) = o(\sigma_n^k),
\end{align*}
where the last equality follows from line~(\ref{equ:boundonSumofmk}). This proves the claim. \qed

\vspace{5mm}

We now apply Claim~\ref{claim:BoundOnmoments} to \eqref{eq:moment-equal} to learn that  $$M_k = \begin{cases}
(k-1)!! \frac{S(2,2,\ldots,2)}{\sigma_n^k} + o(1) &\text{ if }k \text{ is even} ;\\
o(1) &\text{ if }k \text{ is odd}.
\end{cases}\,.$$
	
\noindent	Arguing as in \eqref{eq:Sj-2s-out} shows that $S(2,2,\ldots,2) = (\sigma_n^2 + O(1))^{k/2}$, thereby implying $$\sigma_n^{-k}S(2,2,\ldots,2) \to 1\,.$$ Applying Carleman's continuity theorem (see, for instance, \cite[Theorems $30.1$ and $30.2$]{billingsley}) completes the proof. \qed

\section{Acknowledgements}
We would like to thank Robin Pemantle for bringing to our attention this circle of questions and B\'{e}la Bollob\'{a}s and Robert Morris for comments. The second named author would like to thank Robin Pemantle and the University of Pennsylvania for hosting him while this research was conducted.  

	\bibliographystyle{plain}
	\bibliography{BibRootsCLTs}

\begin{thebibliography}{10}

\bibitem{nonNegFactor}
R.W. Barnard, W.~Dayawansa, K.~Pearce, and D.~Weinberg.
\newblock Polynomials with non-negative coefficents.
\newblock {\em Proc. Amer. Math. Soc.}, 113:77–85, 1991.

\bibitem{BorweinPowerSeries}
F~Beaucoup, P~Borwein, David Boyd, and Christopher Pinner.
\newblock Multiple roots of [-1, 1] power series.
\newblock 57:135--147, 02 1998.

\bibitem{billingsley}
P.~Billingsley.
\newblock {\em Probability and measure}.
\newblock Wiley Series in Probability and Mathematical Statistics. John Wiley
  \& Sons, Inc., New York, third edition, 1995.
\newblock A Wiley-Interscience Publication.

\bibitem{Bilu}
Y.~Bilu.
\newblock Limit distribution of small points on algebraic tori.
\newblock {\em Duke Math J.}, 89(3), 1997.

\bibitem{BlochPolya}
A.~Bloch and G.~P\'{o}lya.
\newblock On the roots of certain algebraic equations.
\newblock {\em Proc. London Math. Soc.}, 33:102--114, 1932.

\bibitem{bbl}
J.~Borcea, P.~Br\"and\'en, and T.~M. Liggett.
\newblock Negative dependence and the geometry of polynomials.
\newblock {\em J. Amer. Math. Soc.}, 22(2):521--567, 2009.

\bibitem{borwein-erdelyi}
P.~Borwein and T.~Erd\'elyi.
\newblock {\em Polynomials and polynomial inequalities}, volume 161 of {\em
  Graduate Texts in Mathematics}.
\newblock Springer-Verlag, New York, 1995.

\bibitem{cauchy1828exercises}
A.~L.~B. Cauchy.
\newblock {\em Exercises de math{\'e}matiques}, volume~3.
\newblock Bure fr{\`e}res, 1828.
\newblock p. 122.

\bibitem{durrett}
R.~Durrett.
\newblock {\em Probability: theory and examples}, volume~31 of {\em Cambridge
  Series in Statistical and Probabilistic Mathematics}.
\newblock Cambridge University Press, Cambridge, fourth edition, 2010.

\bibitem{Erdelyi}
T.~Erd\'{e}lyi.
\newblock On the zeros of polynomials with {L}ittlewood-type coefficient
  constrains.
\newblock {\em Michigan. Math. J.}, 49:97--111, 2001.

\bibitem{ErdosOfford}
P.~Erd\H{o}s and A.~C. Offord.
\newblock On the number of real roots of a random algebraic equation.
\newblock {\em Proceedings of the London Mathematical Society},
  s3-6(1):139--160.

\bibitem{erdos-turan}
P.~Erd\H{o}s and P.~Tur\'an.
\newblock On the distribution of roots of polynomials.
\newblock {\em Ann. of Math. (2)}, 51:105--119, 1950.

\bibitem{eremenko}
A.~Eremenko and A.~Fryntov.
\newblock Remarks on the {O}brechkoff inequality.
\newblock {\em Proc. Amer. Math. Soc.}, 144(2):703--707, 2016.

\bibitem{Ganelius}
T.~Ganelius.
\newblock Sequences of analytic functions and their zeros.
\newblock {\em Ark. Mat.}, 3:1--50, 1954.

\bibitem{GLP}
S.~Ghosh, T.~M. Liggett, and R.~Pemantle.
\newblock Multivariate {CLT} follows from strong {R}ayleigh property.
\newblock In {\em 2017 {P}roceedings of the {F}ourteenth {W}orkshop on
  {A}nalytic {A}lgorithmics and {C}ombinatorics ({ANALCO})}, pages 139--147.
  SIAM, Philadelphia, PA, 2017.

\bibitem{AGran}
A.~Granville.
\newblock The distribution of roots of a polynomial.
\newblock In {\em Equidistribution in Number Theory, an Introduction.}, pages
  93--102, 2007.

\bibitem{hwang-zacharovas}
H-K. Hwang and V.~Zacharovas.
\newblock Limit distribution of the coefficients of polynomials with only unit
  roots.
\newblock {\em Random Structures Algorithms}, 46(4):707--738, 2015.

\bibitem{kac1943}
M.~Kac.
\newblock On the average number of real roots of a random algebraic equation.
\newblock {\em Bull. Amer. Math. Soc.}, 49(4):314--320, 04 1943.

\bibitem{kac-book}
M.~Kac.
\newblock {\em Probability and related topics in physical sciences}, volume
  1957 of {\em With special lectures by G. E. Uhlenbeck, A. R. Hibbs, and B.
  van der Pol. Lectures in Applied Mathematics. Proceedings of the Summer
  Seminar, Boulder, Colo.}
\newblock Interscience Publishers, London-New York, 1959.

\bibitem{LPRS}
J.~L. Lebowitz, B.~Pittel, D.~Ruelle, and E.~R. Speer.
\newblock Central limit theorems, {L}ee-{Y}ang zeros, and graph-counting
  polynomials.
\newblock {\em J. Combin. Theory Ser. A}, 141:147--183, 2016.

\bibitem{LittlewoodOfford}
J.~E. Littlewood and A.~C. Offord.
\newblock On the number of real roots of a random algebraic equation.
\newblock {\em Journal of the London Mathematical Society}, s1-13(4):288--295.

\bibitem{LittlewoodOfford-II}
J.~E. Littlewood and A.~C. Offord.
\newblock On the number of real roots of a random algebraic equation. {II}.
\newblock In {\em Mathematical Proceedings of the Cambridge Philosophical
  Society}, volume~35, pages 133--148. Cambridge University Press, 1939.

\bibitem{LittlewoodOfford-III}
J.~E. Littlewood and A.~C. Offord.
\newblock On the number of real roots of a random algebraic equation. {III}.
\newblock {\em Rec. Math. [Mat. Sbornik] N.S.}, 12(54):277--286, 1943.

\bibitem{lukacs}
E.~Lukacs.
\newblock {\em Characteristic functions}.
\newblock Hafner Publishing Co., New York, 1970.
\newblock Second edition, revised and enlarged.

\bibitem{Marcin}
J.~Marcinkiewicz.
\newblock Sur une propriete de la lot de {G}auss.
\newblock {\em Math. Zeits.}, 44:612--618, 1938.

\bibitem{Mignotte}
M.~Mignotte.
\newblock Remarque sur une question relative \'{a} des fonctions
  conjugu\'{e}es.
\newblock {\em C.R. Acad. Sci. Paris S\'{e}r. I. Math.}, 315(8), 1992.

\bibitem{obrechkoff}
N.~{Obrechkoff}.
\newblock {Sur un probl\`eme de Laguerre.}
\newblock {\em {C. R. Acad. Sci., Paris}}, 177:102--104, 1923.

\bibitem{OP}
A.M. Odlyzko and B.~Poonen.
\newblock Zeros of polynomials with 0,1 coefficients.
\newblock {\em L'Enseign. Math.}, 39:317--348, 1993.

\bibitem{offord}
A.~C. Offord.
\newblock The distribution of the values of an entire function whose
  coefficients are independent random variables.
\newblock {\em Proc. London Math. Soc. (3)}, 14a:199--238, 1965.

\bibitem{pemantle}
R.~Pemantle.
\newblock Personal Communication.

\bibitem{pemantle-survey}
R.~Pemantle.
\newblock Hyperbolicity and stable polynomials in combinatorics and
  probability.
\newblock In {\em Current developments in mathematics, 2011}, pages 57--123.
  Int. Press, Somerville, MA, 2012.

\bibitem{peres-virag}
Y.~Peres and B.~Vir\'ag.
\newblock Zeros of the i.i.d.\ {G}aussian power series: a conformally invariant
  determinantal process.
\newblock {\em Acta Math.}, 194(1):1--35, 2005.

\bibitem{rennie}
B.~C. Rennie and A.~J. Dobson.
\newblock On {S}tirling numbers of the second kind.
\newblock {\em J. Combinatorial Theory}, 7:116--121, 1969.

\bibitem{Schur}
I.~Schur.
\newblock Untersuchungen \"{u}ber algebraische gleichungen.
\newblock {\em Sitz. Preuss. Akad. Wiss., Phys.- Math. Kl.}, pages 403--428,
  1933.

\bibitem{soshnikov}
A.~Soshnikov.
\newblock Gaussian limit for determinantal random point fields.
\newblock {\em Ann. Probab.}, 30(1):171--187, 2002.

\bibitem{Sound}
K.~Soundararajan.
\newblock Equidistribution of roots of polynomials.
\newblock {\em American Math Monthly}.

\bibitem{stanley}
R.~P. Stanley.
\newblock {\em Enumerative combinatorics. {V}olume 1}, volume~49 of {\em
  Cambridge Studies in Advanced Mathematics}.
\newblock Cambridge University Press, Cambridge, second edition, 2012.

\bibitem{szegoExp}
G.~Szeg\H{o}.
\newblock \"{U}ber eine eigenschaft der exponentialreihe.
\newblock {\em Sitzungsber Berliner Math. Gesellschaft}, 23:50--64, 1924.

\bibitem{Szego}
G.~Szeg\H{o}.
\newblock Bemerkungen zu einem satz von {E}. {S}chmidt \"{u}ber algebraische
  gleichungen.
\newblock {\em Sitz. Preuss. Akad. Wiss., Phys.- Math. Kl.}, pages 86--98,
  1934.

\bibitem{zemyan}
S.~M. Zemyan.
\newblock On the zeroes of the {$N$}th partial sum of the exponential series.
\newblock {\em Amer. Math. Monthly}, 112(10):891--909, 2005.

\end{thebibliography}
	
\end{document}